\title{Clique-width and Well-Quasi-Ordering of Triangle-Free Graph Classes\thanks{This research was supported by EPSRC (EP/K025090/1 and EP/L020408/1) and the Leverhulme Trust RPG-2016-258.
An extended abstract of this paper appeared in the proceedings of WG 2017~\cite{DLP}.}}
\author{Konrad K. Dabrowski\inst{1} \and Vadim V. Lozin\inst{2} \and Dani\"el~Paulusma\inst{1}}
\institute{
Department of Computer Science, Durham University,\\
Lower Mountjoy, South Road, Durham DH1 3LE, United Kingdom\\
\texttt{\{konrad.dabrowski,daniel.paulusma\}@durham.ac.uk}
\and
Mathematics Institute, University of Warwick,\\
Coventry CV4 7AL, United Kingdom\\
\texttt{v.lozin@warwick.ac.uk}}
\DeclareMathOperator{\cw}{cw}
\newtheorem{oproblem}{Open Problem}
\newcommand{\NP}{{\sf NP}}
\newcommand{\ssi}{\subseteq_i}
\newcommand{\si}{\supseteq_i}
\newcounter{ctrclaim}[theorem]
\newcommand{\clm}[1]{\medskip\phantomsection\refstepcounter{ctrclaim}\noindent{\em Claim \thectrclaim. }{\em #1}\\}
\newcommand{\clmnonewline}[1]{\medskip\phantomsection\refstepcounter{ctrclaim}\noindent{\em Claim \thectrclaim. }{\em #1}}
\begin{document}
\maketitle
\begin{sloppypar}
\begin{abstract}
Daligault, Rao and Thomass{\'e} asked whether every hereditary graph class that is well-quasi-ordered by the induced subgraph relation has bounded clique-width.
Lozin, Razgon and Zamaraev (JCTB 2017+) gave a negative answer to this question, but their counterexample is a class that can only be characterised by infinitely many forbidden induced subgraphs.
This raises the issue of whether the question has a positive answer for finitely defined hereditary graph classes.
Apart from two stubborn cases, this has been confirmed when at most two induced subgraphs $H_1,H_2$ are forbidden.
We confirm it for one of the two stubborn cases, namely for the $(H_1,H_2)=(\mbox{triangle},P_2+\nobreak P_4)$ case, by proving that the class of $(\mbox{triangle},P_2+\nobreak P_4)$-free graphs has bounded clique-width and is well-quasi-ordered.
Our technique is based on a special decomposition of $3$-partite graphs.
We also use this technique to prove that the class of $(\mbox{triangle},P_1+\nobreak P_5)$-free graphs, which is known to have bounded clique-width, is well-quasi-ordered. 
Our results enable us to complete the classification of graphs~$H$ for which the class of $(\mbox{triangle},H)$-free graphs is
well-quasi-ordered.
\end{abstract}
\end{sloppypar}

\section{Introduction}\label{s-intro}

A graph class~${\cal G}$ is well-quasi-ordered by some containment relation if for any infinite sequence $G_0, G_1, \ldots$ of graphs in~${\cal G}$, there is a pair $i,j$ with $i<j$ such that~$G_i$ is contained in~$G_j$.
A graph class~${\cal G}$ has bounded clique-width if there exists a constant~$c$ such that every graph in~${\cal G}$ has clique-width at most~$c$.
Both being well-quasi-ordered and having bounded clique-width are highly desirable properties of graph classes in the areas of discrete mathematics and theoretical computer science.
To illustrate this, let us mention the seminal project of Robertson and Seymour on graph minors that culminated in 2004 in the proof of Wagner's conjecture, which states that the set of all finite graphs is well-quasi-ordered by the minor relation.
As an algorithmic consequence, given a minor-closed graph class, it is possible to test in cubic time whether a given graph belongs to this class.
The algorithmic importance of having bounded clique-width follows from the fact that many well-known \NP-hard problems, such as {\sc Graph Colouring} and {\sc Hamilton Cycle}, become polynomial-time solvable for graph classes of bounded clique-width (this follows from combining results from several papers~\cite{CMR00,EGW01,KR03b,Ra07} with a result of Oum and Seymour~\cite{OS06}).

Courcelle~\cite{Co14} proved that the class of graphs obtained from graphs of clique-width~$3$ via one or more edge contractions has unbounded clique-width. Hence the clique-width of a graph can be much smaller than the clique-width of its minors.
On the other hand, the clique-width of a graph is at least the clique-width of any of its induced subgraphs (see, for example,~\cite{CO00}). We therefore focus on {\em hereditary} classes, that is, on graph classes that are closed under taking induced subgraphs.
It is readily seen that a class of graphs is hereditary if and only if it can be characterised by a unique set~${\cal F}$ of minimal forbidden induced subgraphs.
Our underlying research goal is to increase our understanding of the relation between well-quasi-orders and clique-width of hereditary graph classes.

As a start, we note that the hereditary class of graphs of degree at most~$2$ is not well-quasi-ordered by the induced subgraph relation, as it contains the class of cycles, which form an infinite antichain.
As every graph of degree at most~$2$ has clique-width at most~$4$, having bounded clique-width does not imply being well-quasi-ordered by the induced subgraph relation.
In 2010, Daligault, Rao and Thomass{\'e}~\cite{DRT10} asked about the reverse implication: 
does every hereditary graph class that is well-quasi-ordered by the induced subgraph relation have bounded clique-width? 
In 2015, Lozin, Razgon and Zamaraev~\cite{LRZ15} gave a negative answer.
As the set~${\cal F}$ of minimal forbidden induced subgraphs in their counter-example is infinite, the question of Daligault, Rao and Thomass{\'e}~\cite{DRT10} remains open for {\em finitely defined} hereditary graph classes, that is, hereditary graph classes for which~${\cal F}$ is finite.

\begin{conjecture}[\cite{LRZ15}]\label{c-f}
If a finitely defined hereditary class of graphs~${\cal G}$ is well-quasi-ordered by the induced subgraph relation, then~${\cal G}$ has bounded clique-width.
\end{conjecture}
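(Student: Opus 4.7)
Since this is labelled a conjecture rather than a theorem, I can only propose a direction of attack rather than sketch a proof. The natural strategy is to stratify the problem by the size and structure of the minimal forbidden family ${\cal F}$. For $|{\cal F}|=1$, it is known that the $H$-free hereditary classes that are well-quasi-ordered by the induced subgraph relation are precisely those with $H$ an induced subgraph of $P_4$, and all such classes have bounded clique-width, so the base case is settled. The present paper attacks the step $|{\cal F}|=2$ by resolving the two outstanding cases via a $3$-partite decomposition, and the hope is that the classification programme for $|{\cal F}|=2$ either propagates inductively or distils into a uniform structural principle valid for all finite ${\cal F}$.

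For the general conjecture I would proceed by contrapositive. Assume ${\cal G}$ is finitely defined and has unbounded clique-width, and try to extract an infinite induced-subgraph antichain from ${\cal G}$. The heuristic is that unboundedly complex graphs in a finitely defined class must retain enough "free" substructure, such as long induced paths, grid-like arrangements or wall-like configurations, to allow the construction of pairwise incomparable graphs. The counter-example of Lozin, Razgon and Zamaraev is built precisely by using infinitely many forbidden subgraphs to prune every candidate antichain, so the philosophical content of the conjecture is that a finite ${\cal F}$ leaves too much structural slack for every such antichain to be blocked. Concretely, one would aim to identify a finite list of canonical classes of unbounded clique-width, each known to harbour an induced-subgraph antichain, such that every hereditary class of unbounded clique-width contains at least one of them as a subclass; this would reduce the conjecture to checking the canonical list.

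The principal obstacle is that no such canonical structural decomposition of hereditary classes of unbounded clique-width is currently available, and the interplay between clique-width and induced-subgraph containment is poorly understood outside of restricted settings (bipartite graphs, bounded-degree graphs, and similar). Each $(H_1,H_2)$-free case already in the literature has required substantial ad hoc structural analysis, which suggests that no simple unifying invariant is waiting to be discovered, and it is quite plausible that the conjecture will be settled, if at all, through a long case analysis building on many results of the present type rather than via a single uniform argument.
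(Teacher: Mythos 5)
You correctly recognise that the statement is an open conjecture (due to Lozin, Razgon and Zamaraev), and it is important to be clear that the paper does not prove it either: the paper only \emph{verifies} the conjecture for one further bigenic case, namely $(H_1,H_2)=(K_3,P_2+\nobreak P_4)$, and explicitly leaves the case $(\overline{P_1+P_4},P_2+\nobreak P_3)$ open (Open Problem~\ref{o-con}). So there is no proof in the paper to compare your argument against, and your proposal, by your own admission, contains no proof — it is a programme. Judged as such, the main gap is that your central reduction (find a finite list of canonical hereditary classes of unbounded clique-width, each containing an induced-subgraph antichain, such that every hereditary class of unbounded clique-width contains one of them) is itself a far stronger and entirely unproved structural statement; no such canonical obstruction set is known, and nothing in your sketch indicates where the hypothesis that ${\cal F}$ is \emph{finite} would enter. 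That hypothesis must enter essentially: the Lozin--Razgon--Zamaraev class shows the implication is false without it, so any contrapositive argument of the kind you describe has to use finiteness of ${\cal F}$ at the point where the antichain is constructed, and your sketch is silent on this.

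It is also worth noting that the paper's actual contribution towards the conjecture runs in the opposite, positive direction from your plan. Rather than extracting antichains from unbounded clique-width, it proves \emph{both} properties simultaneously for the concrete class: prime $(K_3,P_2+\nobreak P_4)$-free graphs containing a $C_5$ are reduced, by a bounded number of vertex deletions and bipartite complementations (operations that preserve both boundedness of clique-width and labelled well-quasi-orderability, Facts~\ref{fact:del-vert}--\ref{fact:bip} and Lemma~\ref{lem:lwqo-operations}), to disjoint unions of curious $3$-partite graphs and $3$-uniform graphs; curious graphs are then handled by the slice decomposition (Lemmas~\ref{lem:slice-cw}, \ref{lem:slice-wqo}, \ref{lem:cur-type01-slice-bip} and~\ref{lem:cur-type23-slice-type12}, Theorem~\ref{thm:curious}), which transfers the desired properties from the bipartite graphs in the class. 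A further point your programme overlooks is that all verified bigenic cases, including those in this paper, in fact establish the stronger \emph{labelled} well-quasi-order, and this stronger invariant is what makes the reductions compose; any serious attack on the general conjecture will likely need to decide whether labelled and unlabelled well-quasi-ordering coincide for finitely defined classes, a question on which the Brignall--Engen--Vatter example already imposes constraints.
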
 

\noindent
If Conjecture~\ref{c-f} is true, then for finitely defined hereditary graph classes the aforementioned algorithmic consequences of having bounded clique-width also hold for the property of being well-quasi-ordered by the induced subgraph relation.
A hereditary graph class defined by a single forbidden induced subgraph~$H$ is well-quasi-ordered by the induced subgraph relation if and only if it has bounded clique-width  if and only if~$H$ is an induced subgraph of~$P_4$ (see, for instance,~\cite{DP15,Da90,KL11}).
Hence Conjecture~\ref{c-f} holds when~${\cal F}$ has size~$1$.
We consider the case when~${\cal F}$ has size~$2$, say ${\cal F}=\{H_1,H_2\}$.
Such graph classes are said to be {\em bigenic} or {\em $(H_1,H_2)$-free} graph classes.
In this case Conjecture~\ref{c-f} is also known to be true except for two stubborn open cases, namely $(H_1,H_2)=(K_3,P_2+\nobreak P_4)$ and $(H_1,H_2)=(\overline{P_1+P_4},P_2+\nobreak P_3)$; see~\cite{DLP16}.

\begin{sloppypar}
\medskip
\noindent
{\bf Our Results.} In Section~\ref{s-main}, we prove that 
Conjecture~\ref{c-f} holds for the class of $(K_3,P_2+\nobreak P_4)$-free graphs 
by showing that the class of $(K_3,P_2+\nobreak P_4)$-free graphs 
has bounded clique-width and is well-quasi-ordered by the induced subgraph relation.
We do this by using a general technique explained in Section~\ref{sec:partitioning}.
This technique is based on extending (a labelled version of) well-quasi-orderability or boundedness of clique-width of the bipartite graphs in a hereditary graph class~$X$ to a special subclass of $3$-partite graphs in~$X$.
The crucial property of these $3$-partite graphs is that no three vertices from the three different partition classes form a clique or independent set.
We say that such $3$-partite graphs {\em curious}.
A more restricted version of this concept was used to prove that $(K_3,P_1+\nobreak P_5)$-free graphs have bounded clique-width~\cite{DDP15}.
In Section~\ref{s-main} we show how to generalise results for curious  $(K_3,P_2+\nobreak P_4)$-free graphs to the whole class of  $(K_3,P_2+\nobreak P_4)$-free graphs.
In the same section we also show how to apply our technique to prove that the class of $(K_3,P_1+\nobreak P_5)$-free graphs is well-quasi-ordered
(it was already known~\cite{DDP15} that the class of $(K_3,P_1+\nobreak P_5)$-free graphs has bounded clique-width).
We note that our results also imply that the class of $(K_3,P_1+\nobreak 2P_2)$-free graphs is well-quasi-ordered, which was not previously known (see~\cite{AL15,KL11}).
See also \figurename~\ref{fig:forb} for pictures of the forbidden induced subgraphs mentioned in this paragraph.
\end{sloppypar}

\begin{figure}
\begin{center}
\begin{tabular}{cccc}
\begin{minipage}{0.24\textwidth}
\begin{center}
\begin{tikzpicture}
\coordinate (x1) at (90:1) ;
\coordinate (x2) at (90+120:1) ;
\coordinate (x3) at (90+240:1) ;
\draw [fill=black] (x1) circle (2pt) ;
\draw [fill=black] (x2) circle (2pt) ;
\draw [fill=black] (x3) circle (2pt) ;
\draw [thick] (x1) -- (x2) -- (x3) -- (x1);
\end{tikzpicture}
\end{center}
\end{minipage}
&
\begin{minipage}{0.24\textwidth}
\begin{center}
\begin{tikzpicture}
\coordinate (x1) at (90:1) ;
\coordinate (x2) at (90+72:1) ;
\coordinate (x3) at (90+144:1) ;
\coordinate (x4) at (90+216:1) ;
\coordinate (x5) at (90+288:1) ;
\draw [fill=black] (x1) circle (2pt) ;
\draw [fill=black] (x2) circle (2pt) ;
\draw [fill=black] (x3) circle (2pt) ;
\draw [fill=black] (x4) circle (2pt) ;
\draw [fill=black] (x5) circle (2pt) ;
\draw [thick] (x2) -- (x3);
\draw [thick] (x4) -- (x5);
\end{tikzpicture}
\end{center}
\end{minipage}
&
\begin{minipage}{0.24\textwidth}
\begin{center}
\begin{tikzpicture}
\coordinate (x1) at (0:1) ;
\coordinate (x2) at (60:1) ;
\coordinate (x3) at (120:1) ;
\coordinate (x4) at (180:1) ;
\coordinate (x5) at (240:1) ;
\coordinate (x6) at (300:1) ;
\draw [fill=black] (x1) circle (2pt) ;
\draw [fill=black] (x2) circle (2pt) ;
\draw [fill=black] (x3) circle (2pt) ;
\draw [fill=black] (x4) circle (2pt) ;
\draw [fill=black] (x5) circle (2pt) ;
\draw [fill=black] (x6) circle (2pt) ;
\draw [thick] (x5) -- (x6) -- (x1) -- (x2) -- (x3);
\end{tikzpicture}
\end{center}
\end{minipage}
&
\begin{minipage}{0.24\textwidth}
\begin{center}
\begin{tikzpicture}
\coordinate (x1) at (0:1) ;
\coordinate (x2) at (60:1) ;
\coordinate (x3) at (120:1) ;
\coordinate (x4) at (180:1) ;
\coordinate (x5) at (240:1) ;
\coordinate (x6) at (300:1) ;
\draw [fill=black] (x1) circle (2pt) ;
\draw [fill=black] (x2) circle (2pt) ;
\draw [fill=black] (x3) circle (2pt) ;
\draw [fill=black] (x4) circle (2pt) ;
\draw [fill=black] (x5) circle (2pt) ;
\draw [fill=black] (x6) circle (2pt) ;
\draw [thick] (x2) -- (x3);
\draw [thick] (x4) -- (x5) -- (x6) -- (x1);
\end{tikzpicture}
\end{center}
\end{minipage}\\
\\
$K_3$ & $P_1+\nobreak 2P_2$ & $P_1+\nobreak P_5$ & $P_2+\nobreak P_4$
\end{tabular}
\end{center}
\caption{\label{fig:forb}The forbidden induced subgraphs considered in our results.}
\end{figure}
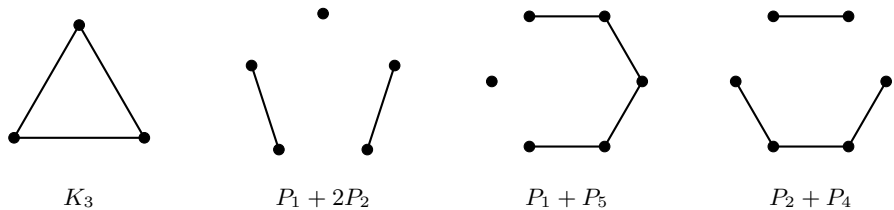

\begin{sloppypar}
\medskip
\noindent
{\bf Dichotomies.}
Previously, well-quasi-orderability was known for $(K_3,P_6)$-free graphs~\cite{AL15}, $(P_2+\nobreak P_4)$-free bipartite graphs~\cite{KV11} and $(P_1+\nobreak P_5)$-free bipartite graphs~\cite{KV11}.
It has also been shown that $H$-free bipartite graphs are not well-quasi-ordered if~$H$ contains an induced $3P_1+\nobreak P_2$~\cite{KL11}, $3P_2$~\cite{Di92} or $2P_3$~\cite{KV11}.
Moreover, for every $s\geq 1$, the class of $(K_3,sP_1)$-free graphs is finite due to Ramsey's Theorem~\cite{Ra30}.
The above results lead to the following known dichotomy for $H$-free bipartite graphs.
\end{sloppypar}

\begin{theorem}\label{t-main0}
Let~$H$ be a graph.
The class of $H$-free bipartite graphs is well-quasi-ordered by the induced subgraph relation if and only if $H=sP_1$ for some $s\geq 1$ or~$H$ is an induced subgraph of $P_1+\nobreak P_5$, $P_2+\nobreak P_4$ or~$P_6$.
\end{theorem}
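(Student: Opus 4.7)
The plan is a two-direction argument assembling the already-cited results from the preceding paragraph with a short structural case analysis on $H$.

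For the sufficiency direction I would split on which of the four listed shapes $H$ has and use a different source in each case. If $H = sP_1$ then every $H$-free bipartite graph is in particular $(K_3,sP_1)$-free, and Ramsey's theorem \cite{Ra30} bounds its number of vertices; the class is finite and thus trivially well-quasi-ordered. If $H$ is an induced subgraph of $P_1+\nobreak P_5$ or $P_2+\nobreak P_4$, then the class of $H$-free bipartite graphs is a hereditary subclass of the $(P_1+\nobreak P_5)$-free or $(P_2+\nobreak P_4)$-free bipartite graphs, classes shown to be well-quasi-ordered in \cite{KV11}. If $H$ is an induced subgraph of $P_6$, then the class of $H$-free bipartite graphs is contained in the class of $(K_3,P_6)$-free graphs, well-quasi-ordered by \cite{AL15}. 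Since well-quasi-ordering is inherited by hereditary subclasses, each case is immediate once the ambient class is known to be well-quasi-ordered.

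For the necessity direction I would argue by contrapositive: assume $H$ is not of any listed form. If $H$ is not bipartite, then every bipartite graph is trivially $H$-free, and the even cycles $C_6,C_8,C_{10},\ldots$ form an infinite antichain under the induced subgraph relation, as recorded in the introduction. Otherwise $H$ is bipartite, contains at least one edge, and is not an induced subgraph of $P_1+\nobreak P_5$, $P_2+\nobreak P_4$, or $P_6$. Here I would carry out a case analysis based on the number of connected components of $H$ and the sizes of the components containing an edge, to show that $H$ must contain $3P_1+\nobreak P_2$, $3P_2$, or $2P_3$ as an induced subgraph. The three non-well-quasi-ordering results quoted from \cite{KL11}, \cite{Di92}, and \cite{KV11} then supply, inside any of these smaller classes, an infinite antichain of bipartite graphs; since any such graph is in particular $H$-free, the class of $H$-free bipartite graphs inherits the antichain.

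The main obstacle is the structural case analysis in the bipartite subcase of the necessity direction. Several observations dispose of wide swathes at once: four or more components together with at least one edge already force $3P_1+\nobreak P_2$; three components each containing an edge force $3P_2$; two components each containing an induced $P_3$ force $2P_3$; and a long induced even cycle or long induced path (for instance $C_8$ or $P_7$) forces $2P_3$. What remains is to check carefully those bipartite $H$ that narrowly escape being an induced subgraph of $P_1+\nobreak P_5$, $P_2+\nobreak P_4$, or $P_6$, in particular small connected bipartite graphs with a $C_4$ or with a vertex of degree at least three. Once this enumeration is complete, every remaining ingredient is a direct citation.
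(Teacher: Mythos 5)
There is a genuine gap, and it sits in the bipartite case of your necessity direction. Your stated goal there is to show that every bipartite $H$ with an edge that is not an induced subgraph of $P_1+P_5$, $P_2+P_4$ or $P_6$ must contain an induced $3P_1+P_2$, $3P_2$ or $2P_3$, so that the cited antichain results of \cite{KL11}, \cite{Di92}, \cite{KV11} finish the job. That claim is false. Take $H=C_4$, $C_6$, $K_{1,3}$ (or any star $K_{1,n}$, any complete bipartite $K_{m,n}$ with $m,n\geq 2$, or small subdivided claws such as $S_{1,1,2}$ and $S_{1,1,3}$): each of these is bipartite, has an edge, and is not an induced subgraph of the three listed linear forests (it contains a cycle or a vertex of degree~$3$), yet none of them contains $3P_1+P_2$, $3P_2$ or $2P_3$ as an induced subgraph ($C_4$ and $K_{1,3}$ have only four vertices; in $K_{1,n}$ every vertex outside an edge is adjacent to the centre; $C_6$ has no induced $2P_3$, no induced matching of size three and no edge with three independent non-neighbours; similarly for $S_{1,1,2}$, $S_{1,1,3}$). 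So for infinitely many such $H$ your enumeration cannot end ``in a direct citation'' of those three results, and the case you flag as ``small connected bipartite graphs with a $C_4$ or with a vertex of degree at least three'' is exactly where the plan breaks.

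The missing idea is the same cycle antichain you already use in the non-bipartite case, applied whenever $H$ is not a linear forest: if $H$ contains a cycle or a vertex of degree at least~$3$, then every even cycle $C_{2k}$ with $2k>|V(H)|$ is $H$-free (its proper induced subgraphs are linear forests), so the long even cycles form an infinite antichain of $H$-free bipartite graphs. The correct split of the necessity direction is therefore: (a) $H$ not a linear forest (this subsumes $H$ non-bipartite) --- use long even cycles; (b) $H$ a linear forest with an edge not contained in $P_1+P_5$, $P_2+P_4$ or $P_6$ --- here your component-counting argument does work and shows $H\supseteq_i 3P_1+P_2$, $3P_2$ or $2P_3$. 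For comparison, the paper does not write out a proof at all: it presents the theorem as a known dichotomy assembled from exactly the citations you use, with the cycle observation recorded earlier in the introduction; your sufficiency direction (Ramsey for $sP_1$, \cite{KV11} for $P_1+P_5$ and $P_2+P_4$, \cite{AL15} for $P_6$ via triangle-freeness of bipartite graphs) matches that assembly and is fine.
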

Now combining the aforementioned known results for $(K_3,H)$-free graphs and $H$-free bipartite graphs with our new results yields the following new dichotomy for $H$-free triangle-free graphs, which is exactly the same as the one in Theorem~\ref{t-main0}.

\begin{theorem}\label{t-main}
Let~$H$ be a graph. The class of $(K_3,H)$-free graphs is well-quasi-ordered by the induced subgraph relation if and only if $H=sP_1$ for some $s\geq 1$ or $H$ is an induced subgraph of $P_1+\nobreak P_5$, $P_2+\nobreak P_4$, or~$P_6$.
\end{theorem}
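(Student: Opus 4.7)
The plan is to derive Theorem~\ref{t-main} by combining the bipartite dichotomy of Theorem~\ref{t-main0} with the new well-quasi-ordering results for $(K_3,P_1+\nobreak P_5)$- and $(K_3,P_2+\nobreak P_4)$-free graphs that will be established in Section~\ref{s-main}. The two directions are handled separately.

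For the \emph{only if} direction, I would observe that every bipartite graph is triangle-free, so the class of $H$-free bipartite graphs is contained in the class of $(K_3,H)$-free graphs. Since well-quasi-ordering by the induced subgraph relation is inherited by subclasses, if the latter is well-quasi-ordered then so is the former, and Theorem~\ref{t-main0} then forces $H$ to be either $sP_1$ for some $s\geq 1$ or an induced subgraph of $P_1+\nobreak P_5$, $P_2+\nobreak P_4$, or~$P_6$.

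For the \emph{if} direction I would split on $H$. If $H=sP_1$, then Ramsey's theorem~\cite{Ra30} gives an absolute bound on the order of any $(K_3,sP_1)$-free graph, so the class is finite and trivially well-quasi-ordered. In all remaining cases $H$ is an induced subgraph of one of $P_6$, $P_1+\nobreak P_5$, or $P_2+\nobreak P_4$. The key monotonicity observation is that if $H' \ssi H$, then every $H'$-free graph is $H$-free (any induced copy of $H$ in a graph $G$ would contain an induced copy of $H'$), so the class of $(K_3,H')$-free graphs is a subclass of the class of $(K_3,H)$-free graphs, and well-quasi-ordering descends to subclasses. It therefore suffices to handle the three ``maximal'' cases: $H=P_6$ is the known result of~\cite{AL15}, while $H=P_1+\nobreak P_5$ and $H=P_2+\nobreak P_4$ are exactly the new theorems proved in Section~\ref{s-main}.

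The proof of Theorem~\ref{t-main} itself is therefore just a bookkeeping exercise; the genuine mathematical work is entirely absorbed into Section~\ref{s-main} and the cited results, so there is no serious obstacle. The only small subtleties to keep in mind are the direction of the subclass relation under $\ssi$, and the need to handle $H=sP_1$ (for large $s$, e.g.\ $s\geq 5$) separately via Ramsey rather than by monotonicity, since such $sP_1$ is not an induced subgraph of any of $P_6$, $P_1+\nobreak P_5$, or $P_2+\nobreak P_4$.
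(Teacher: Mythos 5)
Your proposal is correct and follows essentially the same route as the paper, which obtains Theorem~\ref{t-main} by combining the bipartite dichotomy of Theorem~\ref{t-main0} (giving the only-if direction via the containment of $H$-free bipartite graphs in the $(K_3,H)$-free class), Ramsey's Theorem for $H=sP_1$, the known result for $(K_3,P_6)$-free graphs~\cite{AL15}, and the new Theorem~\ref{thm:main} for $H=P_1+\nobreak P_5$ and $H=P_2+\nobreak P_4$. Your attention to the monotonicity under induced subgraphs and to treating $sP_1$ (for $s\geq 5$) separately matches the paper's intended bookkeeping.
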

Besides our technique based on curious graphs, we also expect that Theorem~\ref{t-main} will itself be a useful ingredient for showing results for other graph classes, just as Theorem~\ref{t-main0} has already proven to be useful (see e.g.~\cite{KV11}).

For clique-width the following dichotomy is known for $H$-free bipartite graphs. 

\begin{theorem}[\cite{DP14}]\label{t-bipartite}
Let~$H$ be a graph.
The class of $H$-free bipartite graphs has bounded
clique-width if and only~if  $H=sP_1$ for some $s\geq 1$ or~$H$ is an induced subgraph of $K_{1,3}+3P_1$, 
$K_{1,3}+\nobreak P_2$, $P_1+\nobreak S_{1,1,3}$ or~$S_{1,2,3}$.
\end{theorem}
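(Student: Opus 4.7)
The plan is to prove this dichotomy by handling the bounded and unbounded directions separately. For the unbounded direction, I would exhibit, for each graph $H$ not in the listed family, an infinite family of bipartite graphs that is $H$-free and has unbounded clique-width. Natural candidates are the well-studied families known to have unbounded clique-width in the bipartite setting: walls $W_n$, their bipartite complements, subdivisions of walls, and a few sporadic constructions. The combinatorial core then reduces to verifying that $sP_1$, $K_{1,3}+3P_1$, $K_{1,3}+P_2$, $P_1+S_{1,1,3}$, and $S_{1,2,3}$ are precisely the maximal graphs that fail to appear as an induced subgraph in all sufficiently large members of at least one such family; so if $H$ is not among them, some family of unbounded clique-width is $H$-free.

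For the bounded direction, I would treat each maximal $H$ in the list separately. The case $H = sP_1$ is immediate: both sides of a bipartition are independent sets, so forbidding $sP_1$ forces the whole graph to have fewer than $2s$ vertices, and a finite class trivially has bounded clique-width. For the four exceptional graphs, I would adopt a common template: fix a bipartition $(A,B)$ and partition the vertices of $A$ into equivalence classes according to their neighbourhood in $B$ (and symmetrically for $B$). Forbidding a tree-like induced subgraph restricts the possible pattern of containments and incomparabilities among these neighbourhoods, typically forcing a bounded number of essentially different types. One then combines bipartite complementation and modular decomposition, both of which preserve bounded clique-width up to a small additive loss, to reduce the graph to a labelled quotient of bounded size from which a $k$-expression can be built.

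The main obstacle is the bounded direction for $H = S_{1,2,3}$, which is the largest and most delicate excluded graph. Here the neighbourhood-containment patterns compatible with $S_{1,2,3}$-freeness are much richer than in the other cases, so one needs a careful analysis to show that, up to twins and bipartite complementation, every such graph decomposes into only a boundedly many structured pieces. I would first establish the simpler cases $K_{1,3}+3P_1$ and $K_{1,3}+P_2$ to develop the neighbourhood-analysis machinery, push it through for $P_1+S_{1,1,3}$ by exploiting the extra isolated vertex as a \emph{forbidden-attachment} witness, and finally adapt it to $S_{1,2,3}$ by using the three-legged spider structure to force particularly constrained inclusion patterns among bipartite neighbourhoods.
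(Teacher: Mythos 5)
First, a point of reference: this paper does not prove Theorem~\ref{t-bipartite} at all --- it is imported verbatim from~\cite{DP14} (with the hardest bounded case, $S_{1,2,3}$-free bipartite graphs, going back to Lozin and Volz), so there is no in-paper argument to compare your plan against; it can only be assessed as a stand-alone sketch. As such it has genuine gaps, because both directions defer exactly the steps that constitute the theorem. For the unbounded direction, walls, their bipartite complements and subdivided walls cannot be the whole story: you must exhibit an $H$-free unbounded family for \emph{every} forest $H$ outside the listed family, and graphs such as $2P_3$ and $3P_2$ (neither of which is an induced subgraph of $K_{1,3}+3P_1$, $K_{1,3}+P_2$, $P_1+S_{1,1,3}$ or $S_{1,2,3}$) do occur as induced subgraphs of walls and of their subdivisions, so these cases need different $H$-free bipartite constructions of unbounded clique-width; saying ``the combinatorial core then reduces to verifying'' which graphs avoid which families is naming the work, not doing it, and ``a few sporadic constructions'' is precisely where the known proofs invest their effort.

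For the bounded direction, the template ``partition each side into neighbourhood classes; forbidding a tree forces a bounded number of essentially different types'' cannot be taken literally: already $2P_2$-free (chain) bipartite graphs, which are $S_{1,2,3}$-free, have arbitrarily many pairwise distinct neighbourhoods, so boundedness there comes from the neighbourhoods forming a chain, not from there being few of them, and no generic ``quotient of bounded size'' exists. What is actually required is a structural decomposition theorem for $S_{1,2,3}$-free bipartite graphs (the Lozin--Volz result), and further nontrivial reductions (via vertex deletions and bipartite complementations, in the spirit of Facts~\ref{fact:del-vert}--\ref{fact:bip}) for $K_{1,3}+3P_1$, $K_{1,3}+P_2$ and $P_1+S_{1,1,3}$; your sketch acknowledges that the $S_{1,2,3}$ case is delicate but presupposes the decomposition rather than supplying it, and the ``forbidden-attachment witness'' idea for $P_1+S_{1,1,3}$ is likewise left unspecified at the point where the argument would have to be carried out. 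So the proposal is a plausible roadmap, but the two pillars it rests on --- the exhaustive catalogue of unbounded witnesses and the structural boundedness proofs --- are missing, and they are exactly the content of~\cite{DP14}.
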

It would be interesting to determine whether $(K_3,H)$-free graphs allow the same dichotomy with respect to the boundedness of their clique-width.
The evidence so far is affirmative, but in order to answer this question two remaining cases need to be solved, namely $(H_1,H_2)=(K_3,P_1+\nobreak S_{1,1,3})$
and $(H_1,H_2)=(K_3,S_{1,2,3})$; see Section~\ref{sec:prelim} for the definition of the graph~$S_{h,i,j}$. Both cases turn out to be highly non-trivial; in particular, the class of $(K_3,P_1+\nobreak S_{1,1,3})$-free graphs contains the class of $(K_3,P_1+\nobreak P_5)$-free graphs, and the class of $(K_3,S_{1,2,3})$-free graphs contains both the classes of $(K_3,P_1+\nobreak P_5)$-free and $(K_3,P_2+\nobreak P_4)$-free graphs.

In Section~\ref{s-con} we give state-of-the-art summaries for well-quasi-orderability and boundedness of clique-width of bigenic graph classes (which include our new results), together with an overview of the missing cases for both problems (which include the missing cases mentioned in this section).

\section{Preliminaries}\label{sec:prelim}

Throughout the paper, we consider only finite, undirected graphs without multiple edges or self-loops.
Below, we define further graph terminology.

The {\em disjoint union} $(V(G)\cup V(H), E(G)\cup E(H))$ of two vertex-disjoint graphs~$G$ and~$H$ is denoted by~$G+\nobreak H$ and the disjoint union of~$r$ copies of a graph~$G$ is denoted by~$rG$. The {\em complement}~$\overline{G}$ of a graph~$G$ has vertex set $V(\overline{G})=\nobreak V(G)$ and an edge between two distinct vertices $u,v$
if and only if $uv\notin E(G)$. 
For a subset $S\subseteq V(G)$, we let~$G[S]$ denote the subgraph of~$G$ {\em induced} by~$S$, which has vertex set~$S$ and edge set $\{uv\; |\; u,v\in S, uv\in E(G)\}$.
If $S=\{s_1,\ldots,s_r\}$ then, to simplify notation, we may also write $G[s_1,\ldots,s_r]$ instead of $G[\{s_1,\ldots,s_r\}]$.
We use $G \setminus S$ to denote the graph obtained from~$G$ by deleting every vertex in~$S$, that is, $G \setminus S = G[V(G)\setminus S]$.
We write $G'\subseteq_i G$ to indicate that~$G'$ is an induced subgraph of~$G$.

The graphs $C_r$, $K_r$, $K_{1,r-1}$ and~$P_r$ denote the cycle, complete graph, star and path on~$r$ vertices, respectively.
The graphs~$K_3$ and~$K_{1,3}$ are also called the {\em triangle} and {\em claw}, respectively.
A graph~$G$ is a {\em linear forest} if every component of~$G$ is a path (on at least one vertex).
The graph~$S_{h,i,j}$, for $1\leq h\leq i\leq j$, denotes the {\em subdivided claw}, that is,
the tree that has only one vertex~$x$ of degree~$3$ and exactly three leaves, which are of distance~$h$,~$i$ and~$j$ from~$x$, respectively.
Observe that $S_{1,1,1}=K_{1,3}$.
We let~${\cal S}$ denote the class of graphs, each connected component of which is either a subdivided claw or a path.

For a set of graphs $\{H_1,\ldots,H_p\}$, a graph~$G$ is {\em $(H_1,\ldots,H_p)$-free} if it has no induced subgraph isomorphic to a graph in $\{H_1,\ldots,H_p\}$;
if~$p=1$, we may write $H_1$-free instead of $(H_1)$-free.
A graph is~{\em $k$-partite} if its vertex can be partitioned into~$k$ (possibly empty) independent sets; $2$-partite graphs are also known as {\em bipartite} graphs.
The {\em biclique} or {\em complete bipartite graph}~$K_{r,s}$ is the bipartite graph with sets in the partition of size~$r$ and~$s$ respectively, such that every vertex in one set is adjacent to every vertex in the other set.
For a graph $G=(V,E)$, the set $N(u)=\{v\in V\; |\; uv\in E\}$ denotes the {\em neighbourhood} of $u\in V$.
Let~$X$ be a set of vertices in~$G$.
A vertex $y\in V\setminus X$ is {\em complete} to~$X$ if it is adjacent to every vertex of~$X$ and {\em anti-complete} to~$X$ if it is {\em non}-adjacent to every vertex of~$X$.
A set of vertices $Y\subseteq V\setminus X$ is {\em complete} (resp. {\em anti-complete}) to~$X$ if every vertex in~$Y$ is complete (resp. anti-complete) to~$X$.
If~$X$ and~$Y$ are disjoint sets of vertices in a graph, we say that the edges between these two sets form a {\em matching} if each vertex in~$X$ has at most one neighbour in~$Y$ and vice versa
(if each vertex has exactly one such neighbour, we say that the matching is {\em perfect}).
Similarly, the edges between these sets form a {\em co-matching} if each vertex in~$X$ has at most one non-neighbour in~$Y$ and vice versa.
We say that the set~$X$ {\em dominates}~$Y$ if every vertex of~$Y$ has at least one neighbour in~$X$.
Similarly, a vertex~$x$ {\em dominates}~$Y$ if every vertex of~$Y$ is adjacent to~$x$.
A vertex $y\in V\setminus X$ {\em distinguishes}~$X$ if~$y$ has both a neighbour and a non-neighbour in~$X$. 
The set~$X$ is a {\em module} of~$G$ if no vertex in $V\setminus X$ distinguishes~$X$.
A module~$X$ is {\em non-trivial} if $1<|X|<|V|$, otherwise it is {\em trivial}. 
A graph is {\em prime} if it has only trivial modules.
Two vertices are {\em false twins} if they have the same neighbourhood (note that such vertices must be non-adjacent).
Clearly any prime graph on at least three vertices cannot contain a pair of false twins, as any such pair of vertices would form a non-trivial module.

We will use the following structural result.

\begin{lemma}[\cite{DDP15}]\label{lem:noC5-partial}
Let~$G$ be a connected $(K_3,C_5,S_{1,2,3})$-free graph that does not contain a pair of false twins.
Then~$G$ is either bipartite or a cycle.
\end{lemma}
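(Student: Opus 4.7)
The plan is as follows. Assume $G$ is not bipartite, with the goal of showing $G$ is an induced cycle. Since $G$ contains an induced odd cycle and is $(K_3,C_5)$-free, the shortest induced odd cycle $C=v_1v_2\cdots v_{2k+1}v_1$ satisfies $2k+1\geq 7$. Because $C$ is induced, it suffices by connectivity to show that no $u\in V(G)\setminus V(C)$ has a neighbour on $C$; this yields $V(G)=V(C)$ and hence $G=C$. I would therefore fix a putative $u\in V(G)\setminus V(C)$ with $N(u)\cap V(C)\neq\emptyset$ and derive a contradiction.

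First I would show $|N(u)\cap V(C)|\geq 2$: if $u$'s only $C$-neighbour were $v_1$, then $\{u,v_{2k-1},v_{2k},v_{2k+1},v_1,v_2,v_3\}$ would induce $S_{1,2,3}$ with centre $v_1$ and leg lengths $1,2,3$. Next I would show that $u$ has exactly two $C$-neighbours at arc-distance $2$: writing $d_1,\ldots,d_m$ for the arc-lengths between consecutive $C$-neighbours of $u$, each arc together with $u$ is an induced cycle of length $d_j+2$, so $K_3$- and $C_5$-freeness force $d_j\notin\{1,3\}$ and the minimality of $|C|$ forces each odd $d_j$ to be $\geq 2k-1$. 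Since $\sum d_j=2k+1$ is odd, this yields $m=2$ and $\{d_1,d_2\}=\{2,2k-1\}$, and after relabelling $N(u)\cap V(C)=\{v_1,v_3\}$.

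Now $u$ and $v_2$ are non-adjacent (else the triangle $uv_1v_2$) and share the $V(C)$-neighbourhood $\{v_1,v_3\}$, so the no-false-twin hypothesis gives some $w\in V(G)$ adjacent to exactly one of them; moreover $w\notin V(C)$ because the only $C$-vertices adjacent to $u$ or $v_2$ are $v_1,v_3$, each adjacent to both. Up to swapping $u\leftrightarrow v_2$ there are two cases. In Case A ($w\sim u$, $w\not\sim v_2$): if $w$ has $0$ or $1$ $C$-neighbours, the first-stage $S_{1,2,3}$-construction (applied to $u$ with $w$ as an extra leaf, or applied to $w$) yields a contradiction; otherwise the second-stage analysis applied to $w$ gives $w\sim v_p,v_{p+2}$ for some index $p$, where triangle-freeness together with $w\not\sim v_2$ confines $p$ to $\{4,\ldots,2k-1\}$, and then the two induced cycles $u{-}v_3{-}\cdots{-}v_p{-}w{-}u$ and $u{-}v_1{-}v_{2k+1}{-}\cdots{-}v_{p+2}{-}w{-}u$ (of lengths $p$ and $2k+3-p$) force $p$ to be simultaneously even and odd via the parity and minimality constraints. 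In Case B ($w\sim v_2$, $w\not\sim u$): the same reduction applied to $w$ leads, after reflecting $C$ through $v_2$ if necessary, to $w\sim v_2,v_4$, and then $\{v_1,u,v_2,w,v_{2k+1},v_{2k},v_{2k-1}\}$ induces $S_{1,2,3}$ at $v_1$ with legs $v_1u$, $v_1v_2w$, $v_1v_{2k+1}v_{2k}v_{2k-1}$.

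The main obstacle I anticipate is locating the correct $S_{1,2,3}$ in Case B: the obvious seven-vertex candidates drawn from nearby vertices $\{u,v_1,v_2,v_3,v_4,w,\ldots\}$ each acquire a chord from one of $uv_1$, $wv_4$, $v_2v_3$ or $v_3v_4$, and the only way around this is to centre the $S_{1,2,3}$ at $v_1$ and route its length-$3$ leg along the long arc of $C$ through $v_{2k+1},v_{2k},v_{2k-1}$, so that $v_3,v_4$ are excluded and $w$ features only as the tip of the length-$2$ leg. A secondary point requiring care is verifying in Case A that both cycles constructed are genuinely induced, which relies on $u$ and $w$ each having precisely the prescribed pair of $C$-neighbours.
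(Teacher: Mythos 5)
This paper does not prove Lemma~\ref{lem:noC5-partial}; it imports it from~\cite{DDP15}, so there is no in-paper proof to measure your argument against, and I have assessed it on its own terms. Your proof is correct. The key steps all check out: a shortest induced odd cycle $C$ has length $2k+1\geq 7$ by $(K_3,C_5)$-freeness; any outside vertex with exactly one neighbour on $C$ yields the induced $S_{1,2,3}$ centred at that neighbour (using $2k+1\geq 7$ to keep the seven vertices distinct and chord-free); the arc-length analysis ($d_j\neq 1,3$, odd $d_j\geq 2k-1$, $\sum d_j=2k+1$) does force exactly two neighbours $v_1,v_3$ at arc-distance~$2$; the false-twin hypothesis then produces $w\notin V(C)$ distinguishing $u$ from $v_2$; in Case~A the two cycles through $u,w$ have lengths $p$ and $2k+3-p$ with $4\leq p\leq 2k-1$, both induced because $u$ and $w$ have exactly the prescribed $C$-neighbours, and since their lengths sum to the odd number $2k+3$ one of them is an induced odd cycle of length at most $2k-1$, contradicting minimality (or $C_5$-freeness); and in Case~B, after the reflection fixing $v_2$, the set $\{v_1,u,v_2,w,v_{2k+1},v_{2k},v_{2k-1}\}$ is indeed an induced $S_{1,2,3}$ centred at $v_1$, since $w$'s only $C$-neighbours $v_2,v_4$ and $u$'s only $C$-neighbours $v_1,v_3$ avoid the long-arc leg when $2k+1\geq 7$. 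The only cosmetic quibbles are the phrase ``up to swapping $u\leftrightarrow v_2$'' (the two vertices are not symmetric, but you treat both cases explicitly anyway) and that you should note a vertex off $C$ with a neighbour on $C$ exists by connectivity whenever $V(G)\neq V(C)$; neither affects correctness.
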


\subsection{Clique-width}\label{sec:clique-width}
The {\em clique-width}~$\cw(G)$ of a graph~$G$ is the minimum
number of labels needed to
construct~$G$ by
using the following four operations:
\begin{enumerate}
\item $i(v)$: creating a new graph consisting of a single vertex~$v$ with label~$i$;
\item $G_1\oplus \nobreak G_2$: taking the disjoint union of two labelled graphs~$G_1$ and~$G_2$;
\item $\eta_{i,j}$: joining each vertex with label~$i$ to each vertex with label~$j$ ($i\neq j$);
\item $\rho_{i\rightarrow j}$: renaming label~$i$ to~$j$.
\end{enumerate}
\noindent
A class of graphs~${\cal G}$ has {\em bounded} clique-width if there is a constant~$c$ such that the clique-width of every graph in~${\cal G}$ is at most~$c$; otherwise the clique-width  is {\em unbounded}.

Let~$G$ be a graph.
We define the following operations.
For an induced subgraph $G'\ssi G$, the {\em subgraph complementation} operation (acting on~$G$ with respect to~$G'$) replaces every edge present in~$G'$
by a non-edge, and vice versa.
Similarly, for two disjoint vertex subsets~$S$ and~$T$ in~$G$, the {\em bipartite complementation} operation with respect to~$S$ and~$T$ acts on~$G$ by replacing
every edge with one end-vertex in~$S$ and the other one in~$T$ by a non-edge and vice versa.

We now state some useful facts about how the above operations (and some other ones) influence the clique-width of a graph.
We will use these facts throughout the paper.
Let $k\geq 0$ be a constant and let~$\gamma$ be some graph operation.
We say that a graph class~${\cal G'}$ is {\em $(k,\gamma)$-obtained} from a graph class~${\cal G}$
if the following two conditions hold:
\begin{enumerate}
\item every graph in~${\cal G'}$ is obtained from a graph in~${\cal G}$ by performing~$\gamma$ at most~$k$ times, and
\item for every $G\in {\cal G}$ there exists at least one graph
in~${\cal G'}$ obtained from~$G$ by performing~$\gamma$ at most~$k$ times.
\end{enumerate}
\noindent
We say that~$\gamma$ {\em preserves} boundedness of clique-width if
for any finite constant~$k$ and any graph class~${\cal G}$, any graph class~${\cal G}'$ that is $(k,\gamma)$-obtained from~${\cal G}$
has bounded clique-width if and only if~${\cal G}$ has bounded clique-width.
\begin{enumerate}[\bf F{a}ct 1.]
\item \label{fact:del-vert} Vertex deletion preserves boundedness of clique-width~\cite{LR04}.\\[-1.3em]

\item \label{fact:comp} Subgraph complementation preserves boundedness of clique-width~\cite{KLM09}.\\[-1.3em]

\item \label{fact:bip} Bipartite complementation preserves boundedness of clique-width~\cite{KLM09}.\\[-1.6em]

\end{enumerate}

\begin{lemma}[\cite{CO00}]\label{lem:prime-cw}
Let~$G$ be a graph and let~${\cal P}$ be the set of all induced subgraphs of~$G$ that are prime.
Then $\cw(G)=\max_{H \in {\cal P}}\cw(H)$.
\end{lemma}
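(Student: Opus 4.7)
The plan is to prove the two inequalities separately. The direction $\cw(G)\geq\max_{H \in {\cal P}}\cw(H)$ is immediate from the monotonicity of clique-width under taking induced subgraphs noted in Section~\ref{sec:prelim}: every $H \in {\cal P}$ satisfies $\cw(H)\leq\cw(G)$, so the maximum over ${\cal P}$ is bounded by $\cw(G)$.

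For the reverse inequality I would proceed by induction on $|V(G)|$, writing $k = \max_{H \in {\cal P}}\cw(H)$. If $G$ itself is prime then $G\in{\cal P}$ and $\cw(G)\leq k$ trivially. Otherwise the standard modular decomposition theorem writes $G=F[M_1,\ldots,M_t]$, in which $M_1,\ldots,M_t$ are the maximal proper strong modules of $G$ and the quotient $F$ is either $\overline{K_t}$ (when $G$ is disconnected), $K_t$ (when $\overline{G}$ is disconnected), or a prime graph on $t\geq 4$ vertices. Each $M_i$ is a proper induced subgraph of $G$, so the inductive hypothesis gives $\cw(M_i)\leq k$. Choosing one representative from each $M_i$ exhibits $F$ as an induced subgraph of $G$, so when $F$ is prime one has $F\in{\cal P}$ and $\cw(F)\leq k$; when $F\in\{K_t,\overline{K_t}\}$ one checks $\cw(F)\leq 2\leq k$ directly, noting that $G$ then contains an edge, so $K_2\in{\cal P}$ with $\cw(K_2)=2$, the edgeless case being immediate.

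The heart of the argument is to assemble a $k$-expression for $G$ by substituting expressions for the modules into an optimal expression for $F$. Starting from a $\cw(F)$-expression in which vertex $v_i$ of $F$ is introduced with some label $\ell_i$, I would replace that introduction step with a sub-expression that builds $M_i$ on at most $\cw(M_i)\leq k$ internal labels and then, via $\rho$-operations, collapses every vertex of $M_i$ to the single label $\ell_i$. Because $M_i$ is a module of $G$, every vertex of $M_i$ has the same neighbourhood in $V(G)\setminus M_i$ as $v_i$ does in $F$; hence the remaining $\eta$ and $\rho$ operations inherited from the $F$-expression, now acting on the bundle of $M_i$-vertices that all share label $\ell_i$, correctly produce every edge of $G$ joining $M_i$ to the rest of the graph.

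The main obstacle is label hygiene during this substitution: while $M_i$ is being assembled, its internal labels must not clash with labels that already carry vertices of modules $M_j$ placed earlier. Because each completed $M_j$ has already been collapsed to the single label $\ell_j$, the number of labels holding ``external'' vertices at any point equals the number of labels in use at the corresponding step of the $F$-expression, which is at most $\cw(F)\leq k$; combined with the bound $\cw(M_i)\leq k$ on $M_i$'s internal labels and careful scheduling that recycles each internal label once its vertices are collapsed, the whole construction fits inside $\{1,\ldots,k\}$. Pushing this careful bookkeeping through is the delicate part of the argument, after which the induction closes.
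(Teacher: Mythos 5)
The paper does not prove this statement at all: it is quoted directly from Courcelle and Olariu~\cite{CO00}, so there is no internal proof to compare against. Your argument is essentially the standard proof of that result, and it is sound: the lower bound is the monotonicity of clique-width under induced subgraphs, and the upper bound follows by induction via modular decomposition $G=F[M_1,\ldots,M_t]$, using that the quotient~$F$ is either prime (and then an induced subgraph of~$G$ obtained by taking one representative per module, so $\cw(F)\leq k$) or complete/edgeless (clique-width at most~$2$, covered since $K_2$ is prime under the paper's definition and present whenever~$G$ has an edge), and that each $G[M_i]$ satisfies $\cw(M_i)\leq k$ by induction because its prime induced subgraphs are prime induced subgraphs of~$G$. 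One remark: the ``label hygiene'' issue you flag as the delicate heart of the argument is in fact a non-issue. A clique-width expression is a term (a tree), and the sub-expression building $G[M_i]$ is a separate branch that is only attached to the rest by a disjoint-union operation after $M_i$ has been fully constructed and all its vertices relabelled to~$\ell_i$; the $\eta$ and $\rho$ operations inside that branch act only on the vertices created within it, so its internal labels can freely reuse names already carrying external vertices in other branches, and no recycling or scheduling is needed. With that observation the substitution immediately yields $\cw(G)\leq\max\bigl(\cw(F),\max_i\cw(M_i)\bigr)\leq k$, and the induction closes as you describe.
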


\subsection{Well-quasi-orderability}
A {\em quasi order}~$\leq$ on a set~$X$ is a reflexive, transitive binary relation.
Two elements $x,y \in X$ in this quasi-order are {\em comparable} if $x \leq y$ or $y \leq x$, otherwise they are {\em incomparable}.
A set of elements in a quasi-order is a {\em chain} if every pair of elements is comparable and it is an {\em antichain} if every pair of elements is incomparable.
The quasi-order~$\leq$ is a {\em well-quasi-order}
if any infinite sequence of elements $x_1,x_2,x_3,\ldots$ in~$X$
contains a pair $(x_i,x_j)$ with $x_i \leq x_j$ and $i<j$.
Equivalently, a quasi-order is a well-quasi-order if and only if it has no infinite strictly decreasing sequence $x_1 \gneq x_2 \gneq x_3 \gneq \cdots$ and no infinite antichain.

For an arbitrary set~$M$, let~$M^*$ denote the set of finite sequences of elements of~$M$.
A quasi-order~$\leq$ on~$M$ defines a quasi-order~$\leq^*$ on~$M^*$ as follows:
$(a_1,\ldots,a_m) \leq^* (b_1,\ldots,b_n)$ if and only if there is a sequence of integers $i_1,\ldots,i_m$ with $1 \leq i_1<\cdots<i_m \leq n$ such that $a_j \leq b_{i_j}$ for $j \in \{1,\ldots,m\}$.
We call~$\leq^*$ the {\em subsequence relation}.

\begin{lemma}[Higman's Lemma~\cite{Higman52}]\label{lem:higman}
If $(M,\leq)$ is a well-quasi-order then ${(M^*,\leq^*)}$ is a well-quasi-order.
\end{lemma}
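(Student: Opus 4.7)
The plan is to use the Nash-Williams minimal bad sequence argument. Suppose for contradiction that $(M^*,\leq^*)$ is not a well-quasi-order. Then there exists an infinite \emph{bad} sequence $s_1,s_2,\ldots$ in $M^*$, that is, one with no pair $i<j$ satisfying $s_i \leq^* s_j$. Among all such bad sequences I would choose one that is \emph{minimal} in the following sense: having fixed $s_1,\ldots,s_{k-1}$, take $s_k$ of smallest possible length subject to $s_1,\ldots,s_k$ being extendable to an infinite bad sequence. This construction is well defined because lengths are non-negative integers, and it uses dependent choice.

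Each $s_k$ must be non-empty, since the empty sequence is $\leq^*$-below every sequence, so an empty $s_k$ would immediately give $s_k \leq^* s_{k+1}$. Hence I can write $s_k = a_k \cdot t_k$ where $a_k \in M$ is the first entry and $t_k \in M^*$ is the remaining suffix, with $|t_k| = |s_k|-1$. Applying the well-quasi-order assumption to $(a_k)_{k \geq 1}$, together with the standard fact that every infinite sequence in a wqo admits an infinite non-decreasing subsequence (otherwise one builds an infinite bad sequence in $M$), I extract indices $i_1 < i_2 < \cdots$ with $a_{i_1} \leq a_{i_2} \leq \cdots$.

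I would then form the new sequence $s_1,s_2,\ldots,s_{i_1-1},t_{i_1},t_{i_2},t_{i_3},\ldots$ and verify that it is still bad. Since its $i_1$-th entry $t_{i_1}$ is strictly shorter than $s_{i_1}$, this contradicts the minimal choice of $s_{i_1}$. Badness is verified by ruling out the three possible types of comparable pair: (i) a pair entirely inside the initial segment $s_1,\ldots,s_{i_1-1}$ is excluded by badness of $(s_k)$; (ii) a pair $s_j \leq^* t_{i_k}$ with $j < i_1$ would, by prepending $a_{i_k}$ to $t_{i_k}$, yield $s_j \leq^* s_{i_k}$, again contradicting badness of $(s_k)$; (iii) a pair $t_{i_k} \leq^* t_{i_l}$ with $k<l$ combines with $a_{i_k} \leq a_{i_l}$ to give $s_{i_k} = a_{i_k}\cdot t_{i_k} \leq^* a_{i_l}\cdot t_{i_l} = s_{i_l}$, contradicting badness of $(s_k)$ once more. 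The resulting contradiction proves the lemma.

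The main obstacle is setting up the minimal bad sequence cleanly and handling the indices in the three-case analysis at the end; there is no deep structural difficulty, but the argument is famously subtle because its naturality depends on dependent choice. The combinatorial heart of the proof is the pairing of heads $a_k$ via the wqo on $M$ with tails $t_k$ via the length-minimality of the bad sequence.
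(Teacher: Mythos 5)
Your proof is correct: this is the classical Nash--Williams minimal bad sequence argument, and the three-case verification of badness of the hybrid sequence $s_1,\ldots,s_{i_1-1},t_{i_1},t_{i_2},\ldots$, followed by the length contradiction at position $i_1$, is exactly how that argument is supposed to close. Note, though, that the paper does not prove this statement at all -- it is Higman's Lemma, quoted with a citation to Higman's 1952 paper and used as a black box (e.g.\ in Lemma~\ref{lem:slice-wqo} and for Cartesian products of wqos). So there is no ``paper proof'' to match: your argument is a genuinely self-contained alternative to simply invoking the literature, and it follows Nash--Williams' proof rather than Higman's original algebraic treatment of divisibility orders. The only step you pass over quickly is the extraction of an infinite non-decreasing subsequence of the heads $a_{i_1}\leq a_{i_2}\leq\cdots$; the parenthetical justification you give is terse but the fact is standard (either via Ramsey's theorem applied to the comparability colouring, or by observing that the set of indices admitting no later upper bound must be finite in a wqo), and reflexivity of the quasi-order, which your case (ii) implicitly uses when embedding $t_{i_k}$ into $s_{i_k}$, is guaranteed by the definition of quasi-order in the paper. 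As you say, the construction of the minimal bad sequence uses dependent choice, which is the accepted cost of this proof.
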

To define the notion of labelled induced subgraphs, let us consider an arbitrary quasi-order $(W,\leq)$.
We say that~$G$ is a {\em labelled} graph if each vertex~$v$ of~$G$ is equipped with an element $l_G(v)\in W$ (the {\em label} of~$v$).
Given two labelled graphs~$G$ and~$H$, we say that~$G$ is a {\em labelled induced subgraph} of~$H$ if~$G$ is isomorphic to an induced subgraph of~$H$ and there is an isomorphism that maps each vertex~$v$ of~$G$ to a vertex~$w$ of~$H$ with $l_G(v)\leq l_H(w)$. 
Clearly, if $(W,\leq)$ is a well-quasi-order, then a graph class~$X$ cannot contain an infinite sequence of labelled graphs that is strictly-decreasing with respect to the labelled induced subgraph relation.
We therefore say that a graph class~$X$ is well-quasi-ordered by the {\em labelled} induced subgraph relation if it contains no infinite antichains of labelled graphs whenever $(W,\leq)$ is a {\em well}-quasi-order.
Such a class is readily seen to also be well-quasi-ordered by the induced subgraph relation.

Daligault, Rao and Thomass{\'e}~\cite{DRT10} showed that every hereditary class of graphs that is well-quasi-ordered by the labelled induced subgraph relation is defined by a finite set of forbidden induced subgraphs.
Korpelainen, Lozin and Razgon~\cite{KLR13} conjectured that if a hereditary class of graphs~${\cal G}$ is defined by a finite set of forbidden induced subgraphs, then~${\cal G}$ is well-quasi-ordered by the induced subgraph relation if and only if it is well-quasi-ordered by the labelled induced subgraph relation.
Brignall, Engen and Vatter~\cite{BEV18} recently found a class~${\cal G}^*$ with 14 forbidden induced subgraphs that is a counterexample for this conjecture, that is ${\cal G}^*$ is well-quasi-ordered by the induced subgraph relation but not by the labelled induced subgraph relation.  However, so far, all known results for bigenic graph classes, including those in this paper, verify the conjecture for bigenic graph classes.

Similarly to the notion of preserving boundedness of clique-width, we say that a graph operation~$\gamma$ {\em preserves} well-quasi-orderability by
the labelled induced subgraph relation if for any finite constant~$k$ and any graph class~${\cal G}$, any graph class~${\cal G}'$ that is $(k,\gamma)$-obtained from~${\cal G}$ is well-quasi-ordered by this relation if and only if~${\cal G}$ is.

\begin{lemma}\cite{DLP16}\label{lem:lwqo-operations}
The following operations preserve well-quasi-orderability by the labelled induced subgraph relation:
\begin{enumerate}[(i)]
\renewcommand{\theenumi}{\thelemma.(\roman{enumi})}
\renewcommand{\labelenumi}{(\roman{enumi})}
\item\label{lem:subgraph-complementation} Subgraph complementation,
\item\label{lem:bipartite-complementation} Bipartite complementation and
\item\label{lem:adding-vertices} Vertex deletion.
\end{enumerate}
\end{lemma}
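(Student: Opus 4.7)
The plan is to prove each of the three operations separately, and in each case both directions of the biconditional, using a common device: encode the transformation data -- which operation each vertex participates in and in what role -- as additional coordinates in the vertex labels, chosen from a WQO. Since finite sets with the equality relation are WQOs and finite products of WQOs are WQOs, each enriched label set I construct will be a WQO, so the lWQO hypothesis in one class transfers to the other.

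For subgraph complementation, which is involutive, one direction suffices; I treat ${\cal G}$ lWQO $\Rightarrow {\cal G}'$ lWQO. Given a labelled sequence $G_1,G_2,\ldots$ in ${\cal G}'$ with labels in $(W,\leq_W)$, I lift each $G_i$ to some $H_i\in{\cal G}$ and pad the list of complementation sets $S_i^1,\ldots,S_i^k$ with empty sets. I then enrich the label of each $v\in V(H_i)$ with the indicator vector in $\{0,1\}^k$ recording which sets contain $v$, ordered by equality. The lWQO of ${\cal G}$ yields $i<j$ and an embedding $\phi\colon H_i\to H_j$ for which equality on indicator vectors forces the parity of $|\{h : u,v\in S_i^h\}|$ to match that of $|\{h : \phi(u),\phi(v)\in S_j^h\}|$ for every pair $u,v$, so edge preservation in $H_j$ implies edge preservation in $G_j$. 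Bipartite complementation is handled identically with $\{0,1,2\}^k$ in place of $\{0,1\}^k$, distinguishing the two sides of each operation.

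For vertex deletion the argument is asymmetric. The direction ${\cal G}$ lWQO $\Rightarrow {\cal G}'$ lWQO is easy: if $G_i$ comes from $H_i$ by deleting a set $D_i\subseteq V(H_i)$, I tag each vertex of $H_i$ as kept or deleted and order tags by equality, so that any embedding $H_i\to H_j$ must map kept vertices to kept vertices, giving the required embedding $G_i\to G_j$. The reverse direction is the main obstacle, because the deleted set $D_i$, of size at most $k$, is lost in $G_i$ and must be reconstructed from labels on $V(G_i)$ alone. I fix an arbitrary enumeration $D_i=(d_1^i,\ldots,d_k^i)$, padding with dummy isolated vertices if $|D_i|<k$, and attach to each $v\in V(G_i)$ the tuple $(l_i(v),A_i,L_i,\rho_i(v))$, where $A_i\in\{0,1\}^{\binom{k}{2}}$ is the adjacency matrix of $H_i[D_i]$, $L_i\in W^k$ is the label sequence along the enumeration, and $\rho_i(v)\in\{0,1\}^k$ records which $d_m^i$ are neighbours of $v$ in $H_i$. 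I order these tuples by $\leq_W$ on the first coordinate, equality on $A_i$ and on $\rho_i(v)$, and componentwise $\leq_W$ on $L_i$, obtaining a WQO.

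The key design choice in the last step is to require \emph{equality} on the structural coordinates $A$ and $\rho$: together with the $L$-comparison this forces the ordering-preserving bijection $\psi(d_m^i)=d_m^j$ to be a labelled isomorphism $H_i[D_i]\to H_j[D_j]$ and to realise, uniformly in $v$, the adjacencies of $v$ to $D_i$ as the adjacencies of $\phi(v)$ to $D_j$. Gluing then gives $\Phi=\phi\cup\psi\colon V(H_i)\to V(H_j)$, a labelled-induced-subgraph embedding, completing the reverse direction for vertex deletion. The main conceptual hurdle throughout is precisely this point -- ensuring that per-vertex information about $D_i$ carried in the labels glues into a single consistent isomorphism $D_i\to D_j$ -- and it is resolved by the equality requirement on structural coordinates.
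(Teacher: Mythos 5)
Your overall strategy -- encode the data of the at most $k$ operations into extra label coordinates drawn from a WQO, invoke labelled-WQO of the other class, and read the original adjacencies back off via parity (for the two complementation operations) or via gluing (for vertex deletion) -- is exactly the idea behind the proof in the cited paper [DLP16] (this paper itself only cites the lemma), and your treatment of subgraph complementation, bipartite complementation, and the easy direction of vertex deletion is sound, including the appeal to involutivity for the converse of the complementation cases. The one place where your write-up, taken literally, breaks is the converse direction for vertex deletion, in the padding step. If $|D_i|<k$ you pad the enumeration with ``dummy isolated vertices'', but nothing in your chosen coordinates distinguishes a dummy position from a position occupied by a genuine deleted vertex that happens to be isolated in $H_i$ (its row of $A_i$ and its column of $\rho_i(\cdot)$ are all zero in both cases, and its $L$-entry is just some element of $W$). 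Consequently the glued map $\Phi=\phi\cup\psi$ can send a real vertex $d_m^i\in V(H_i)$ to a phantom vertex that does not exist in $H_j$, so you only get an embedding of $H_i$ into $H_j$ plus extra isolated vertices, not into $H_j$. Concretely: with $W=\{a,b\}$ a two-element antichain, $H_i=2P_1$ labelled $a,a$ with $D_i$ one of its vertices, and $H_j=2P_1$ labelled $a,b$ with $D_j=\emptyset$ (dummy in position $1$, given label $a$), all your enriched labels match, yet $H_i$ is not a labelled induced subgraph of $H_j$. A second, smaller omission is the case $V(G_i)=\emptyset$ (all of $H_i$ deleted), where there are no vertices of $G_i$ to carry the encoded information at all.

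Both defects are repairable without changing the architecture. For the padding, adjoin a fresh symbol $\star$, comparable only to itself, and use it to fill the $L$-entries, the rows and columns of $A$, and the $\rho$-coordinates at unused positions (equivalently, enumerate the real deleted vertices first and add $|D_i|$ as a further equality-ordered coordinate); then comparability of the enriched labels forces the real positions of $i$ and $j$ to coincide, so $\psi$ maps $D_i$ into $D_j$ and the gluing argument goes through as you intend. For the empty-remainder case, restrict attention to the subsequence of graphs $H_i$ with at most $k$ vertices: by pigeonhole infinitely many share the same underlying unlabelled graph, and their label tuples lie in a finite power of $(W,\leq_W)$, which is a WQO, so an increasing pair exists directly. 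With these two patches your proof is complete and follows essentially the same route as the proof of the cited lemma.
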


\begin{sloppypar}
\begin{lemma}[\cite{AL15}]\label{lem:prime-wqo}
A hereditary class~$X$ of graphs is well-quasi-ordered by the labelled induced subgraph relation if and only if the set of prime graphs in~$X$ is.
In particular, $X$ is well-quasi-ordered by the labelled induced subgraph relation if and only if the set of connected graphs in~$X$ is.  
\end{lemma}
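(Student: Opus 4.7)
The forward implication is immediate: the class of prime (respectively, connected) graphs in $X$ is a subclass of $X$, so any infinite labelled antichain there would also be an infinite labelled antichain in $X$.

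For the converse direction with connected graphs, I would apply Higman's Lemma (Lemma~\ref{lem:higman}) directly. Given any infinite sequence $G_1, G_2, \ldots$ of graphs in $X$ labelled from a well-quasi-order $(W,\leq)$, write each $G_i$ as the finite sequence of its connected components, each of which is a labelled connected graph in $X$. By hypothesis these connected labelled graphs form a well-quasi-order, so Higman's Lemma yields $i<j$ for which the components of $G_i$ labelled-embed, in order, into distinct components of $G_j$; the union of these embeddings is a labelled induced subgraph embedding $G_i \hookrightarrow G_j$.

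For the prime statement, my plan is to combine modular decomposition with a Nash--Williams minimal bad sequence argument. Assume for contradiction that $X$ is not well-quasi-ordered by the labelled induced subgraph relation, and pick a minimal bad sequence $G_1, G_2, \ldots$, minimising $|V(G_i)|$ at each step. By modular decomposition, each $G_i$ is either a single vertex, a disjoint union, a join, or an ``inflation'' $P_i[M_v : v\in V(P_i)]$ of a prime graph $P_i$ obtained by substituting a maximal strong module $M_v \in X$ for each vertex $v\in V(P_i)$. The standard Nash--Williams argument (using minimality) shows that the set of all proper induced subgraphs of the $G_i$ is well-quasi-ordered by the labelled induced subgraph relation; in particular the collection of modules $\{M_v\}$ across all $i$ is. The disjoint-union and join cases then reduce to the connected statement above via Higman's Lemma on the components of $G_i$ or $\overline{G_i}$. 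For the prime-inflation case, each quotient $P_i$ lies in $X$ (it is an induced subgraph of $G_i$, obtained by picking one vertex from each module), so by hypothesis the sequence $(P_i)$ is labelled-wqo with respect to any well-quasi-order of labels; I enrich the label of each vertex $v\in V(P_i)$ with the labelled isomorphism type of its module $M_v$, drawn from the wqo of modules above. A labelled embedding $P_i \hookrightarrow P_j$ respecting these enriched labels then unpacks, module by module, to a labelled induced subgraph embedding $G_i \hookrightarrow G_j$, since adjacencies between distinct modules in each $G_k$ are dictated by the edges of $P_k$. This contradicts badness. The main subtlety lies in ensuring that the module-based enrichment of labels on primes itself yields a well-quasi-order, which is exactly what the minimality of the chosen bad sequence supplies.
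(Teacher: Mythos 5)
This lemma is not proved in the paper at all: it is imported from~\cite{AL15}, so there is no in-paper argument to compare against, and your proposal has to stand on its own. On its own it is essentially correct, and it follows the classical route for such statements (modular decomposition plus a Nash--Williams minimal bad sequence, with Higman's Lemma handling the parallel/series nodes and the prime hypothesis handling the prime quotients, vertices relabelled by the labelled isomorphism types of their maximal strong modules). The one place to tighten is the sentence claiming that the disjoint-union and join cases ``reduce to the connected statement above'': under the hypothesis of the prime version you do not know that the connected graphs in~$X$ are labelled well-quasi-ordered, so you cannot literally invoke that statement. What saves you is exactly what you established in the preceding sentence: the components of a disconnected~$G_i$ (respectively the co-components of a join) are proper induced subgraphs of~$G_i$, hence lie in the set that the minimal-bad-sequence argument shows to be well-quasi-ordered, and then the Higman argument from your proof of the connected case applies verbatim (for joins, after observing that all edges between distinct co-components are present, so the componentwise embeddings again assemble into an induced embedding). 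With that phrasing fixed, the unpacking step in the prime case is fine: maximal strong modules partition the vertex set, adjacency between distinct modules is uniform and given by the quotient, so an embedding $\phi\colon P_i\rightarrow P_j$ with $M_v$ a labelled induced subgraph of $M_{\phi(v)}$ for every~$v$ yields a labelled induced embedding of $G_i$ into $G_j$, contradicting badness; and the second assertion of the lemma follows either from your direct Higman argument or from the prime version, since prime graphs on at least three vertices are connected and the labelled graphs on at most two vertices cannot form an infinite antichain over a well-quasi-ordered label set.
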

\begin{lemma}[\cite{AL15,KV11}]\label{lem:P7S123-free-bip-wqo}
$(P_7,S_{1,2,3})$-free bipartite graphs are well-quasi-ordered by the labelled induced subgraph relation.
\end{lemma}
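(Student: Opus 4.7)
By Lemma~\ref{lem:prime-wqo} it suffices to handle connected $(P_7,S_{1,2,3})$-free bipartite graphs. In any such graph $G$, I would fix a vertex $r$ and consider the BFS layers $L_i=\{v:\mathrm{dist}_G(v,r)=i\}$. Since a shortest path is always induced and $G$ is $P_7$-free, only the layers $L_0,L_1,\ldots,L_d$ with $d\leq 5$ can be non-empty, and bipartiteness forces each $L_i$ to sit inside one side of the bipartition. So every graph in the class carries a canonical layering into at most six parts that alternate across the bipartition.

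The next step is to show that the bipartite adjacency between two consecutive layers $L_i$ and $L_{i+1}$ is very restricted. Every vertex of $L_{i+1}$ has a neighbour in $L_i$, and every vertex of $L_i$ with $i\geq 1$ has a neighbour in $L_{i-1}$, so one can place an $S_{1,2,3}$ centre at a vertex of $L_i$ with one ``short'' leg going up into $L_{i-1}$ and two ``longer'' legs going down through $L_{i+1},L_{i+2},L_{i+3}$. The $S_{1,2,3}$-prohibition then forbids the simultaneous existence of long induced descending paths starting at two distinct down-neighbours of the same vertex of $L_i$. Combined with the six-layer bound, this should let me partition each $L_i$ into a bounded number of groups whose members share the same adjacency pattern to the other groups, up to deleting a bounded number of vertices and performing a bounded number of bipartite complementations, both of which preserve labelled well-quasi-orderability by Lemma~\ref{lem:lwqo-operations}.

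With such a layered decomposition into finitely many types of simple blocks, each labelled graph is encoded by a bounded-depth nested sequence whose atoms come from a well-quasi-ordered alphabet consisting of vertex labels paired with the finitely many block types. Iterated application of Higman's Lemma (Lemma~\ref{lem:higman}) then delivers the required labelled well-quasi-ordering.

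The main obstacle is the intermediate structural claim: after bounded local modifications, the bipartite subgraph between each pair $L_i,L_{i+1}$ must fall into a finite catalogue of well-quasi-orderable types, such as chain graphs, matchings, or their bipartite complements. One cannot invoke the present lemma recursively on this subgraph, since it may itself be an arbitrary $(P_7,S_{1,2,3})$-free bipartite graph; the resolution is to exploit the BFS context---the guaranteed neighbours in adjacent layers and the paths that extend back to $r$ through earlier layers---which adds enough rigidity beyond what $(P_7,S_{1,2,3})$-freeness alone provides to pin down the inter-layer structure, and this is where the technical work of~\cite{AL15,KV11} is really concentrated.
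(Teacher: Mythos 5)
This lemma is not proved in the paper at all: it is imported verbatim from~\cite{AL15,KV11}, where the real structural analysis lives. Your proposal, by your own admission, reproduces only the easy outer shell of such an argument and leaves the core unproven. The BFS set-up is fine ($P_7$-freeness does force at most six non-empty layers, each inside one side of the bipartition, and Lemmas~\ref{lem:prime-wqo}, \ref{lem:lwqo-operations} and~\ref{lem:higman} are the right bookkeeping tools), but the statement that carries all the weight --- that after a bounded number of vertex deletions and bipartite complementations the bipartite graph between consecutive layers falls into a finite catalogue of well-quasi-orderable types (chain graphs, matchings, complements thereof) --- is asserted, not proved. That claim is precisely the content of the cited papers, so what you have written is a plan for a proof rather than a proof.

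Moreover, the one concrete mechanism you offer for extracting this structure is shaky. Note first that $P_7$-freeness alone buys you essentially nothing beyond the layer bound: $P_7$-free bipartite graphs are \emph{not} well-quasi-ordered (they contain the $2P_3$-free obstruction antichains; cf.\ Theorem~\ref{t-main0}), so every genuine restriction must come from $S_{1,2,3}$-freeness. Your proposed way of using it --- planting the centre of an $S_{1,2,3}$ at a vertex of $L_i$ with one leg up to $L_{i-1}$ and two legs descending through $L_{i+1},L_{i+2},L_{i+3}$ --- presupposes long induced descending paths, but vertices of $L_{i+1}$ need not have any neighbour in $L_{i+2}$, and in graphs of small diameter (where only $L_0,L_1,L_2$ are non-empty) no such descending legs exist at all, yet the inter-layer bipartite graphs are still completely unconstrained by this argument. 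The legs of a potential $S_{1,2,3}$ can also run sideways within the levels rather than down the layering, which your sketch ignores. So the step from ``$S_{1,2,3}$-free'' to ``bounded number of groups with uniform adjacency between layers'' has no actual argument behind it; closing that gap is the hard technical work of~\cite{AL15,KV11}, and without it the final Higman-type encoding has nothing to encode.
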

Let~$(L_1,\leq_1)$ and~$(L_2,\leq_2)$ be well-quasi-orders.
We define the {\em Cartesian Product} $(L_1,\leq_1\nobreak) \times (L_2,\leq_2)$ of these well-quasi-orders as the order $(L,\leq_L)$ on the set $L:=L_1\times L_2$ where $(l_1,l_2) \leq_L (l'_1,l'_2)$ if and only if $l_1 \leq_1 l'_1$ and $l_2 \leq_2 l'_2$.
Lemma~\ref{lem:higman} implies that $(L,\leq_L)$ is also a well-quasi-order.
If~$G$ has a labelling with elements of~$L_1$ and of~$L_2$, say $l_1:V(G) \rightarrow L_1$ and $l_2:V(G) \rightarrow L_2$, we can construct the {\em combined labelling} in $(L_1,\leq_1) \times (L_2,\leq_2)$ that labels each vertex~$v$ of~$G$ with the label $(l_1(v),l_2(v))$.
\end{sloppypar}

\begin{lemma}\label{lem:special-labels}
Fix a well-quasi-order~$(L_1,\leq_1)$ that has at least one element.
Let~$X$ be a class of graphs.
For each $G \in X$ fix a labelling $l^1_G: V(G) \rightarrow L_1$.
Then~$X$ is well-quasi-ordered by the labelled induced subgraph relation if and only if for every well-quasi-order~$(L_2,\leq_2)$ and every labelling of the graphs in~$X$ by this order, the combined labelling in $(L_1,\leq_1) \times (L_2,\leq_2)$ obtained from these labellings also results in a well-quasi-ordered set of labelled graphs.
\end{lemma}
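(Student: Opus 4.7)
The plan is to observe that both directions follow almost immediately from the definition of \emph{well-quasi-ordered by the labelled induced subgraph relation}, which (as set out just above) quantifies universally over every well-quasi-order $(W,\leq_W)$ and every labelling $l_G : V(G) \to W$.

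For the forward direction, I would take an arbitrary well-quasi-order $(L_2,\leq_2)$ together with an arbitrary $L_2$-labelling of the graphs in $X$, and combine it with the fixed labelling $l^1_G$ to obtain a labelling by $(L_1,\leq_1) \times (L_2,\leq_2)$. The paragraph immediately preceding the lemma already records, via Higman's Lemma~\ref{lem:higman}, that this Cartesian product is itself a well-quasi-order, so the combined labelling is a particular instance of a labelling by a well-quasi-order. Hence well-quasi-orderability of $X$ under the labelled induced subgraph relation (which holds for every choice of well-quasi-ordered label set) yields well-quasi-orderability of $X$ under this combined labelling at once.

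For the backward direction, I would fix an arbitrary well-quasi-order $(W,\leq_W)$ and an arbitrary $W$-labelling $l_G : V(G) \to W$ of graphs in $X$, and show that no infinite antichain can exist. Applying the hypothesis with $(L_2,\leq_2) := (W,\leq_W)$ and this same labelling, I obtain that $X$ equipped with the combined labels $(l^1_G(v),l_G(v)) \in L_1 \times W$ is well-quasi-ordered. The key observation is that the product order projects coordinate-wise: if an isomorphism between induced subgraphs witnesses a labelled-induced-subgraph relation for the combined labels, then reading off only the second coordinate shows the same isomorphism also witnesses one for the $W$-labels alone. Consequently, any would-be infinite antichain of $W$-labelled graphs would also be an infinite antichain under the combined labelling, contradicting the hypothesis. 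There is no real obstacle to overcome here; the proof is essentially definitional bookkeeping, and the value of the lemma lies in the device it provides, namely that one may freely enrich any given labelling by a fixed ``side labelling'' $l^1_G$ without affecting the question of well-quasi-orderability.
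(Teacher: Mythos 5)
Your proposal is correct and follows essentially the same route as the paper: the forward direction is immediate because the Cartesian product of two well-quasi-orders is a well-quasi-order (Higman's Lemma), and the backward direction rests on the same projection observation, namely that an infinite antichain of $L_2$-labelled graphs remains an antichain after each label $l$ is enriched to $(l^1_G(v),l)$, since any embedding respecting the combined labels would in particular respect the second coordinate. The paper phrases this as a contrapositive while you argue by contradiction, but the content is identical.
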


\begin{proof}
If~$X$ is well-quasi-ordered by the labelled induced subgraph relation then by definition it is well-quasi-ordered when labelled with labels from these combined labellings obtained from these well-quasi-orders.
If~$X$ is not well-quasi-ordered by the labelled induced subgraph relation then there must be a well-quasi-order $(L_2,\leq_2)$ and an infinite set of graphs $G_1,G_2,\ldots$ whose vertices are labelled with elements of~$L_2$ such that these graphs form an infinite labelled antichain.
For each graph~$G_i$, replace the label~$l$ on vertex~$v$ by $(l^1_{G_1}(v),l)$.
The graphs are now labelled with elements of the well-quasi-order $(L_1,\leq_1) \times (L_2,\leq_2)$ and result in an infinite labelled antichain of graphs labelled with such combined labellings.
This completes the proof.\qed
\end{proof}

\subsection{$k$-uniform Graphs}
For an integer $k\geq 1$, a graph~$G$ is $k$-{\em uniform} if there is a symmetric square $0,1$ matrix~$K$ of order~$k$ and a graph~$F_k$ on vertices
$1, 2, \ldots , k$ such that $G\in {\cal P}(K, F_k)$, where ${\cal P}(K, F_k)$ is the graph class defined as follows.
Let~$H$ be the disjoint union of infinitely many copies of~$F_k$.
For $i = 1,\ldots,k$, let~$V_i$ be the subset of~$V(H)$ containing vertex~$i$ from each copy of~$F_k$. Construct from~$H$ an infinite graph~$H(K)$ on the same vertex set by applying a subgraph complementation to~$V_i$ if and only if $K(i,i)=1$ and by applying a  bipartite complementation to a pair $V_i,V_j$ if and only if $K(i,j)=1$.
Thus, two vertices $u\in V_i$ and $v\in V_j$ are adjacent in~$H(K)$ if and only if $uv \in E(H)$ and $K(i, j) = 0$ or $uv \notin E(H)$ and $K(i, j) = 1$. Then, ${\cal P}(K, F_k)$ is the hereditary class consisting of all the finite induced subgraphs of~$H(K)$.
The minimum~$k$ such that~$G$ is $k$-uniform is the {\em uniformicity} of~$G$. 
The second of the next two lemmas follows directly from the above definitions.

The following result was proved by Korpelainen and Lozin. 
The class of disjoint unions of cliques is a counterexample for the reverse implication.

\begin{lemma}[\cite{KL11}]\label{lem:uniform-wqo}
Any class of graphs of bounded uniformicity is well-quasi-ordered by the labelled induced subgraph relation.
\end{lemma}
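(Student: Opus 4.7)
The plan is to cover the class with finitely many ``uniform patterns'' and then, for each pattern, strip off the bounded number of complementations used in the definition to reduce to a very simple base class. Fix $k$. Every graph of uniformicity at most $k$ lies in ${\cal P}(K,F_{k'})$ for some $k'\leq k$, some symmetric $0,1$-matrix $K$ of order $k'$, and some graph $F_{k'}$ on $k'$ vertices; there are only finitely many such triples. A finite union of classes that are well-quasi-ordered by the labelled induced subgraph relation is well-quasi-ordered (by pigeonhole applied to any infinite antichain), so it suffices to show that ${\cal P}(K,F_{k'})$ is well-quasi-ordered for a fixed triple.

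Let ${\cal H}$ denote the class of finite induced subgraphs of $H$, where $H$ is the disjoint union of infinitely many copies of $F_{k'}$. For every $G\in{\cal P}(K,F_{k'})$ with vertex set $U\subseteq V(H)=V(H(K))$, the graph $H(K)[U]$ is obtained from $H[U]\in{\cal H}$ by performing at most $k'$ subgraph complementations (one on each finite set $V_i\cap U$ with $K(i,i)=1$) and at most $\binom{k'}{2}$ bipartite complementations (one on each pair $(V_i\cap U,V_j\cap U)$ with $K(i,j)=1$). Iteratively invoking Lemma~\ref{lem:lwqo-operations}(i) and (ii) at most $k'+\binom{k'}{2}$ times, the class of all graphs obtained from ${\cal H}$ by at most that many such complementations is well-quasi-ordered whenever ${\cal H}$ is, and ${\cal P}(K,F_{k'})$ is a subclass of it. Since a subclass of a well-quasi-ordered class is well-quasi-ordered, it remains to prove that ${\cal H}$ is well-quasi-ordered by the labelled induced subgraph relation.

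Since ${\cal H}$ is hereditary, Lemma~\ref{lem:prime-wqo} reduces this to showing that the connected graphs in ${\cal H}$ form a well-quasi-ordered set. A connected graph in ${\cal H}$ lies entirely inside a single copy of $F_{k'}$ (as distinct copies are anti-complete in $H$) and is therefore a connected induced subgraph of $F_{k'}$; there are only finitely many such graphs up to isomorphism, all with at most $k'$ vertices. For any well-quasi-order $(W,\leq)$ of labels, an infinite sequence of such labelled graphs contains, by pigeonhole, an infinite subsequence with a common underlying isomorphism type $T$. The labellings of $T$ then live in $W^{|V(T)|}$, which is well-quasi-ordered by the coordinatewise product order (a finite product of well-quasi-orders being a well-quasi-order, by Lemma~\ref{lem:higman} or an easy induction), so two of them are comparable coordinatewise, and this comparison witnesses a labelled induced subgraph embedding via the identity on $T$.

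The only step requiring real care is the reduction from ${\cal P}(K,F_{k'})$ to ${\cal H}$: one must verify that the complementations used in the definition of $H(K)$ commute with restriction to a finite vertex set $U$ and that their total number is bounded by a function of $k$ alone, so that Lemma~\ref{lem:lwqo-operations} can be applied a fixed number of times. Granted this, the remainder is a routine combination of Lemma~\ref{lem:prime-wqo}, pigeonhole on the finite set of isomorphism types in $F_{k'}$, and a finite-product well-quasi-ordering argument.
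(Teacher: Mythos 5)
Your argument is correct, but note that the paper itself offers no proof to compare against: Lemma~\ref{lem:uniform-wqo} is quoted from Korpelainen and Lozin~\cite{KL11} as a black box. Your route is a legitimate self-contained derivation from the other tools the paper does state. The reduction is sound: for fixed $k'\leq k$, $K$ and $F_{k'}$ (finitely many triples, and a finite union of classes that are well-quasi-ordered by the labelled relation is again well-quasi-ordered), every $H(K)[U]$ is obtained from $H[U]$ by at most $k'$ subgraph complementations on the sets $V_i\cap U$ and at most $\binom{k'}{2}$ bipartite complementations on the pairs $(V_i\cap U,V_j\cap U)$, a bound depending on $k$ only; restriction to $U$ indeed commutes with these flips. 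The only formal care needed, which you acknowledge, is that Lemma~\ref{lem:lwqo-operations} handles one operation type at a time, so one should chain two intermediate classes (first all graphs obtainable by at most $k'$ subgraph complementations, then by at most $\binom{k'}{2}$ bipartite complementations) and use that a subclass of a class that is well-quasi-ordered by the labelled relation is itself well-quasi-ordered. The base case is also fine: ${\cal H}$ is hereditary, its connected members are connected induced subgraphs of $F_{k'}$ and hence have at most $k'$ vertices, and bounded-size labelled graphs are well-quasi-ordered by pigeonhole on isomorphism type plus the fact that a finite power of a well-quasi-order is a well-quasi-order, after which Lemma~\ref{lem:prime-wqo} lifts this to all of ${\cal H}$. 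By contrast, the original proof in~\cite{KL11} works directly: a graph in ${\cal P}(K,F_k)$ is encoded by the sequence of labelled induced subgraphs of $F_k$ it selects from the successive copies, the edges between copies being forced by $K$, and Higman's Lemma (Lemma~\ref{lem:higman}) on words over that finite (labelled) alphabet gives the result in one step. Your version trades that explicit encoding for the complementation-preservation and prime-graph machinery already available in the paper; both are valid, and yours has the advantage of reusing stated lemmas, at the cost of invoking the comparatively heavy Lemma~\ref{lem:lwqo-operations}.
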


The following lemma follows directly from the definition of clique-width and the definition of $k$-uniform graphs (see also~\cite{ALR09} for a more general result).

\begin{lemma}\label{lem:uniform-cw}
Every $k$-uniform graph has clique-width at most~$2k$.
\end{lemma}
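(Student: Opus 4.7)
The plan is to describe an explicit clique-width construction of an arbitrary finite induced subgraph $G$ of $H(K)$ that uses exactly $2k$ labels, thereby witnessing $\cw(G)\le 2k$. The labels split into two groups of size $k$: permanent labels $1,\dots,k$, where label $i$ is carried by every already-constructed vertex of $G$ lying in the partition class $V_i$, and scratch labels $k+1,\dots,2k$, where label $k+i$ is reserved for a vertex that belongs to $V_i$ and is being introduced as part of the current copy of $F_k$. The invariant to maintain is that between outer iterations, every vertex of the partial graph built so far has a permanent label matching its $V_i$-membership.

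First I would enumerate the copies of $F_k$ from which $G$ borrows at least one vertex as $C_1,\dots,C_n$, and for each $C_t$ let $S_t\subseteq\{1,\dots,k\}$ be the set of positions present. The construction processes the copies one at a time. To process $C_{t+1}$: (a) for each $i\in S_{t+1}$ introduce the corresponding vertex $v$ via the unary operation $(k+i)(v)$ and take its disjoint union with the partial graph built so far; (b) for each unordered pair $i,j\in S_{t+1}$ with $i\ne j$ apply $\eta_{k+i,k+j}$ exactly when the edge should exist in $H(K)$ between two vertices of the same copy at positions $i$ and $j$, i.e., when $ij\in E(F_k)$ XOR $K(i,j)=1$; (c) for each $i\in S_{t+1}$ and each $j\in\{1,\dots,k\}$ with $K(i,j)=1$, apply $\eta_{k+i,j}$ (allowing $i=j$, which handles the diagonal of $K$); (d) finally apply $\rho_{k+i\to i}$ for every $i\in S_{t+1}$, which restores the invariant before the next copy is processed.

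The correctness check consists of verifying that each of the $\eta$ operations is ``scoped'' correctly by the labels. In step (b) only the newly added vertices of $C_{t+1}$ can be affected, since the previously built part of the graph uses only labels $1,\dots,k$; this realises precisely the same-copy adjacency rule of $H(K)$. In step (c) the label $k+i$ is worn exclusively by the new $V_i$-vertex of $C_{t+1}$, while the permanent label $j$ is worn by all previously constructed vertices of $V_j$, which lie in copies $C_1,\dots,C_t$; because vertices from distinct copies are non-adjacent in $H$, they should be adjacent in $H(K)$ iff $K(i,j)=1$, matching the condition under which we apply $\eta_{k+i,j}$. After step (d) the invariant holds again. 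Since the construction only ever uses the $2k$ labels $\{1,\dots,2k\}$, we conclude $\cw(G)\le 2k$.

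There is no genuine obstacle: the lemma is, as advertised, essentially a direct consequence of the two definitions. The only subtle points are bookkeeping, namely checking that the two bits of information one needs to recover adjacency in $H(K)$ (which partition class a vertex belongs to, and whether two given vertices come from the same copy of $F_k$) are both tracked by the $2k$ labels, and that the scratch/permanent split keeps the $\eta$-operations from creating spurious edges to vertices built in previous iterations.
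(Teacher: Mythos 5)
Your construction is correct: the paper gives no explicit proof of this lemma, stating only that it follows directly from the definitions of clique-width and $k$-uniformity (and citing a more general result), and your two-group labelling --- $k$ permanent labels recording membership in the class $V_i$ and $k$ scratch labels for the vertices of the copy of $F_k$ currently being added --- is exactly the direct construction that remark leaves implicit. The scoping of the join operations (same-copy adjacencies via scratch--scratch joins governed by $E(F_k)$ XOR $K(i,j)$, cross-copy adjacencies via scratch--permanent joins governed by $K(i,j)$, including the diagonal) is handled correctly, so $\cw(G)\le 2k$ follows as claimed.
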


\section{Partitioning $3$-Partite Graphs}\label{sec:partitioning}

In Section~\ref{s-deco}, we first introduce a graph decomposition on $3$-partite graphs.
We then show how to extend results on bounded clique-width or well-quasi-orderability by the labelled induced subgraph relation from bipartite graphs in an arbitrary hereditary class of graphs to the $3$-partite graphs in this class that are decomposable in this way.
In Section~\ref{sec:curious}, we then give sufficient conditions for a $3$-partite graph to have such a decomposition.

\subsection{The Decomposition}\label{s-deco}

\begin{sloppypar}
Let~$G$ be a $3$-partite graph given with a partition of its vertex set into three independent sets $V_1$, $V_2$ and~$V_3$.
Suppose that each set~$V_i$ can be partitioned into sets~$V_i^0,\ldots,V_i^\ell$ such that, taking subscripts modulo~$3$:
\begin{itemize}
\item for $i \in \{1,2,3\}$ if $j<k$ then~$V_i^j$ is complete to~$V_{i+1}^k$ and anti-complete to~$V_{i+2}^k$.
\end{itemize}
For $i \in \{0,\ldots,\ell\}$ let $G^i = G[V_1^i \cup V_2^i \cup V_3^i]$.
We say that the graphs~$G^i$ are the {\em slices} of~$G$. If the slices belong to some graph class~$X$, then we say that~$G$ can be {\em partitioned into slices from}~$X$; see \figurename~\ref{fig:slices-order} for an example.
\end{sloppypar}

\begin{figure}[h!]
\begin{center}
\scalebox{1.0}{
\begin{tikzpicture}[scale=0.95]
\draw[rounded corners, color=black, thick] (-1,0) rectangle (1,8);
\draw[rounded corners, color=black, thick] (3,0) rectangle (5,8);
\draw[rounded corners, color=black, thick] (7,0) rectangle (9,8);

\draw[rounded corners, color=black, thick, dashed] (-0.5,0.5) rectangle (8.5,1.5);
\draw[rounded corners, color=black, thick, dashed] (-0.5,2.5) rectangle (8.5,3.5);
\draw[rounded corners, color=black, thick, dashed] (-0.5,4.5) rectangle (8.5,5.5);
\draw[rounded corners, color=black, thick, dashed] (-0.5,6.5) rectangle (8.5,7.5);

\draw (0,0) node [label={[label distance=-0pt]-90:$V_1$}] {};
\draw (4,0) node [label={[label distance=-0pt]-90:$V_2$}] {};
\draw (8,0) node [label={[label distance=-0pt]-90:$V_3$}] {};
\draw (-0.5,7) node [label={[label distance=-5.5pt]180:$G^0$}] {};
\draw (-0.5,5) node [label={[label distance=-5.5pt]180:$G^1$}] {};
\draw (-0.5,3) node [label={[label distance=-5.5pt]180:$G^2$}] {};
\draw (-0.5,1) node [label={[label distance=-5.5pt]180:$G^3$}] {};

\coordinate (x_1) at (0,1) ;
\coordinate (y_1) at (4,1.2) ;
\coordinate (z_1) at (8,1) ;
\coordinate (x_2) at (0,3) ;
\coordinate (y_2) at (4,3.2) ;
\coordinate (z_2) at (8,3) ;
\coordinate (x_3) at (0,5) ;
\coordinate (y_3) at (4,5.2) ;
\coordinate (z_3) at (8,5) ;
\coordinate (x_4) at (0,7) ;
\coordinate (y_4) at (4,7.2) ;
\coordinate (z_4) at (8,7) ;

\draw [fill=black] (x_1) circle (1.5pt) ;
\draw [fill=black] (y_1) circle (1.5pt) ;
\draw [fill=black] (z_1) circle (1.5pt) ;
\draw [fill=black] (x_2) circle (1.5pt) ;
\draw [fill=black] (y_2) circle (1.5pt) ;
\draw [fill=black] (z_2) circle (1.5pt) ;
\draw [fill=black] (x_3) circle (1.5pt) ;
\draw [fill=black] (y_3) circle (1.5pt) ;
\draw [fill=black] (z_3) circle (1.5pt) ;
\draw [fill=black] (x_4) circle (1.5pt) ;
\draw [fill=black] (y_4) circle (1.5pt) ;
\draw [fill=black] (z_4) circle (1.5pt) ;

\draw (x_1) -- (y_1) -- (z_1);
\draw (x_2) -- (y_2) -- (z_2);
\draw (x_3) -- (y_3) -- (z_3);
\draw (x_4) -- (y_4) -- (z_4);

\draw (x_4) -- (y_3);
\draw (x_4) -- (y_2);
\draw (x_4) -- (y_1);
\draw (x_3) -- (y_2);
\draw (x_3) -- (y_1);
\draw (x_2) -- (y_1);

\draw (y_4) -- (z_3);
\draw (y_4) -- (z_2);
\draw (y_4) -- (z_1);
\draw (y_3) -- (z_2);
\draw (y_3) -- (z_1);
\draw (y_2) -- (z_1);

\draw (z_4) -- (x_3);
\draw (z_4) -- (x_2);
\draw (z_4) -- (x_1);
\draw (z_3) -- (x_2);
\draw (z_3) -- (x_1);
\draw (z_2) -- (x_1);

\end{tikzpicture}
}
\end{center}
\caption{\label{fig:slices-order}A $3$-partite graph partitioned into slices $G^0,\ldots,G^3$ isomorphic to~$P_3$.}
\end{figure}
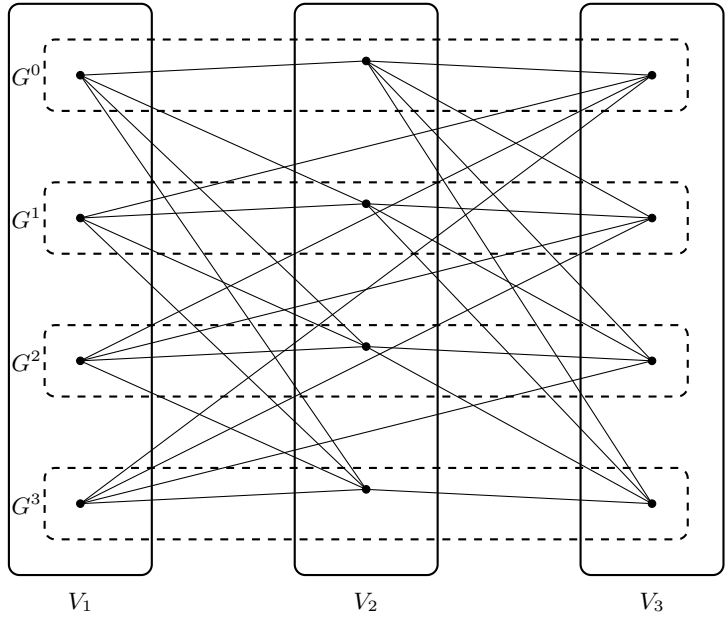

\begin{lemma}\label{lem:slice-cw}
If~$G$ is a $3$-partite graph that can be partitioned into slices of clique-width at most~$k$ then~$G$ has clique-width at most~$\max(3k,6)$.
\end{lemma}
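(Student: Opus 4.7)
The plan is to process the slices $G^\ell, G^{\ell-1}, \ldots, G^0$ in decreasing order of index, building the graph incrementally via the clique-width operations. The guiding invariant is that after processing slices $G^\ell, \ldots, G^{j+1}$, the currently built labelled graph uses only three labels $\alpha_1, \alpha_2, \alpha_3$, with every vertex of $V_c^{j'}$ (for $j' > j$) carrying the label $\alpha_c$. This invariant is ideal for attaching the next slice $G^j$: by definition of the decomposition, $V_c^j$ must be complete to $V_{c+1}^{j'}$ and anti-complete to $V_{c-1}^{j'}$ for every $j' > j$, so all new cross-slice edges are produced by exactly three $\eta$-operations, one per choice of $c$.

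The first technical step is what I call a \emph{parts-aware} construction of a single slice: I claim that $G^j$ admits a construction using $3k$ labels $(a, c)$ with $a \in \{1, \ldots, k\}$ and $c \in \{1, 2, 3\}$ such that, at the end, every vertex of $V_c^j$ carries a label of the form $(\cdot, c)$. To obtain this, take a witness $k$-label construction of $G^j$ and triple each label: whenever the original assigns label $a$ to a vertex $v$, assign $(a, c)$ where $c$ is $v$'s part. Each operation $\eta_{a, b}$ in the original is simulated by the operations $\eta_{(a, c), (b, c')}$ ranging over $c \neq c'$; the case $c = c'$ never arises because no original $\eta_{a, b}$ could have produced an intra-part edge, since each $V_c^j$ is an independent set.

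The inductive step then runs as follows. To add slice $G^j$: build it in parts-aware form using $3k$ fresh labels $(a, c)$; apply $\eta_{(a, c), \alpha_{c+1}}$ for every $a$ and $c$ (indices modulo $3$) to install the cross-slice edges; and finally apply $\rho_{(a, c) \to \alpha_c}$ to restore the three-label invariant. A straightforward count gives a peak of $3k + 3$ labels in simultaneous use; one may sharpen this to $\max(3k, 6)$ by identifying the three $\alpha_c$'s with three of the construction labels $(a, c)$ --- for $k \geq 2$ there is enough room to schedule the parts-aware construction so that the identified labels are only active at the end, and for $k = 1$ the bound $6$ is already attained by the naive count. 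The base case $\ell = 0$ is immediate: $G = G^0$ has clique-width at most $k \leq \max(3k, 6)$.

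The main obstacle is the parts-aware construction. The factor-of-$3$ blow-up from label tripling is essentially unavoidable, because in a generic $k$-label construction of $G^j$ a single label may be shared across different parts, yet we must distinguish parts in order to attach subsequent slices correctly. Once the parts-aware construction is in hand, the remainder is careful bookkeeping: verifying that the tripled operations reproduce the edges of $G^j$ faithfully, that the three join operations in the inductive step add exactly the prescribed cross-slice edges, and that the concluding relabellings restore the invariant within the claimed label budget.
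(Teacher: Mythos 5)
Your construction is in substance the paper's own: triple the labels of a width-$k$ expression of each slice so that part membership is encoded in the label (the paper does exactly this, replacing $i(v)$, $\rho$ and $\eta$ operations by their part-indexed versions), then attach the slices one at a time, using three cyclic join operations to install the cross-slice edges and a relabelling to restore a three-label invariant on the already-built portion. Your orientation (processing indices downwards and joining new part $c$ to old part $c+1$) is the mirror image of the paper's (processing upwards and joining old part $i$ to new part $i+1$); both match the decomposition correctly, and your observation that the same-part joins $\eta_{(a,c),(b,c)}$ can be omitted is fine (the paper includes them, where they are vacuous).

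The one place where your write-up does not yet prove the stated bound is the reduction from $3k+3$ to $\max(3k,6)$, and as sketched it would fail if executed literally. If you identify $\alpha_c$ with a construction label $(a_0,c)$, the danger is at the moment of the disjoint union and the joins, not during the slice's own subexpression: the join $\eta_{(a_0,c),\alpha_{c+1}}=\eta_{\alpha_c,\alpha_{c+1}}$ would make \emph{all} old part-$c$ vertices adjacent to \emph{all} old part-$(c+1)$ vertices, which is wrong (between two already-attached slices these parts are complete in one index direction and anti-complete in the other), and any new-slice vertex still carrying $\alpha_{c+1}$ would acquire spurious intra-slice edges. So the condition you need is the opposite of ``the identified labels are only active at the end'': at the union moment no vertex of the new slice may carry any of the three shared labels, and the joins $\eta_{\alpha_c,\alpha_{c+1}}$ must be dropped. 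For $k\ge 2$ this is easily arranged by a final within-slice relabelling $\rho_{(a_0,c)\rightarrow(a_1,c)}$, $a_1\neq a_0$; the paper avoids the issue more cleanly by normalising each slice so that every vertex of $V_i$ ends with the single label $1_i$, relabelling the newly attached slice to a second triple $2_i$ (drawn from the same $3k$-label pool when $k\ge 2$, and giving $6$ labels when $k=1$), joining $\eta_{1_i,2_{i+1}}$, and relabelling back. With that repair your argument gives exactly $\max(3k,6)$; without it, what you have fully justified is only the weaker bound $3k+3$.
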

\begin{proof}
Since every slice~$G^j$ of~$G$ has clique-width at most~$k$, it can be constructed using the labels $1,\ldots,k$.
Applying relabelling operations if necessary, we may assume that at the end of this construction, every vertex receives the label~$1$.
We can modify this construction so that we use the labels $1_1,\ldots,k_1,1_2,\ldots,k_2,1_3,\ldots,k_3$ instead, in such a way that at all points in the construction, for each $i \in \{1,2,3\}$ every constructed vertex in~$V_i$ has a label in $\{1_i,\ldots,k_i\}$.
To do this we replace:
\begin{sloppypar}
\begin{itemize}
\item creation operations~$i(v)$ by~$i_j(v)$ if $v \in V_j$,
\item relabel operations~$\rho_{j \rightarrow k}()$ by $\rho_{j_1 \rightarrow k_1}(\rho_{j_2 \rightarrow k_2}(\rho_{j_3 \rightarrow k_3}()))$ and
\item join operations~$\eta_{j,k}()$ by\\\hspace*{2em} $\eta_{j_1,k_1}(\eta_{j_1,k_2}(\eta_{j_1,k_3}(\eta_{j_2,k_1}(\eta_{j_2,k_2}(\eta_{j_2,k_3}(\eta_{j_3,k_1}(\eta_{j_3,k_2}(\eta_{j_3,k_3}()))))))))$.
\end{itemize}
\end{sloppypar}
\noindent
This modified construction uses~$3k$ labels and at the end of it, every vertex in~$V_i$ is labelled with label~$1_i$.
We may do this for every slice~$G^j$ of~$G$ independently.
We now show how to use these constructed slices to construct $G[V(G^0) \cup \cdots \cup V(G^j)]$ using six labels in such a way that every vertex in~$V_i$ is labelled with label~$1_i$.
We do this by induction.
If $j=0$ then $G[V(G^0)]=G^0$, so we are done.
If $j>0$ then by the induction hypothesis, we can construct $G[V(G^0) \cup \cdots \cup V(G^{j-1})]$ in this way.
Consider the copy of~$G^j$ constructed earlier and relabel its vertices using the operations $\rho_{1_1 \rightarrow 2_1}$, $\rho_{1_2 \rightarrow 2_2}$ and $\rho_{1_3 \rightarrow 2_3}$ so that in this copy of~$G^j$, every vertex in~$V_i$ is labelled~$2_i$.
Next take the disjoint union of the obtained graph with the constructed $G[V(G^0) \cup \cdots \cup V(G^{j-1})]$.
Then, apply join operations $\eta_{1_1,2_2}$, $\eta_{1_2,2_3}$ and~$\eta_{1_3,2_1}$.
Finally, apply the relabelling operations $\rho_{2_1 \rightarrow 1_1}$, $\rho_{2_2 \rightarrow 1_2}$ and $\rho_{2_3 \rightarrow 1_3}$.
This constructs $G[V(G^0) \cup \cdots \cup V(G^j)]$ in such a way that every vertex  in~$V_i$ is labelled with~$1_i$.
By induction, it follows that~$G$ has clique-width at most~$\max(3k,6)$.\qed
\end{proof}

\begin{lemma}\label{lem:slice-wqo}
Let~$X$ be a hereditary graph class containing a class~$Z$.
Let~$Y$ be the set of $3$-partite graphs in~$X$ that can be partitioned into slices from~$Z$.
If~$Z$ is well-quasi-ordered by the labelled induced subgraph relation then so is~$Y$.
\end{lemma}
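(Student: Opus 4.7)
The plan is to encode each graph $G \in Y$ as a finite sequence of its slices and appeal to Higman's Lemma, using Lemma~\ref{lem:special-labels} to absorb the part information.

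For each $G \in Y$ I would first fix a $3$-partition $V_1, V_2, V_3$ and a slice decomposition $G^0, \ldots, G^{\ell}$ with each $G^j \in Z$, and define $l^1_G: V(G) \rightarrow \{1,2,3\}$ by $l^1_G(v)=i$ iff $v \in V_i$, equipping $\{1,2,3\}$ with equality (a well-quasi-order on this finite set). By Lemma~\ref{lem:special-labels}, it then suffices to prove that for every well-quasi-order $(L_2, \leq_2)$ and every labelling $l^2_G$ of the graphs of $Y$ by $L_2$, the class $Y$ carrying the combined labelling in $(\{1,2,3\}, =) \times (L_2, \leq_2)$ is well-quasi-ordered by the labelled induced subgraph relation. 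Given any infinite sequence $G_1, G_2, \ldots$ of such labelled graphs, each slice $G_n^j$ inherits labels in the well-quasi-order $\{1,2,3\} \times L_2$, so by the hypothesis on $Z$ the collection of all such labelled slices (over all $n$ and $j$) is well-quasi-ordered.

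Higman's Lemma (Lemma~\ref{lem:higman}), applied to the sequence of slice-sequences $(G_n^0, \ldots, G_n^{\ell_n})_{n \geq 1}$, then yields indices $m<n$, a strictly increasing map $\phi:\{0,\ldots,\ell_m\} \rightarrow \{0,\ldots,\ell_n\}$, and label-preserving induced subgraph embeddings $\psi_j: V(G_m^j) \rightarrow V(G_n^{\phi(j)})$. The main step is verifying that $\psi := \bigcup_j \psi_j$ is a labelled induced subgraph embedding of $G_m$ into $G_n$. Labels (and in particular parts, since equality is used on $\{1,2,3\}$) are preserved vertex-wise by each $\psi_j$, and within-slice adjacencies are preserved by each $\psi_j$. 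Cross-slice adjacencies between $v \in V_i^a$ and $u \in V_{i'}^b$ of $G_m$ with $a \neq b$ depend, by the defining rules of the decomposition, only on the ordered pair $(i, i')$ and on whether $a < b$ or $a > b$; since $\phi$ is strictly increasing, this comparison is preserved, so the same rules produce the same adjacency between $\psi(v) \in V_i^{\phi(a)}$ and $\psi(u) \in V_{i'}^{\phi(b)}$ in $G_n$.

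The obstacle to watch is precisely this cross-slice bookkeeping: both the use of equality on the part labels (forcing $\psi$ to respect parts) and the strict monotonicity of $\phi$ (preserving the slice-index comparisons on which the complete/anti-complete rules depend) are essential. Once in hand, $\psi$ witnesses that the sequence was not an antichain, and Lemma~\ref{lem:special-labels} completes the proof.
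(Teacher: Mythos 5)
Your proposal is correct and follows essentially the same route as the paper's proof: fix the $3$-partition labels via Lemma~\ref{lem:special-labels}, encode each graph as the sequence of its labelled slices, and apply Higman's Lemma (Lemma~\ref{lem:higman}). The only difference is that you spell out the cross-slice adjacency verification (that adjacency between different slices depends only on the parts and the order of the slice indices, which a strictly increasing $\phi$ preserves), a step the paper leaves implicit with the remark that the slices and labellings completely describe the edges.
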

\begin{proof}
For each graph~$G$ in~$Y$, we may fix a partition into independent sets $(V_1,V_2,V_3)$ with respect to which the graph can be partitioned into slices from~$Z$.
Let $(L_1,\leq_1)$ be the well-quasi-order with $L_1=\{1,2,3\}$ in which every pair of distinct elements is incomparable.
By Lemma~\ref{lem:special-labels}, we need only consider labellings of graphs in~$G$ of the form $(i,l(v))$ where $v \in V_i$ and~$l(v)$ belongs to an arbitrary well-quasi-order~$L$.
Suppose~$G$ can be partitioned into slices $G^1,\ldots,G^k$, with vertices labelled as in~$G$.
The slices along with the labellings completely describe the edges in~$G$.
Suppose~$H$ is another such graph, partitioned into slices $H^1,\ldots,H^k$.
If $(H^1,\ldots,H^\ell)$ is smaller than $(G^1,\ldots,G^k)$ under the subsequence relation, then~$H$ is an induced subgraph of~$G$.
The result follows by Lemma~\ref{lem:higman}.\qed
\end{proof}

\subsection{Curious Graphs}\label{sec:curious}
Let~$G$ be a $3$-partite graph given together with a partition of its vertex set into three independent sets $V_1$, $V_2$ and~$V_3$.
An induced~$K_3$ or~$3P_1$ in~$G$ is {\em rainbow} if it has exactly one vertex in each set~$V_i$.
We  say that~$G$ is {\em curious with respect to the partition $(V_1,V_2,V_3)$} if it contains no rainbow~$K_3$ or~$3P_1$ when its vertex set is partitioned in this way.
We say that~$G$ is {\em curious} if there is a partition $(V_1,V_2,V_3)$ with respect to which~$G$ is curious.
In this section we will prove that given a hereditary class~$X$, if the bipartite graphs in~$X$ are well-quasi-ordered by the labelled induced subgraph relation or have bounded clique-width, then the same is true for the curious graphs in~$X$.

A linear order $(x_1,x_2,\ldots,x_k)$ of the vertices of an independent set~$I$ is
\begin{itemize}
\item {\em increasing} if $i<j$ implies $N(x_i)\subseteq N(x_j)$,
\item {\em decreasing} if $i<j$ implies $N(x_i)\supseteq N(x_j)$,
\item {\em monotone} if it is either increasing or decreasing.
\end{itemize}
Bipartite graphs that are $2P_2$-free are also known as bipartite {\em chain} graphs.
It is and it is well known (and easy to verify) that a bipartite graph~$G$ is $2P_2$-free if and only if the vertices in each independent set of the bipartition admit a monotone ordering.
Suppose~$G$ is a curious graph with respect to some partition $(V_1,V_2,V_3)$.
We say that (with respect to this partition) the graph~$G$ is a curious graph of
{\em type~$t$} if exactly~$t$ of the graphs $G[V_1\cup V_2]$, $G[V_1\cup V_3]$ and~$G[V_2\cup V_3]$ contain an induced~$2P_2$. 

\subsubsection{Curious Graphs of Type~$0$ and~$1$.}
Note that if~$G$ is a curious graph of type~$0$ or~$1$ with respect to the partition $(V_1,V_2,V_3)$ then without loss of generality, we may assume that $G[V_1\cup V_2]$ and $G[V_1\cup V_3]$ are both $2P_2$-free.

\begin{lemma}\label{lem:type01-lin-order}
Let~$G$ be a curious graph with respect to $(V_1,V_2,V_3)$, such that $G[V_1\cup\nobreak V_2]$ and $G[V_1\cup V_3]$ are both $2P_2$-free.
Then the vertices of~$V_1$ admit a linear ordering which is decreasing in $G[V_1\cup V_2]$ and increasing in $G[V_1\cup V_3]$.
\end{lemma}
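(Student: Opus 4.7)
The plan is to construct the desired linear order on $V_1$ by first showing that the two ``natural'' orderings (one coming from each of the two $2P_2$-free bipartite subgraphs) are mutually compatible. Throughout, write $N_i(v):=N(v)\cap V_i$ for $v\in V_1$ and $i\in\{2,3\}$.

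First I would invoke the well-known characterisation of $2P_2$-free bipartite graphs as bipartite chain graphs: the neighbourhoods of the vertices on one side of the bipartition form a chain under inclusion. Applied to $G[V_1\cup V_2]$ this says that for any $u,v\in V_1$ either $N_2(u)\subseteq N_2(v)$ or $N_2(v)\subseteq N_2(u)$; applied to $G[V_1\cup V_3]$ it gives the analogous statement for $N_3(u)$ and $N_3(v)$.

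The heart of the argument is the following compatibility claim: for all $u,v\in V_1$, if $N_2(u)\supsetneq N_2(v)$ then $N_3(u)\subseteq N_3(v)$. I would prove it by contradiction. Suppose there exist $w_2\in N_2(u)\setminus N_2(v)$ and $w_3\in N_3(u)\setminus N_3(v)$ and look at the rainbow triple $\{v,w_2,w_3\}$: neither $vw_2$ nor $vw_3$ is an edge, so if $w_2w_3\notin E(G)$ then this triple induces a rainbow $3P_1$, contradicting curiousness. Otherwise $w_2w_3\in E(G)$, and then the rainbow triple $\{u,w_2,w_3\}$ induces a $K_3$ (as $uw_2$ and $uw_3$ are edges by the choice of $w_2$ and $w_3$), again contradicting curiousness. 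This is the one substantive step; everything else is bookkeeping.

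To finish, I would assemble the order by sorting $V_1$ by $N_2$-inclusion in decreasing order and breaking ties (vertices with equal $N_2$-neighbourhood) by $N_3$-inclusion in increasing order; both sorts are well-defined by the chain property. Whenever $u$ precedes $v$ in the resulting sequence, either $N_2(u)\supsetneq N_2(v)$---in which case the compatibility claim forces $N_3(u)\subseteq N_3(v)$---or $N_2(u)=N_2(v)$ and $N_3(u)\subseteq N_3(v)$ by the tie-breaker. Hence the ordering is simultaneously decreasing in $G[V_1\cup V_2]$ and increasing in $G[V_1\cup V_3]$, as required.
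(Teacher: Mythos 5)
Your proposal is correct and follows essentially the same route as the paper: the paper likewise orders $V_1$ by decreasing $N_{V_2}$-neighbourhoods with ties broken by increasing $N_{V_3}$-neighbourhoods, and its contradiction argument (a rainbow $K_3$ via $x_i,y,z$ or a rainbow $3P_1$ via $x_j,y,z$) is exactly your compatibility claim in disguise. The only difference is presentational—you isolate the compatibility statement up front instead of deriving it while verifying the already-defined order.
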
 

\begin{proof}
For a set $S \subseteq V$, we use $N_S(u):=N(u) \cap S$ to denote the set of vertices in~$S$ that are adjacent to~$u$.
We may choose a linear order $x_1,\ldots,x_\ell$ of the vertices of~$V_1$ according to their neighbourhood in~$V_2$, breaking ties according to their neighbourhood in~$V_3$ i.e. an order such that:
\begin{enumerate}[(i)]
\renewcommand{\theenumi}{(\roman{enumi})}
\renewcommand{\labelenumi}{(\roman{enumi})}
\item \label{prop5:V2-nhd-differs}if $N_{V_2}(x_i) \supsetneq N_{V_2}(x_j)$ then $i <j$ and
\item \label{prop5:V2nhd-equal-V3-nhd-differs}if $N_{V_2}(x_i) = N_{V_2}(x_j)$ and $N_{V_3}(x_i) \subsetneq N_{V_3}(x_j)$ then $i<j$.
\end{enumerate}
Clearly such an ordering is decreasing in $G[V_1\cup V_2]$.

Suppose, for contradiction, that this order is not increasing in $G[V_1\cup V_3]$.
Then there must be indices $i<j$ such that $N_{V_3}(x_i) \supsetneq N_{V_3}(x_j)$.
Then $N_{V_2}(x_i) \neq N_{V_2}(x_j)$ by Property~\ref{prop5:V2nhd-equal-V3-nhd-differs}.
By Property~\ref{prop5:V2-nhd-differs} it follows that $N_{V_2}(x_i) \supsetneq N_{V_2}(x_j)$.
This means that there are vertices $y \in N_{V_2}(x_i) \setminus N_{V_2}(x_j)$ and $z \in N_{V_3}(x_i) \setminus N_{V_3}(x_j)$.
Now if~$y$ is adjacent to~$z$ then $G[x_i,y,z]$ is a rainbow~$K_3$ and if~$y$ is non-adjacent to~$z$ then $G[x_j,y,z]$ is a rainbow~$3P_1$.
This contradiction implies that the order is indeed decreasing in $G[V_1\cup V_3]$, which completes the proof.\qed
\end{proof}

\begin{sloppypar}
\begin{lemma}\label{lem:cur-type01-slice-bip}
If~$G$ is a curious graph of type~$0$ or~$1$ with respect to a partition $(V_1,V_2,V_3)$ then~$G$ can be partitioned into slices that are bipartite.
\end{lemma}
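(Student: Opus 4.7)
The plan is to build the slice decomposition directly from the monotone ordering of $V_1$ supplied by Lemma~\ref{lem:type01-lin-order}. Let $x_1,\ldots,x_\ell$ be an ordering of $V_1$ that is decreasing on $G[V_1\cup V_2]$ and increasing on $G[V_1\cup V_3]$. For each $y\in V_2$, let $t_2(y)$ be the largest index $i$ with $y$ adjacent to $x_i$, taking $t_2(y)=0$ if $y$ has no neighbour in $V_1$; analogously, for each $z\in V_3$, let $t_3(z)$ be the smallest such index, taking $t_3(z)=\ell+1$ when $z$ has no neighbour in $V_1$. Monotonicity of the order ensures that each of these single integers determines the full neighbourhood of the vertex in $V_1$, so they are natural candidates for slice indices.

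Concretely, I would set $V_1^0:=\emptyset$ and $V_1^i:=\{x_i\}$ for $1\le i\le \ell$; $V_2^i:=\{y\in V_2:t_2(y)=i\}$ for $0\le i\le \ell$; and $V_3^i:=\{z\in V_3:t_3(z)=i+1\}$ for $0\le i\le \ell$ (so that $V_3^\ell$ collects the vertices of $V_3$ with no neighbour in $V_1$). Four of the six required cross-slice constraints then follow immediately. For $j<k$, a vertex of $V_2^k$ has $t_2=k>j$ and is therefore adjacent to $x_j$, giving that $V_1^j$ is complete to $V_2^k$; a vertex of $V_3^k$ has $t_3=k+1>j$, giving that $V_1^j$ is anti-complete to $V_3^k$; and the dual statements that $V_2^j$ is anti-complete to $V_1^k$ and $V_3^j$ is complete to $V_1^k$ follow by the same calculation on $t_2$ and $t_3$.

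The main obstacle, and where the curious hypothesis is essential, will be the remaining two constraints between $V_2$ and $V_3$. The idea is that for $j<k$ the vertex $x_{j+1}$ exists (because $j+1\le k\le\ell$) and serves as a universal witness. If $y\in V_2^j$ and $z\in V_3^k$, both are non-adjacent to $x_{j+1}$ (for $y$ because $t_2(y)=j$; for $z$ because $t_3(z)\ge k+1\ge j+2$), so a non-edge $yz$ would produce the rainbow $3P_1$ $\{x_{j+1},y,z\}$, contradicting curiousness; hence $V_2^j$ is complete to $V_3^k$. Dually, if $y\in V_2^k$ and $z\in V_3^j$, both are adjacent to $x_{j+1}$, and an edge $yz$ would create a rainbow $K_3$; hence $V_3^j$ is anti-complete to $V_2^k$. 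Neither argument refers to the internal structure of $G[V_2\cup V_3]$, which is precisely why the proof works uniformly for curious graphs of both type~$0$ and type~$1$.

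Finally, bipartiteness of every slice will be straightforward. The slice $G^0$ has $V_1^0=\emptyset$ and is therefore bipartite with sides $V_2^0$ and $V_3^0$. For $i\ge 1$, the definitions of $V_2^i$ and $V_3^i$ force $x_i$ to be complete to $V_2^i$ and anti-complete to $V_3^i$, so $G^i$ admits the bipartition $(\{x_i\}\cup V_3^i,\,V_2^i)$: the set $\{x_i\}\cup V_3^i$ is independent because $V_3^i\subseteq V_3$ is independent in $G$ and $x_i$ has no neighbour in $V_3^i$, while $V_2^i\subseteq V_2$ is independent. The degenerate case $V_1=\emptyset$ simply places all of $G$ in the single bipartite slice $G^0$, completing the lemma.
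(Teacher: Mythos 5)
Your proof is correct and follows essentially the same route as the paper: the same ordering of $V_1$ from Lemma~\ref{lem:type01-lin-order}, the same threshold-based partition of $V_2$ and $V_3$, and the same rainbow $3P_1$/$K_3$ arguments for the $V_2$--$V_3$ constraints (using $x_{j+1}$ as witness instead of $x_k$ or $x_j$, which is immaterial). The only implicit step is the harmless relabelling of the parts so that $G[V_1\cup V_2]$ and $G[V_1\cup V_3]$ are the $2P_2$-free pairs, which the paper also assumes without loss of generality.
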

\end{sloppypar}
\begin{proof}
Let $x_1,\ldots,x_\ell$ be a linear order on~$V_1$ satisfying Lemma~\ref{lem:type01-lin-order}.
Let $V_1^0=\emptyset$ and for $i \in \{1,\ldots,\ell\}$, let $V_1^i=\{x_i\}$.
We partition~$V_2$ and~$V_3$ as follows.
For $i \in \{0,\ldots,\ell\}$, let $V_2^i = \{y \in V_2 \; | \; x_jy \in E(G) \; \textrm{if and only if} \; j\leq i\}$.
For $i \in \{0,\ldots,\ell\}$, let $V_3^i = \{z \in V_3 \; | \; x_jz \notin E(G) \; \textrm{if and only if} \; j\leq i\}$.
In particular, note that the vertices of $V_2^\ell \cup V_3^0$ and~$V_2^0 \cup V_3^\ell$ are complete and anti-complete to~$V_1$, respectively.
The following properties hold:
\begin{itemize}
\item If $j<k$ then~$V_1^j$ is complete to~$V_2^k$ and anti-complete to~$V_3^k$.
\item If $j>k$ then~$V_1^j$ is anti-complete to~$V_2^k$ and complete to~$V_3^k$.
\end{itemize}
If $j<k$ and $y \in V_2^j$ is non-adjacent to $z \in V_3^k$ then $G[x_k,y,z]$ is a rainbow~$3P_1$, a contradiction.
If $j>k$ and $y \in V_2^j$ is adjacent to $z \in V_3^k$ then $G[x_j,y,z]$ is a rainbow~$K_3$, a contradiction.
It follows that:
\begin{itemize}
\item If $j<k$ then~$V_2^j$ is complete to~$V_3^k$.
\item If $j>k$ then~$V_2^j$ is anti-complete to~$V_3^k$.
\end{itemize}

For $i \in \{0,\ldots,\ell\}$, let $G^i=G[V_1^i \cup V_2^i \cup V_3^i]$.
The above properties about the edges between the sets~$V_j^i$ show that~$G$ can be partitioned into the slices $G^0,\ldots,G^\ell$.
Now, for each $i \in \{0,\ldots,\ell\}$, $V_1^i$ is anti-complete to~$V_3^i$, so every slice~$G^i$ is bipartite.
This completes the proof.\qed
\end{proof}

\subsubsection{Curious Graphs of Type~$2$ and~$3$.}

\begin{lemma}\label{lem:cur-type23-slice-type12}
Fix $t \in \{2,3\}$.
If~$G$ is a curious graph of type~$t$ with respect to a partition $(V_1,V_2,V_3)$ then~$G$ can be partitioned into slices of type at most $t-1$.
\end{lemma}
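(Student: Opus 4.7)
The plan is to build an explicit slice partition that reduces the type by one. After possibly relabelling the parts $V_1, V_2, V_3$, we aim to ensure that $G^i[V_2^i \cup V_3^i]$ is $2P_2$-free in every slice; for $t = 3$ this alone adds a new $2P_2$-free bipartite subgraph to every slice (dropping the type from $3$ to at most~$2$), while for $t = 2$ we may assume $G[V_2 \cup V_3]$ is already the unique initially $2P_2$-free bipartite subgraph, and the construction must additionally force one of $G^i[V_1^i \cup V_2^i]$ and $G^i[V_1^i \cup V_3^i]$ to be $2P_2$-free in every slice.

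First I would unfold the curious property at the vertex level: for each $v \in V_1$, the sets $A(v) = N(v) \cap V_2$ and $B(v) = N(v) \cap V_3$ satisfy that $A(v)$ is anti-complete to $B(v)$ while $V_2 \setminus A(v)$ is complete to $V_3 \setminus B(v)$, because otherwise a rainbow $K_3$ or $3P_1$ would appear. Each $v$ therefore imposes a ``rectangular'' constraint on $G[V_2 \cup V_3]$. Next I would construct sequences $V_2^0, \ldots, V_2^\ell$ and $V_3^0, \ldots, V_3^\ell$ satisfying, for $j < k$, that $V_2^j$ is complete to $V_3^k$, $V_2^k$ is anti-complete to $V_3^j$, and such that each $G^i[V_2^i \cup V_3^i]$ is $2P_2$-free. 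When $t = 2$ the bipartite graph $G[V_2 \cup V_3]$ is a chain graph and the partition comes from its canonical equivalence classes under neighbourhood inclusion (reversed on one side to match the slice direction); a short calculation shows each resulting intra-slice and inter-slice bipartite subgraph is either a biclique or empty, hence $2P_2$-free. When $t = 3$ I would iteratively peel off chain-graph layers by induction on the number of induced $2P_2$'s in $G[V_2 \cup V_3]$, using the rectangular signatures of the $V_1$-vertices to guarantee consistency between successive peelings. Finally I would assign each $v \in V_1$ to the unique slice $V_1^{k(v)}$ determined by the ``transition index'' $k(v)$ of its signature, namely the unique $k$ with $V_2^{k+1} \cup \cdots \cup V_2^\ell \subseteq A(v) \subseteq V_2^k \cup \cdots \cup V_2^\ell$ and $V_3^0 \cup \cdots \cup V_3^{k-1} \subseteq B(v) \subseteq V_3^0 \cup \cdots \cup V_3^k$. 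Existence and uniqueness of $k(v)$ follow, for $t = 2$, from a direct calculation that $A(v)$ always forms a ``prefix'' of the chain-graph classes (with at most one partial class at the boundary) and $B(v)$ a symmetric ``suffix''; the inter-slice conditions involving $V_1$ then follow immediately from the definition of $k(v)$.

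The hard part will be the $t = 3$ case, where no bipartite subgraph of $G$ is initially $2P_2$-free, so the layered decomposition of $V_2 \cup V_3$ must be constructed without a pre-existing chain-graph structure to ride on. I expect to proceed by strong induction on the number of induced $2P_2$'s in $G[V_2 \cup V_3]$: at each step, fix a $2P_2$ with edges $y_1 z_1, y_2 z_2$ and non-edges $y_1 z_2, y_2 z_1$, and observe via the curious constraint that each $v \in V_1$ satisfies $N(v) \cap \{y_1, y_2, z_1, z_2\} \in \{\{y_1, y_2\}, \{z_1, z_2\}\}$ (the remaining fourteen subsets being ruled out case-by-case by rainbow-$K_3$ or rainbow-$3P_1$ considerations). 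This two-colouring of $V_1$ induces a coarse two-slice boundary on $V_2$ and $V_3$; iterating on the resulting sub-bipartite-graphs and recombining the partitions gives the full slice decomposition. A secondary obstacle, present only for $t = 2$, is to argue that within each slice at least one of $G^i[V_1^i \cup V_2^i]$ and $G^i[V_1^i \cup V_3^i]$ is $2P_2$-free; here the $V_1^i$-vertices share the same transition index and hence have ``aligned'' signatures, so an adaptation of Lemma~\ref{lem:type01-lin-order} yields a monotone ordering of $V_1^i$ that makes one of the two bipartite subgraphs a chain graph.
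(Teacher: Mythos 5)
Your vertex-level ``rectangular'' constraint and your observation for $t=3$ (that each $v\in V_1$ sees exactly the two $V_2$-ends or exactly the two $V_3$-ends of any induced $2P_2$ in $G[V_2\cup V_3]$) are correct, and that part is, up to relabelling the classes, the starting claim of the paper's proof. The genuine gap is in the $t=2$ branch: you slice along the $2P_2$-\emph{free} pair $(V_2,V_3)$, and that cannot work in general. Concretely, take $V_2=\{y_1,y_2\}$, $V_3=\{z_1,z_2\}$ with no edges between $V_2$ and $V_3$, and $V_1=\{u_1,u_2,w_1,w_2\}$, where $u_1,u_2$ are complete to $V_3$ with $u_1y_1,u_2y_2$ edges and $u_1y_2,u_2y_1$ non-edges, and $w_1,w_2$ are complete to $V_2$ with $w_1z_1,w_2z_2$ edges and $w_1z_2,w_2z_1$ non-edges. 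A rainbow $K_3$ would need a $V_2$--$V_3$ edge (there is none), and a rainbow $3P_1$ would need a vertex of $V_1$ with a non-neighbour in both $V_2$ and $V_3$ (there is none), so $G$ is curious of type $2$, with $\{u_1,y_1,u_2,y_2\}$ and $\{w_1,z_1,w_2,z_2\}$ inducing $2P_2$s. Here $G[V_2\cup V_3]$ has a single neighbourhood class on each side, every vertex of $V_1$ receives the same transition index, and your construction returns the single slice $G$ itself, which still has type $2$. In particular your final claim --- that the vertices of $V_1^i$ sharing a transition index have aligned signatures, so that one of $G^i[V_1^i\cup V_2^i]$, $G^i[V_1^i\cup V_3^i]$ becomes a chain graph --- is false; Lemma~\ref{lem:type01-lin-order} needs both of those bipartite graphs restricted to the slice to be $2P_2$-free before it gives anything, so it cannot be ``adapted'' to create that freeness.

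The structural reason is that slicing along the $2P_2$-free pair leaves the $2P_2$s of the other two pairs intact inside the slices, so the type cannot drop. What is needed (and what the paper does, uniformly for $t\in\{2,3\}$, with no case split) is to slice along a pair that \emph{does} contain an induced $2P_2$, say $G[V_1\cup V_2]$: take a maximal collection of vertex-disjoint induced $2P_2$s there, use the observation above to group them into blocks that no vertex of $V_3$ distinguishes, prove that the blocks are linearly ordered (the chain property), absorb the leftover vertices of $V_1\cup V_2$ that still distinguish a block via an update step, and only then read off the slices: the block slices are bipartite (type at most $1$), and in every other slice the $(V_1,V_2)$-part is $2P_2$-free by maximality, so its type is at most $t-1$. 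Your $t=3$ sketch contains the first step of this but stops at ``iterate on the sub-bipartite-graphs and recombine''; the linear order on blocks, the update procedure, and the treatment of the $2P_2$-free remainder are exactly the parts that require the work, and your $t=2$ branch needs to be replaced by this same argument rather than by the chain-class slicing of $G[V_2\cup V_3]$.
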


\begin{proof}
\setcounter{ctrclaim}{0}
Fix $t \in \{2,3\}$ and let~$G$ be a curious graph of type~$t$ with respect to a partition $(V_1,V_2,V_3)$.
We may assume that $G[V_1\cup V_2]$ contains an induced~$2P_2$.

\clm{\label{clm5:nbr-in-2P2a}Given a~$2P_2$ in $G[V_1\cup V_2]$, every vertex of~$V_3$ has exactly two neighbours in the~$2P_2$ and these neighbours either both lie in~$V_1$ or both lie in~$V_2$.}
Let $x_1,x_2 \in V_1$ and $y_1,y_2 \in V_2$ induce a~$2P_2$ in~$G$ such that~$x_1$ is adjacent to~$y_1$ but not to~$y_2$ and~$x_2$ is adjacent to~$y_2$ but not to~$y_1$.
Consider $z \in V_3$.
For $i \in \{1,2\}$, if~$z$ is adjacent to both~$x_i$ and~$y_i$, then $G[x_i,y_i,z]$ is a rainbow~$K_3$, a contradiction.
Therefore~$z$ can have at most one neighbour in $\{x_1,y_1\}$ and at most one neighbour in~$\{x_2,y_2\}$.
If~$z$ is non-adjacent to~$x_1$ and~$y_2$ then $G[x_1,y_2,z]$ is a rainbow~$3P_1$, a contradiction.
Therefore, $z$ can have at most one non-neighbour in $\{x_1,y_2\}$, and similarly~$z$ have have at most one non-neighbour in $\{x_2,y_1\}$.
Therefore, if~$z$ is adjacent to~$x_1$, then it must be non-adjacent to~$y_1$, so it must be adjacent to~$x_2$, so it must be non-adjacent to~$y_2$.
Similarly, if~$z$ is non-adjacent to~$x_1$ then it must be adjacent to~$y_2$, so it must be non-adjacent to~$x_2$, so it must be adjacent to~$y_1$.
Hence Claim~\ref{clm5:nbr-in-2P2a} follows.

\medskip
\noindent
Consider a maximal set $\{H^1,\ldots,H^q\}$ of vertex-disjoint sets that induce copies  of~$2P_2$ in $G[V_1\cup V_2]$.
We say that a vertex of~$V_3$ {\em distinguishes} two graphs~$G[H^i]$ and~$G[H^j]$ if its neighbours in~$H^i$ and~$H^j$ do not belong to the same set~$V_k$.
We group these sets~$H^i$ into {\em blocks} $B^1,\ldots,B^p$ that are not distinguished by any vertex of~$V_3$.
In other words, every~$B^i$ contains at least one~$2P_2$ and every vertex of~$V_3$ is complete to one of the sets $B^i \cap V_1$ and $B^i \cap V_2$ and anti-complete to the other.
For $j \in \{1,2\}$, let $B^i_j=B^i \cap V_j$.
We define a relation~$<_B$ on the blocks as follows:
\begin{itemize}
\item $B^i<_B B^j$ holds if~$B_1^i$ is complete to~$B_2^j$, while~$B_2^i$ is anti-complete to~$B_1^j$.
\end{itemize}
For distinct blocks $B^i$, $B^j$ at most one of $B^i<_B B^j$ and $B^j<_B B^i$ can hold.

\clm{\label{clm5:Bi-Bj-z-diffa}Let~$B^i$ and~$B^j$ be distinct blocks.
There is a vertex $z \in V_3$ that differentiates~$B^i$ and~$B^j$.
If~$z$ is complete to $B^i_2 \cup B^j_1$ and anti-complete to $B^i_1 \cup B^j_2$ then $B^i<_B B^j$ (see also \figurename~\ref{fig:block-order2}).
If~$z$ is complete to $B^j_2 \cup B^i_1$ and anti-complete to $B^j_1 \cup B^i_2$ then $B^j<_B B^i$.}
By definition of the blocks~$B^i$ and~$B^j$ there must be a vertex $z \in V_3$ that distinguishes them.
Without loss of generality we may assume that~$z$ is complete to $B_2^i\cup B_1^j$ and anti-complete to $B_1^i\cup B_2^j$.
It remains to show that $B^i<_B B^j$.
If $y \in B_2^i$ is adjacent to $z \in B_1^j$ then $G[x,y,z]$ is a rainbow~$K_3$, a contradiction.
If $y \in B_1^i$ is non-adjacent to $z \in B_2^j$ then $G[x,y,z]$ is a rainbow~$3P_1$, a contradiction.
Therefore~$B_2^i$ is anti-complete to~$B_1^j$ and~$B_1^i$ is complete to~$B_2^j$.
It follows that $B^i<_B B^j$.
Claim~\ref{clm5:Bi-Bj-z-diffa} follows by symmetry.

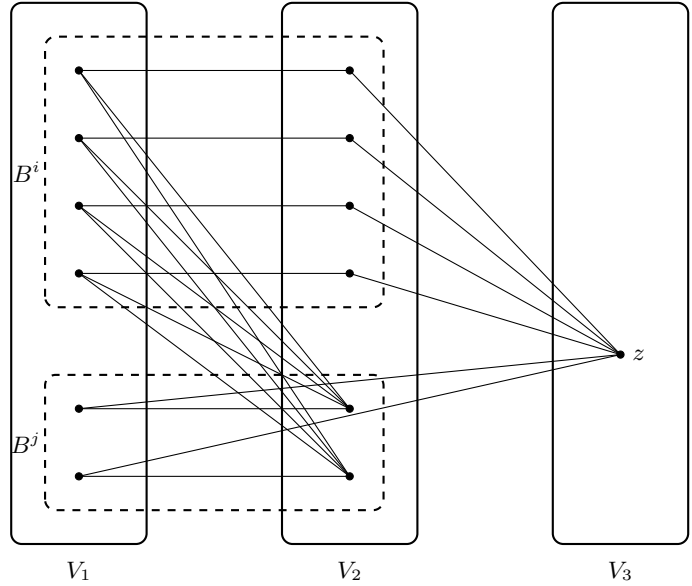
\begin{figure}
\begin{center}
\begin{tikzpicture}[scale=0.9]
\draw[rounded corners, color=black, thick] (-1,0) rectangle (1,8);
\draw[rounded corners, color=black, thick] (3,0) rectangle (5,8);
\draw[rounded corners, color=black, thick] (7,0) rectangle (9,8);

\draw[rounded corners, color=black, thick, dashed] (-0.5,0.5) rectangle (4.5,2.5);
\draw[rounded corners, color=black, thick, dashed] (-0.5,3.5) rectangle (4.5,7.5);

\coordinate (x_1) at (0,1) ;
\coordinate (x_2) at (0,2) ;
\coordinate (x_4) at (0,4) ;
\coordinate (x_5) at (0,5) ;
\coordinate (x_6) at (0,6) ;
\coordinate (x_7) at (0,7) ;
\coordinate (y_1) at (4,1) ;
\coordinate (y_2) at (4,2) ;
\coordinate (y_4) at (4,4) ;
\coordinate (y_5) at (4,5) ;
\coordinate (y_6) at (4,6) ;
\coordinate (y_7) at (4,7) ;
\coordinate (z) at (8,2.8) ;

\draw (0,0) node [label={[label distance=-0pt]-90:$V_1$}] {};
\draw (4,0) node [label={[label distance=-0pt]-90:$V_2$}] {};
\draw (8,0) node [label={[label distance=-0pt]-90:$V_3$}] {};
\draw (-0.5,5.5) node [label={[label distance=-5pt]180:$B^i$}] {};
\draw (-0.5,1.5) node [label={[label distance=-5pt]180:$B^j$}] {};
\draw (z) node [label={[label distance=-2pt]0:$z$}] {};

\draw [fill=black] (x_1) circle (1.5pt) ;
\draw [fill=black] (x_2) circle (1.5pt) ;
\draw [fill=black] (x_4) circle (1.5pt) ;
\draw [fill=black] (x_5) circle (1.5pt) ;
\draw [fill=black] (x_6) circle (1.5pt) ;
\draw [fill=black] (x_7) circle (1.5pt) ;
\draw [fill=black] (y_1) circle (1.5pt) ;
\draw [fill=black] (y_2) circle (1.5pt) ;
\draw [fill=black] (y_4) circle (1.5pt) ;
\draw [fill=black] (y_5) circle (1.5pt) ;
\draw [fill=black] (y_6) circle (1.5pt) ;
\draw [fill=black] (y_7) circle (1.5pt) ;
\draw [fill=black] (z)   circle (1.5pt) ;

\draw (x_1) -- (y_1);
\draw (x_2) -- (y_2);
\draw (x_4) -- (y_4);
\draw (x_5) -- (y_5);
\draw (x_6) -- (y_6);
\draw (x_7) -- (y_7);

\draw (x_1) -- (z);
\draw (x_2) -- (z);
\draw (z) -- (y_4);
\draw (z) -- (y_5);
\draw (z) -- (y_6);
\draw (z) -- (y_7);

\draw (x_4) -- (y_1);
\draw (x_5) -- (y_1);
\draw (x_6) -- (y_1);
\draw (x_7) -- (y_1);
\draw (x_4) -- (y_2);
\draw (x_5) -- (y_2);
\draw (x_6) -- (y_2);
\draw (x_7) -- (y_2);

\end{tikzpicture}
\end{center}
\caption{\label{fig:block-order2}Two blocks~$B^i$ and~$B^j$ with $B^i <_B B^j$ and a vertex $z \in V_3$ differentiating them.}
\end{figure}

\clm{\label{c-3a}The relation~$<_B$ is transitive.}
Suppose that $B^i <_B B^j$ and $B^j <_B B^k$.
Since~$B^j$ and~$B^k$ are distinct, there must be a vertex $z \in V_3$ that distinguishes them and combining Claim~\ref{clm5:Bi-Bj-z-diffa} with the fact that $B^j <_B B^k$ it follows that~$z$ must be complete to $B_2^j\cup B_1^k$.
Suppose $x \in B^i_1$ and $y \in B^j_2$.
Then~$x$ is adjacent to~$y$, since $B^i <_B B^j$ and~$y$ is adjacent to~$z$ by choice of~$z$.
Now~$x$ is must be non-adjacent to~$z$ otherwise $G[x,y,z]$ would be a rainbow~$K_3$.
Therefore~$z$ is anti-complete to~$B_1^i$, so~$z$ distinguishes~$B^i$ and~$B^k$.
By Claim~\ref{clm5:Bi-Bj-z-diffa} it follows that $B^i <_B B^k$.
This completes the proof of Claim~\ref{c-3a}.

\medskip
\noindent
Combining Claims~\ref{clm5:nbr-in-2P2a}--\ref{c-3a}, we find that~$<_B$ is a linear order on the blocks.
We obtain the following conclusion, which we call the {\em chain property}.

\clmnonewline{\label{clm5:chain-propa}The set of blocks admits a linear order $B^1 <_B B^2 <_B \cdots <_B B^p$ such that 
\begin{enumerate}[(i)]
\renewcommand{\theenumi}{(\roman{enumi})}
\renewcommand{\labelenumi}{(\roman{enumi})}
\item \label{clm5:chain-prop-ia}if~$i<j$ then~$B^i_1$ is complete to~$B^j_2$, while~$B^i_2$ is anti-complete to~$B^j_1$ and
\item \label{clm5:chain-prop-iia}for each $z \in V_3$ there exists an $i\in \{0,\ldots,p\}$ such that
if $j \leq i$ then~$z$ is complete to~$B^j_2$ and anti-complete to~$B^j_1$ and if $j>i$ then~$z$ is anti-complete to~$B^j_2$ and complete to~$B^j_1$.
\end{enumerate}}

\noindent
Next consider the set of vertices in $V_1 \cup V_2$ that do not belong to any set~$B^i$.
Let~$R$ denote this set and note that~$G[R]$ is $2P_2$-free by maximality of the set $\{H^1,\ldots,H^q\}$.
For $i \in \{1,2\}$ let $R_i=R \cap V_i$.

\clm{\label{claim:+a}If $x\in R_1$ has a neighbour in~$B^i_2$, then~$x$ is complete to~$B^{i+1}_2$, and if~$x$ has a non-neighbour in~$B^i_2$, then~$x$ is anti-complete to~$B^{i-1}_2$. 
If $x\in R_2$ has a non-neighbour in $B^i_1$, then $x$ is anti-complete to~$B^{i+1}_1$, and if~$x$ has a neighbour in~$B^i_1$, then~$x$ is complete to~$B^{i-1}_1$.}
Suppose, for contradiction, that $x\in R_1$ is adjacent to $y\in B^i_2$ and non-adjacent to $y'\in B^{i+1}_2$. 
Consider a vertex $z\in V_3$ that distinguishes~$B^i$ and~$B^{i+1}$.
Since $B^i <_B B^{i+1}$, Claim~\ref{clm5:Bi-Bj-z-diffa} implies that~$z$ is complete to $B^{i+1}_1 \cup B^i_2$ and anti-complete to $B^{i+1}_2 \cup B^i_1$.
Now if~$x$ is adjacent to~$z$ then $G[x,y,z]$ is a rainbow~$K_3$ and if~$x$ is non-adjacent to~$z$ then $G[x,y',z]$ is a rainbow~$3P_1$.
It follows that if $x\in R_1$ has a neighbour in~$B^i_2$ then~$x$ is complete to~$B^{i+1}_2$ and if~$x$ has a non-neighbour in~$B^{i+1}_2$, then~$x$ is anti-complete to~$B^i_2$.
The claim follows by symmetry.

\medskip
\noindent
Claim~\ref{claim:+a} allows us to update the sequence of blocks as follows:

\medskip
\noindent
{\bf Update Procedure. }{\em For $i \in \{1,2\}$, if~$R_i$ contains a vertex~$x$ that has both a neighbour~$y$ and a non-neighbour~$y'$ in~$B^j_{3-i}$ for some~$j$, we add~$x$ to the sets~$B^j_i$ and~$B^j$ and remove it from~$R_i$.}

\clm{\label{clm5:update-proc-preserves-chain-propa}Applying the Update Procedure preserves the chain property (see Claim~\ref{clm5:chain-propa}) of the blocks~$B^i$.}
Assume that the chain property holds (possibly after some applications of the Update Procedure).
Without loss of generality, assume $x \in R_1$ has both a neighbour~$y$ and a non-neighbour~$y'$ in some set~$B^j_2$ (the case where $x \in R_2$ follows similarly).
We will show that the chain property continues to hold after adding~$x$ to~$B^j_1$ and~$B_j$ and removing it from~$R_1$.
Recall that every vertex of~$B^j_1$ has the same neighbourhood in~$V_3$ by definition of~$B^j$.
We first show that~$x$ has the same neighbourhood in~$V_3$ as the vertices of~$B^j_1$.
If $z \in V_3$ is non-adjacent to~$x$, but complete to~$B^j_1$, then~$z$ is anti-complete to~$B^j_2$, so $G[x,y',z]$ is a rainbow~$3P_1$, a contradiction.
Similarly, if $z \in V_3$ is adjacent to~$x$, but anti-complete to~$B^j_1$, then~$z$ is complete to~$B^j_2$, so $G[x,y,z]$ is a rainbow~$K_3$, a contradiction.
Therefore~$x$ must have the same neighbourhood in~$V_3$ as the vertices of~$B^j_1$.
By Claim~\ref{clm5:chain-propa}, this means that Property~\ref{clm5:chain-prop-iia} of the chain property is preserved if we apply the Update Procedure with the vertex~$x$.

Now suppose that $y'' \in B^k_2$ for some $k \neq j$.
Then there must be a vertex $z \in V_3$ that differentiates~$B^j$ and~$B^k$.
If $k<j$ then by Claim~\ref{clm5:Bi-Bj-z-diffa} the vertex~$z$ is complete to $B^k_2 \cup B^j_1$ and anti-complete to $B^k_1 \cup B^j_2$, so~$x$ must be non-adjacent to~$y''$, otherwise $G[x,y'',z]$ would be a rainbow~$K_3$.
If $k>j$ then by Claim~\ref{clm5:Bi-Bj-z-diffa} the vertex~$z$ is anti-complete to $B^k_2 \cup B^j_1$ and complete to $B^k_1 \cup B^j_2$, so~$x$ must be adjacent to~$y''$, otherwise $G[x,y'',z]$ would be a rainbow~$3P_1$.
We conclude that Property~\ref{clm5:chain-prop-ia} of the chain property is also preserved if we apply the Update Procedure with the vertex~$x$.
By symmetry and induction this completes the proof of Claim~\ref{clm5:update-proc-preserves-chain-propa}.

\medskip
\noindent
By Claim~\ref{clm5:update-proc-preserves-chain-propa} we may therefore apply the Update Procedure exhaustively, after which the chain property will continue to hold.
Once this procedure is complete, every remaining vertex of~$R_1$ will be either complete or anti-complete to each set~$B^j_2$.
In fact, by Claim~\ref{claim:+a}, we know that for every vertex $x \in R_1$, there is an $i \in \{0,\ldots,p\}$ such that~$x$ has a neighbour in all~$B^j_2$ with $j>i$ (if such a~$j$ exists) and~$x$ has a non-neighbour in all $B^j_2$ with $j\leq i$ (if any such~$j$ exists).
Since~$x$ is complete or anti-complete to each set~$B^j_2$, we obtain the following conclusion:
\begin{itemize}
\item for every vertex $x \in R_1$, there is an $i \in \{0,\ldots,p\}$ such that~$x$ is complete to all~$B^j_2$ with $j>i$ (if such a~$j$ exists) and~$x$ is anti-complete to all $B^j_2$ with $j\leq i$ (if any such~$j$ exists).
We denote the corresponding subset of~$R_1$ by~$Y^i_1$.
\end{itemize}
By symmetry, we also obtain the following:
\begin{itemize}
\item for every vertex $x \in R_2$, there is an $i \in \{0,\ldots,p\}$ such that~$x$ is complete to all~$B^j_1$ with $j \leq i$ (if such a~$j$ exists) and~$x$ is anti-complete to all $B^j_1$ with $j > i$ (if any such~$j$ exists).
We denote the corresponding subset of~$R_2$ by~$Y^i_2$.
\end{itemize}

\noindent
We also partition the vertices of~$V_3$ into~$p+\nobreak 1$ subsets $V_3^0,\ldots,V_3^p$ such that the vertices of~$V_3^j$ are complete to~$B^i_2$ and anti-complete to~$B^i_1$ for $i \leq j$ and complete to~$B^i_1$ and anti-complete to~$B^i_2$ for $i > j$.
(So~$V_3^0$ is complete to~$B^1_i$ for all~$i$ and~$V_3^p$ is complete to~$B^2_i$ for all~$i$).

\clm{\label{claim:-a}For each~$i$, if $j<i$ then~$V_3^i$ is anti-complete to~$Y^j_1$ and complete to~$Y^j_2$, and if $j>i$ then~$V_3^i$ is complete to~$Y^j_1$ and anti-complete to~$Y^j_2$.}
Suppose that $z \in V_3^i$ and $x \in Y^j_1$ and $y \in Y^j_2$ (note that such vertices~$x$ and~$y$ do not exist if~$Y^j_1$ or~$Y^j_2$, respectively, is empty).
First suppose that $j<i$ and choose arbitrary vertices $x' \in B^i_1$, $y' \in B^i_2$.
Note that~$x$ and~$z$ are both complete to~$B_2^i$ and~$y$ and~$z$ are both anti-complete to~$B_1^i$.
Then~$z$ cannot be adjacent to~$x$ otherwise $G[x,y',z]$ would be a rainbow~$K_3$ and~$z$ must be adjacent to~$y$, otherwise $G[x',y,z]$ would be a rainbow~$3P_1$.
Now suppose $i<j$ and choose arbitrary vertices $x' \in B^{i+1}_1$, $y' \in B^{i+1}_2$.
Note that~$x$ and~$z$ are both anti-complete to~$B_2^{i+1}$ and~$y$ and~$z$ are both complete to~$B_1^{i+1}$.
Then~$z$ must be adjacent to~$x$ otherwise $G[x,y',z]$ would be a rainbow~$3P_1$ and~$z$ must be non-adjacent to~$y$, otherwise $G[x',y,z]$ would be a rainbow~$K_3$.
This completes the proof of Claim~\ref{claim:-a}.

\medskip
\begin{sloppypar}
\noindent
Let~$G^i$ denote the subgraph of~$G$ induced by $Y_1^i\cap Y_2^i\cap V_3^i$.
By Claims~\ref{clm5:chain-propa}, \ref{clm5:update-proc-preserves-chain-propa} and~\ref{claim:-a} the graph~$G$ can be partitioned into slices: $G^0,G[B^1],G^1,G[B^2],\ldots,G[B^p],G^p$.
Recall that the graph~$G$ is of type~$t$ and $G[V_1\cup V_2]$ contains an induced~$2P_2$.
Since~$G[Y_1^i \cup Y_2^i]$ is $2P_2$-free (by construction, since the original sequence $H^1,H^2,\ldots,H^q$ of $2P_2$s was maximal), it follows that each~$G^i$ is of type at most~$t-1$.
Furthermore, since each~$G[B_i]$ is bipartite, it forms a curious graph in which the set~$V_3$ is empty, so it has type at most~$1$.
This completes the proof.\qed
\end{sloppypar}
\end{proof}

\subsubsection{Curious Graphs, Clique-width and Well-quasi-orderability.}

We are now ready to state the main result of this section.
\begin{theorem}\label{thm:curious}
Let~$X$ be a hereditary class of graphs.
If the set of bipartite graphs in~$X$ is well-quasi-ordered by the labelled induced subgraph relation or has bounded clique-width, then this property also holds for the set of curious graphs in~$X$.
\end{theorem}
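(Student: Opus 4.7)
The plan is to argue by induction on the type $t \in \{0,1,2,3\}$ of a curious graph, combining the slice decompositions from Lemma~\ref{lem:cur-type01-slice-bip} and Lemma~\ref{lem:cur-type23-slice-type12} with the ``gluing'' Lemmas~\ref{lem:slice-cw} and~\ref{lem:slice-wqo}. Let $P$ denote either of the two properties ``well-quasi-ordered by the labelled induced subgraph relation'' or ``has bounded clique-width''. I will prove the following statement by induction on $t$: if the bipartite graphs in $X$ satisfy $P$, then the curious graphs in $X$ of type at most $t$ (with respect to some partition witnessing curiousness) satisfy $P$. Since every curious graph has type at most $3$, the theorem follows.

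For the base case $t \in \{0,1\}$, let $G$ be a curious graph in $X$ of type at most $1$ with respect to $(V_1,V_2,V_3)$. By Lemma~\ref{lem:cur-type01-slice-bip}, $G$ admits a partition into slices that are bipartite. Each such slice is an induced subgraph of $G$, hence belongs to the hereditary class $X$, and is therefore a bipartite graph in $X$. By hypothesis, the class of bipartite graphs in $X$ satisfies $P$, so in particular the set of slices does. Applying Lemma~\ref{lem:slice-cw} if $P$ is bounded clique-width, or Lemma~\ref{lem:slice-wqo} with $Z$ taken to be the class of bipartite graphs in $X$ if $P$ is well-quasi-orderability, yields that $G$ satisfies $P$, with a bound on the clique-width that is at most $\max(3c,6)$ if bipartite graphs in $X$ have clique-width at most $c$.

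For the inductive step $t \in \{2,3\}$, let $G$ be a curious graph in $X$ of type $t$. Lemma~\ref{lem:cur-type23-slice-type12} gives a partition of $G$ into slices of type at most $t-1$; note that the bipartite ``block slices'' $G[B^i]$ automatically have type at most $1$. Each slice is an induced subgraph of $G$, hence lies in $X$, and inherits a $3$-partition with respect to which it is still curious (a rainbow $K_3$ or $3P_1$ in a slice would lift to one in $G$). Thus every slice is a curious graph in $X$ of type at most $t-1$, so by the inductive hypothesis the set of slices satisfies $P$. A second application of Lemma~\ref{lem:slice-cw} or Lemma~\ref{lem:slice-wqo} (this time taking $Z$ to be the class of curious graphs in $X$ of type at most $t-1$) then shows that $G$ satisfies $P$. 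Since $t \leq 3$, the induction terminates in at most three steps, so the clique-width bound remains finite (at most $\max(27c,54)$).

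I do not expect a genuine obstacle here: all the heavy lifting—producing the slice decomposition of a curious graph and controlling clique-width and well-quasi-order under slice composition—has been done in the earlier lemmas. The only point requiring a small amount of care is verifying that each slice in the decomposition is itself curious in $X$ of the asserted lower type, so that the induction hypothesis applies; but this is immediate from the hereditary nature of $X$ and the fact that rainbow configurations are preserved under taking induced subgraphs that respect the tripartition.\qed
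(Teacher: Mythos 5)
Your proposal is correct and follows essentially the same route as the paper: the paper's proof is exactly the chain of applications of Lemmas~\ref{lem:cur-type01-slice-bip}, \ref{lem:cur-type23-slice-type12}, \ref{lem:slice-cw} and~\ref{lem:slice-wqo}, applied once for type at most~$1$ and then twice more for types~$2$ and~$3$, which is your induction unrolled. Your added checks (slices lie in the hereditary class~$X$, slices are curious of lower type, bipartite block slices have type at most~$1$) and the explicit clique-width bound $\max(27c,54)$ are consistent with, and slightly more detailed than, the paper's presentation.
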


\begin{proof}
Suppose that the class of bipartite graphs in~$X$ is well-quasi-ordered by the labelled induced subgraph relation or has bounded clique-width.
By Lemmas~\ref{lem:slice-cw}, \ref{lem:slice-wqo} and~\ref{lem:cur-type01-slice-bip}, the curious graphs of type at most~$1$ also have this property.
Applying Lemmas~\ref{lem:slice-cw}, \ref{lem:slice-wqo} and~\ref{lem:cur-type23-slice-type12} once, we obtain the same conclusion for curious graphs of type at most~$2$.
Applying Lemmas~\ref{lem:slice-cw}, \ref{lem:slice-wqo} and~\ref{lem:cur-type23-slice-type12} again, we obtain the same conclusion for curious graphs of type at most~$3$, that is, all curious graphs.
This completes the proof.\qed
\end{proof}

\section{Applications of Our Technique}\label{s-main}

In this section we show that the class of $(K_3,P_1+\nobreak P_5)$-free graphs and the class of $(K_3,P_2+\nobreak P_4)$-free graphs have bounded clique-width and are well-quasi-ordered by the labelled induced subgraph relation.
To do this, we first prove two structural lemmas.
These lemmas consider the case where a graph in one of these classes contains an induced subgraph isomorphic to~$C_5$ and show how to use a bounded number of vertex deletions and bipartite complementations to transform the graph into a disjoint union of curious graphs and $3$-uniform graphs in the respective graph class.
We then use these lemmas to prove Theorem~\ref{thm:main}, which is our main result.

The first of the two lemmas is implicit in the proofs of~\cite[Lemma~9 and Theorem~3]{DDP15}, but without an explicit upper bound on the number of operations used and the number of obtained curious graphs.
For completeness, we give a direct proof and provide such explicit bounds.
Note that the bounds on the number of vertex deletions, bipartite complementations and curious graphs in both lemmas are not tight, but any upper bound on these numbers will be sufficient for our purposes.

\begin{lemma}\label{lem:K3P5+P1C5}
Given any connected $(K_3,P_1+\nobreak P_5)$-free graph~$G$ that contains an induced~$C_5$, we can apply at most~5 vertex deletions and at most~31 bipartite complementation operations to obtain a graph~$H$ that is the disjoint union of~11 $(K_3,P_1+\nobreak P_5)$-free curious graphs.
\end{lemma}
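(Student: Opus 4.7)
The plan is to fix an induced copy $C$ of $C_5$ in $G$ with vertex set $\{v_1,\dots,v_5\}$ and to classify the remaining vertices by their neighbourhood on $C$. Since $G$ is $K_3$-free, for every $u \in V(G)\setminus V(C)$ the set $N(u)\cap V(C)$ is an independent set of $C_5$, hence has size $0$, $1$, or $2$, and in the size-$2$ case the two neighbours are at distance~$2$ on~$C$. This yields exactly $11$ possible neighbourhood patterns, and I partition $V(G)\setminus V(C)$ accordingly into $11$ classes: the ``empty-type'' class $V_\emptyset$, the five singleton-type classes $V_i$ (whose vertices have neighbourhood $\{v_i\}$ on $C$), and the five double-type classes $V_{i,i+2}$ (whose vertices have neighbourhood $\{v_i,v_{i+2}\}$), with indices taken modulo~$5$.

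The first step is to delete the five vertices of $C$, which exhausts the vertex-deletion budget. By $K_3$-freeness, each of the ten classes $V_i$ and $V_{i,i+2}$ is an independent set, since any two of its members share a common $C$-neighbour. Consequently each such class is trivially a curious graph via the degenerate tripartition that places all of its vertices in one part: two of the three parts are empty, so neither a rainbow $K_3$ nor a rainbow $3P_1$ can exist. Hence the lemma reduces to showing that, after deleting $V(C)$, a suitable set of at most $31$ bipartite complementations makes the eleven classes pairwise anti-complete and, in addition, makes $V_\emptyset$ itself curious.

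The heart of the argument is a case analysis of the edges between pairs of classes, driven by $(P_1+P_5)$-freeness. The typical move is: given two adjacent vertices $u,u'$ lying in different classes, extend the edge $uu'$ by attaching an appropriate short arc of $C$ to form an induced $P_5$, and then locate a vertex $v_j\in V(C)$ anti-complete to this $P_5$, yielding a forbidden induced $P_1+P_5$. A symmetric argument using non-edges forces the dual configurations. The non-forbidden configurations are rigid, so for each pair of classes the bipartite graph between them is either empty, complete, or a simple ``step-like'' pattern that can be removed by a single bipartite complementation. Exploiting the dihedral symmetry of $C$ reduces the number of essentially distinct pair-types substantially, and summing the operations required---together with a small number used to detach $V_\emptyset$ from the other classes and to trivialise its internal structure---gives the claimed bound of $31$ bipartite complementations.

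The main obstacle is precisely this case analysis: although $\binom{11}{2}=55$ pairs of classes must in principle be considered, the dihedral symmetry of $C$ and the rigidity enforced by $(P_1+P_5)$-freeness keep the bookkeeping bounded. The only subtle point is the verification that $V_\emptyset$ ends up curious after the operations: for this one uses connectivity of $G$ to associate each vertex of $V_\emptyset$ with a neighbour in one of the ten other classes, and then applies the same $P_1+P_5$-extension trick through this neighbour to force $V_\emptyset$ to be bipartite (and hence curious) once the chosen bipartite complementations have been performed.
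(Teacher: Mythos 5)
Your setup (the eleven neighbourhood classes $U=V_\emptyset$, $W_i$, $V_i$ forced by $K_3$-freeness) matches the paper, but the reduction you then make --- ``after deleting $C$, at most $31$ bipartite complementations make the eleven classes pairwise anti-complete, and each class, being independent, is trivially curious'' --- is where the proof breaks. The bipartite graphs between certain pairs of classes are \emph{not} empty, complete, or a single complementable rectangle, and no bounded number of bipartite complementations can in general eliminate them. Concretely, the edges between $V_i$ and $V_{i+1}$ (after the natural refinement into $V_i^0,V_i^-,V_i^+$, the edges between $V_i^-$ and $V_{i+1}^+$), and likewise the edges between $W_i$ and $V_{i+2}\cup V_{i+3}$, can form bipartite graphs with unboundedly many distinct neighbourhoods, even containing induced $2P_2$'s. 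A single bipartite complementation between subsets only deletes one complete-bipartite ``rectangle'', so a chain-like or $2P_2$-containing pattern with many levels cannot be removed by one operation, and your appeal to dihedral symmetry does not help: symmetry reduces the number of cases to analyse, not the number of operations that must be applied. If your plan worked, every output piece would be an independent set and the entire curious-graph machinery of Section~\ref{sec:partitioning} would be unnecessary; the fact that the paper proves nontrivial lemmas for curious graphs of type $2$ and $3$ (i.e.\ whose bipartite pairs contain $2P_2$'s) is exactly because these leftover bipartite structures cannot be trivialised.

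The paper's route is different in precisely this respect: it does \emph{not} try to separate all eleven classes from one another. Instead it (a) detaches $G[U]$ with one bipartite complementation (after showing, via $P_1+P_5$-freeness and connectivity, that all of $U$ has a common neighbourhood, whence $U$ is independent); (b) refines $V_i$ into $V_i^0,V_i^-,V_i^+$ and $W_i$ into $W_i^0,W_i^2,W_i^3$; and (c) detaches, for each $i$, the \emph{triples} $W_i^0\cup V_{i+3}^+\cup V_{i+2}^-$ and $W_i^3\cup W_{i+1}^2\cup V_{i+3}^0$ using three bipartite complementations each, keeping the complicated edges \emph{inside} these triples. Each triple is curious because its three parts are independent and a rainbow $3P_1$ or $K_3$ would create a $P_1+P_5$ or triangle with cycle vertices; curiosity, not edgelessness, is what gets passed to Theorem~\ref{thm:curious}. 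This also explains the arithmetic $5$ deletions, $1+5\cdot 3+5\cdot 3=31$ complementations and $1+5+5=11$ curious graphs, which your pairwise-separation scheme cannot reproduce (even the pairs that \emph{are} complete or trivially structured already number more than $31$ once you try to clean them all). Your treatment of $V_\emptyset$ is also weaker than needed: the right statement is that $U$ is an independent set detachable by one complementation, not merely ``bipartite after the operations''. So the key missing idea is the grouping into curious $3$-partite triples with a ``no rainbow $3P_1$'' argument, rather than pairwise disconnection of the classes.
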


\begin{proof}
\setcounter{ctrclaim}{0}
Let~$G$ be a connected $(K_3,P_1+\nobreak P_5)$-free graph that contains an induced cycle~$C$ on the vertices $v_1,\ldots,v_5$, listed in order along the cycle.
To aid notation, for the remainder of the proof subscripts on vertices and on sets should be interpreted modulo~$5$.
We will show how to use vertex deletions and bipartite complementations to partition the graph into a disjoint union of curious graphs.
At the end of the proof, we will verify the number of operations used.

Since~$G$ is $K_3$-free, every vertex not on the cycle~$C$ has at most two neighbours on~$C$ and if it does have two neighbours on~$C$, then these neighbours must be non-consecutive vertices of the cycle.
We may therefore partition the vertices in $V(G) \setminus V(C)$ into eleven sets as follows:
\begin{itemize}
\item $U$ is the set of vertices anti-complete to~$C$,
\item for $i \in \{1,\ldots,5\}$, $W_i$ is the set of vertices whose unique neighbour on~$C$ is~$v_i$ and
\item for $i \in \{1,\ldots,5\}$, $V_i$ is the set of vertices adjacent to~$v_{i-1}$ and~$v_{i+1}$ and non-adjacent to the rest of~$C$.
\end{itemize}

We prove a series of claims.

\clm{\label{clm4:U-empty}We may assume that $U = \emptyset$.}
Suppose, for contradiction, that there are two vertices $u,u' \in U$ that do not have the same neighbourhood in some set~$V_i$ or~$W_i$.
Say $v \in V_i \cup W_{i+1}$ is adjacent to~$u$, but not~$u'$ for some $i \in \{1,\ldots,5\}$.
Note that~$v$ is adjacent to~$v_{i+1}$, but non-adjacent to $v_i,v_{i+2}$ and~$v_{i+3}$.
Then $G[v_{i+3},u',u,v,v_{i+1},v_i]$ is a $P_1+\nobreak P_5$ if~$u$ and~$u'$ are adjacent and $G[u',u,v,v_{i+1},v_{i+2},v_{i+3}]$ is a $P_1+\nobreak P_5$ if they are not.
This contradiction means that every vertex in~$U$ has the same neighbourhood in every set~$V_i$ and every set~$W_i$.
Since~$G$ is connected, there must be a vertex~$v$ in some~$V_i$ or~$W_i$ that is adjacent to every vertex of~$U$.
Since~$G$ is $K_3$-free, $U$ must therefore be an independent set.
Applying a bipartite complementation between~$U$ and the vertices adjacent to the vertices of~$U$ disconnects~$G[U]$ from the rest of the graph.
Since~$U$ is independent, $G[U]$ is a curious graph (with two of the three partition classes empty).
This completes the proof of Claim~\ref{clm4:U-empty}.

\clm{\label{clm4:V_i-1WiV_i+1-indep}For $i \in \{1,\ldots,5\}$, $V_{i-1} \cup W_i \cup V_{i+1}$ is an independent set.}
Indeed, if $x,y \in V_{i-1} \cup W_i \cup V_{i+1}$ are adjacent then $G[x,y,v_i]$ is a~$K_3$, a contradiction.
This completes the proof of Claim~\ref{clm3:V_i-1WiV_i+1-indep}.

\clm{\label{clm4:Wi-Wi+1-comp}For $i \in \{1,\ldots,5\}$, $W_i$ is complete to $W_{i-1} \cup W_{i+1}$.}
Indeed, if $x \in W_{i-1}$ is non-adjacent to $y \in W_i$ then $G[x,y,v_i,v_{i+1},v_{i+2},v_{i+3}]$ is a $P_1+\nobreak P_5$, a contradiction.
Claim~\ref{clm4:Wi-Wi+1-comp} follows by symmetry.

\clm{\label{clm4:Vi-Wi+2-triv}For $i \in \{1,\ldots,5\}$, $W_i$ is complete or anti-complete to~$W_{i+2}$.}
Suppose, for contradiction, that $x \in W_{i+2}$ has a neighbour $y \in W_i$ and a non-neighbour $y' \in W_i$.
By Claim~\ref{clm4:V_i-1WiV_i+1-indep}, $y$ is non-adjacent to~$y'$.
Then $G[y',y,x,v_{i+2},v_{i+3},v_{i+4}]$ is a $P_1+\nobreak P_5$, a contradiction.
Claim~\ref{clm4:Vi-Wi+2-triv} follows by symmetry.

\clm{\label{clm4:Vi-Wi-triv}For $i \in \{1,\ldots,5\}$, $V_i$ is either complete or anti-complete to~$W_i$.}
Suppose, for contradiction, that $x \in V_i$ has a neighbour $y \in W_i$ and a non-neighbour $y' \in W_i$.
By Claim~\ref{clm4:V_i-1WiV_i+1-indep}, $y$ is non-adjacent to~$y'$.
Then $G[y',y,x,v_{i+1},v_{i+2},v_{i+3}]$ is a $P_1+\nobreak P_5$, a contradiction.
Therefore every vertex in~$V_i$ is complete or anti-complete to~$W_i$.
Now suppose, for contradiction, that $y \in W_i$ has a neighbour $x \in V_i$ and a non-neighbour $x' \in V_i$.
By Claim~\ref{clm4:V_i-1WiV_i+1-indep}, $x$ is non-adjacent to~$x'$.
Then $G[v_{i+3},v_i,y,x,v_{i+1},x']$ is a $P_1+\nobreak P_5$, a contradiction.
Therefore every vertex in~$W_i$ is complete or anti-complete to~$V_i$.
This completes the proof of Claim~\ref{clm4:Vi-Wi-triv}.

\clm{\label{clm4:Vi-dominates-Vi-1orVi+1}For $i \in \{1,\ldots,5\}$, every vertex of~$V_i$ is complete to either~$V_{i-1}$ or~$V_{i+1}$.}
Suppose, for contradiction, that $x \in V_i$ is non-adjacent to $y \in V_{i-1}$ and $z \in V_{i+1}$.
By Claim~\ref{clm4:V_i-1WiV_i+1-indep}, $y$ is non-adjacent to~$z$.
Then $G[y,v_{i-1},x,v_{i+1},v_{i+2},z]$ is a $P_1+\nobreak P_5$, a contradiction.
This completes the proof of Claim~\ref{clm4:Vi-dominates-Vi-1orVi+1}.

\bigskip
\noindent
By Claim~\ref{clm4:Vi-dominates-Vi-1orVi+1}, we can partition each set~$V_i$ into three (possibly empty) subsets as follows:
\begin{itemize}
\item $V^0_i$ the set of vertices in~$V_i$ that dominate both~$V_{i-1}$ and~$V_{i+1}$,
\item $V^-_i$ the set of vertices in $V_i\setminus V^0_i$ that dominate~$V_{i-1}$ (and so have non-neighbours in~$V_{i+1}$),
\item $V^+_i$ the set of vertices in $V_i\setminus V^0_i$ that dominate~$V_{i+1}$ (and so have non-neighbours in~$V_{i-1}$).
\end{itemize}

\noindent
By definition of this partition, if $x\in V_i$ is non-adjacent to $y\in V_{i+1}$, then $x \in V^-_i$ and $y \in V^+_{i+1}$.
Moreover, {\em every} vertex of~$V^-_i$ has a non-neighbour in~$V^+_{i+1}$ and vice versa. 
Thus, the vertices of $V_1\cup\cdots\cup V_5$ are partitioned into 15 subsets (some or all of which may be empty).
Note that for $i \in \{1,\ldots,5\}$ the sets $V_i^0$, $V^-_i$, $V^+_i$, $V_{i+2}^0$, $V^-_{i+2}$ and~$V^+_{i+2}$ are pairwise anti-complete by Claim~\ref{clm4:V_i-1WiV_i+1-indep}.
Furthermore, for $i \in \{1,\ldots,5\}$, $V_i^0$ and~$V^+_i$ are complete to $V_{i+1}^0$, $V^-_{i+1}$ and~$V^+_{i+1}$, and $V^-_i$ is complete to $V_{i+1}^0$ and~$V^-_{i+1}$.
Therefore nearly every pair of these subsets is either complete or anti-complete to each other.
The only possible exceptions are the five disjoint pairs of the form $\{V^-_i,V^+_{i+1}\}$.

Next, for each $i \in \{1,\ldots,5\}$ we analyse the edges between~$W_i$ and the subsets of~$V_{i-2}$ and~$V_{i+2}$.

\newpage
\clm{\label{clm4:Wi-anti-Vi-2-Vi+2+}For $i \in \{1,\ldots,5\}$, $W_i$ is anti-complete to $V_{i-2}^-\cup V_{i+2}^+$.}
Suppose, for contradiction, that $x \in W_i$ has a neighbour $y \in V_{i+2}^+$.
By definition of~$V_{i+2}^+$, $y$ must have a non-neighbour $z \in V_{i+1}$.
By Claim~\ref{clm4:V_i-1WiV_i+1-indep}, $z$ is non-adjacent to~$x$.
Then $G[v_{i-1},x,y,v_{i+1},v_{i+2},z]$ is a $P_1+\nobreak P_5$, a contradiction.
Claim~\ref{clm4:Wi-anti-Vi-2-Vi+2+} follows by symmetry.

\clm{\label{clm4:W_iVi+2Vi+3-no-3P1}For $i \in \{1,\ldots,5\}$, if $x\in W_i$, $y\in V_{i+2}$ and $z\in V_{i-2}$ then $G[x,y,z]$ is not a~$3P_1$.}
Indeed, if $x\in W_i$, $y\in V_{i+2}$ and $z\in V_{i-2}$ and $G[x,y,z]$ is a~$3P_1$ then $G[z,x,v_i,v_{i+1},\allowbreak y,v_{i-2}]$ is a $P_1+\nobreak P_5$, a contradiction.
This completes the proof of Claim~\ref{clm4:W_iVi+2Vi+3-no-3P1}.

\clmnonewline{\label{clm4:x-in-Wi-nbr-in-W0+_i+2-then-anti-Vi+3-comp-V-_i+2}For $i \in \{1,\ldots,5\}$:
\begin{enumerate}[(i)]
\item If $x\in W_i$ has a neighbour in~$V^0_{i+2}$, then~$x$ is anti-complete to~$V_{i+3}$ and complete to~$V^-_{i+2}$.
\item If $x\in W_i$ has a neighbour in~$V^0_{i+3}$, then~$x$ is anti-complete to~$V_{i+2}$ and complete to~$V^+_{i+3}$.
\end{enumerate}
}
\noindent
Suppose $x\in W_i$ has a neighbour $y \in V^0_{i+2}$.
If $z \in V_{i+3}$ then~$y$ is adjacent to~$z$, so~$x$ must be non-adjacent to~$z$, otherwise $G[x,y,z]$ would be a~$K_3$.
Therefore~$x$ is anti-complete to~$V_{i+3}$.
If $y' \in V^-_{i+2}$ then~$y'$ has a non-neighbour $z' \in V_{i+3}$.
Note that~$z'$ is non-adjacent to~$x$.
Now~$x$ must be adjacent to~$y'$, otherwise $G[x,y',z']$ would be a~$3P_1$, contradicting Claim~\ref{clm4:W_iVi+2Vi+3-no-3P1}.
It follows that~$x$ is complete to~$V^-_{i+2}$.
Claim~\ref{clm4:x-in-Wi-nbr-in-W0+_i+2-then-anti-Vi+3-comp-V-_i+2} follows by symmetry.

\medskip
\noindent
Recall that~$W_i$ is anti-complete to $V_{i-2}^-\cup V_{i+2}^+$ by Claim~\ref{clm4:Wi-anti-Vi-2-Vi+2+}.
By Claim~\ref{clm4:x-in-Wi-nbr-in-W0+_i+2-then-anti-Vi+3-comp-V-_i+2} we can partition~$W_i$ into three (possibly empty) subsets as follows:
\begin{itemize}
\item $W^2_i$ the set of vertices in~$W_i$ that have neighbours in~$V^0_{i+2}$ (and are therefore anti-complete to~$V_{i+3}$ and complete to~$V^-_{i+2}$),
\item $W^3_i$ the set of vertices in~$W_i$ that have neighbours in~$V^0_{i+3}$ (and are therefore anti-complete to~$V_{i+2}$ and complete to~$V^+_{i+3}$) and
\item $W^0_i$ the set of vertices in $W_i\setminus (W^2_i\cup W^3_i)$ (which are therefore anti-complete to $V^0_{i+2} \cup V^0_{i+3}$).
\end{itemize}

\clm{\label{clm4:Wi0Vi+Vi--empty}For $i \in \{1,\ldots,5\}$, we may assume that $W_i^0 \cup V_{i-2}^+ \cup V_{i+2}^-=\emptyset$.}
We first show how to remove all edges from vertices in $G[W_i^0 \cup V_{i-2}^+ \cup V_{i+2}^-]$ to the rest of~$G$ by using three bipartite complementations.

By Claim~\ref{clm4:V_i-1WiV_i+1-indep}, $W^0_i$ is anti-complete to~$W^2_i$, $W^3_i$, $V_{i-1}$ and~$V_{i+1}$.
By Claim~\ref{clm4:Wi-anti-Vi-2-Vi+2+}, $W^0_i$ is anti-complete to~$V_{i+3}^-$ and~$V_{i+2}^+$.
By definition, $W^0_i$ is anti-complete to~$V^0_{i+2}$ and~$V^0_{i+3}$.
By Claim~\ref{clm4:Wi-Wi+1-comp}, $W^0_i$ is complete to $W_{i-1}$ and~$W_{i+1}$.
By Claim~\ref{clm4:Vi-Wi-triv}, $W^0_i$ is either complete or anti-complete to~$V_i$.
By Claim~\ref{clm4:Vi-Wi+2-triv}, $W^0_i$ is either complete or anti-complete to~$W_{i+2}$ and either complete or anti-complete to~$W_{i+3}$.
By definition of~$W_i$, $v_i$ is complete to~$W_i$ and $C \setminus \{v_i\}$ is anti-complete to~$W_i$.
Therefore, by applying at most one bipartite complementation, we can remove all edges with one end-vertex in~$W^0_i$ and the other end-vertex outside $W_i^0 \cup V_{i-2}^+ \cup V_{i+2}^-$.

By Claim~\ref{clm4:V_i-1WiV_i+1-indep}, $V_{i+2}^-$ is anti-complete to $V_i$, $V_{i-1}$, $V_{i+2}^+$, $V_{i+2}^0$, $W_{i+1}$ and~$W_{i+3}$.
By Claim~\ref{clm4:Vi-Wi-triv}, $V_{i+2}^-$ is either complete or anti-complete to~$W_{i+2}$.
By Claim~\ref{clm4:Wi-anti-Vi-2-Vi+2+}, $V_{i+2}^-$ is anti-complete to~$W_{i-1}$.
By definition, $V_{i+2}^-$ is complete to $V_{i+1}$.
By definition of~$V_{i+3}^0$ and~$V_{i+3}^-$, $V_{i+2}^-$ is complete to~$V_{i+3}^0$ and~$V_{i+3}^-$.
By Claim~\ref{clm4:x-in-Wi-nbr-in-W0+_i+2-then-anti-Vi+3-comp-V-_i+2}, $V_{i+2}^-$ is complete to~$W^2_i$ and anti-complete to~$W^3_i$.
By definition of~$V_{i+2}$, $\{v_{i+1},v_{i+3}\}$ is complete to~$V_{i+2}^-$ and $C \setminus \{v_{i+1},v_{i+3}\}$ is anti-complete to~$V_{i+2}^-$.
Therefore, by applying at most one bipartite complementation, we can remove all edges with one end-vertex in~$V_{i+2}^-$ and the other end-vertex outside $W_i^0 \cup V_{i-2}^+ \cup V_{i+2}^-$.
Symmetrically, by applying at most one bipartite complementation, we can remove all edges with one end-vertex in~$V_{i-2}^+$ and the other end-vertex outside $W_i^0 \cup V_{i-2}^+ \cup V_{i+2}^-$.

Thus we can apply three bipartite complementations to disconnect $G[W_i^0 \cup V_{i-2}^+ \cup V_{i+2}^-]$ from the rest of the graph.
By Claim~\ref{clm4:V_i-1WiV_i+1-indep}, $W_i^0$, $V_{i-2}^+$ and~$V_{i+2}^-$ are independent sets.
Therefore, by Claim~\ref{clm4:W_iVi+2Vi+3-no-3P1}, $G[W_i^0 \cup V_{i-2}^+ \cup V_{i+2}^-]$ is a curious graph.
This completes the proof of Claim~\ref{clm4:Wi0Vi+Vi--empty}.

\bigskip
\noindent
By the Claim~\ref{clm4:Wi0Vi+Vi--empty}, the only sets that remain non-empty are ones of form~$W_i^3$, $W_i^2$ or~$V_i^0$.
We prove the following claim.

\clm{\label{clm4:Wi3Wi2Vi0-empty}For $i \in \{1,\ldots,5\}$, we may assume that $W_i^3 \cup W_{i+1}^2\cup V_{i-2}^0=\emptyset$.}
We first show how to remove all edges from vertices in $G[W_i^3 \cup W_{i+1}^2\cup V_{i-2}^0]$ to the rest of the graph by using finitely many bipartite complementations.

By Claim~\ref{clm4:V_i-1WiV_i+1-indep}, $W_i^3$ is anti-complete to~$W_i^2$, $V_{i-1}^0$ and~$V_{i+1}^0$.
By Claim~\ref{clm4:Wi-Wi+1-comp}, $W_i^3$ is complete to~$W_{i-1}$ and~$W_{i+1}^3$.
By Claim~\ref{clm4:Vi-Wi+2-triv}, $W_i^3$ is either complete or anti-complete to~$W_{i+2}$ and either complete or anti-complete to~$W_{i-2}$.
By Claim~\ref{clm4:Vi-Wi-triv}, $W_i^3$ is either complete or anti-complete to~$V_i^0$.
By Claim~\ref{clm4:x-in-Wi-nbr-in-W0+_i+2-then-anti-Vi+3-comp-V-_i+2}, $W_i^3$ is anti-complete to~$V_{i+2}^0$.
By definition of~$W_i$, $v_i$ is complete to~$W_i^3$ and $C \setminus \{v_i\}$ is anti-complete to~$W_i^3$.
Therefore, by applying at most one bipartite complementation, we can remove all edges with one end-vertex in~$W_i^3$ and the other end-vertex outside $W_i^3 \cup W_{i+1}^2\cup V_{i-2}^0$.
Symmetrically, by applying at most one bipartite complementation, we can remove all edges with one end-vertex in~$W_{i+1}^2$ and the other end-vertex outside $W_i^3 \cup W_{i+1}^2\cup V_{i-2}^0$.

By Claim~\ref{clm4:V_i-1WiV_i+1-indep}, $V_{i-2}^0$ anti-complete to~$V_i$, $W_{i-1}$, $W_{i+2}$, $V_{i+1}$.
By Claim~\ref{clm4:Vi-Wi-triv}, $V_{i-2}^0$ is either complete or anti-complete to~$W_{i-2}$.
By definition, $V_{i-2}^0$ is complete to~$V_{i-1}^0$ and~$V_{i-3}^0$.
By Claim~\ref{clm4:x-in-Wi-nbr-in-W0+_i+2-then-anti-Vi+3-comp-V-_i+2}, $V_{i-2}^0$ is anti-complete to~$W_i^2$ and~$W_{i+1}^3$.
By definition of~$V_{i-2}$, $\{v_{i-1},v_{i-3}\}$ is complete to~$V_{i-2}^0$ and $C \setminus \{v_{i-1},v_{i-3}\}$ is anti-complete to~$V_{i-2}^0$.
Therefore, by applying at most one bipartite complementation, we can remove all edges with one end-vertex in~$V_{i-2}^0$ and the other end-vertex outside $W_i^3 \cup W_{i+1}^2\cup V_{i-2}^0$.

Thus we can apply three bipartite complementations to disconnect $G[W_i^3 \cup W_{i+1}^2\cup V_{i-2}^0]$ from the rest of the graph.
By Claim~\ref{clm4:V_i-1WiV_i+1-indep}, $W_i^3$, $W_{i+1}^2$ and~$V_{i-2}^0$ are independent sets.
By Claim~\ref{clm4:Wi-Wi+1-comp}, $W_i^3$ is complete to~$W_{i+1}^2$, so $G[W_i^3 \cup W_{i+1}^2\cup V_{i-2}^0]$ is a curious graph.
This completes the proof of Claim~\ref{clm4:Wi3Wi2Vi0-empty}.

\medskip
\noindent
By Claim~\ref{clm4:Wi3Wi2Vi0-empty}, we may assume that~$G$ only contains the five vertices in the cycle~$C$.
We delete these vertices.

\bigskip
\noindent
To complete the proof, it remains to verify the number of operations applied and the number of obtained curious graphs.
In Claim~\ref{clm4:U-empty}, we apply one bipartite complementation and obtain one curious graph.
In Claim~\ref{clm4:Wi0Vi+Vi--empty}, for each $i \in \{1,\ldots,5\}$, we apply three bipartite complementations and obtain one curious graph.
In Claim~\ref{clm4:Wi3Wi2Vi0-empty}, for each $i \in \{1,\ldots,5\}$, we apply three bipartite complementations and obtain one curious graph.
Finally, we apply five vertex deletions to delete the original cycle~$C$ from the graph.
This leads to a total of five vertex deletions, $1+(5\times 3)+(5 \times 3)=31$ bipartite complementations and $1+5+5=11$ obtained curious graphs.\qed
\end{proof}

We now prove our second structural lemma.

\begin{lemma}\label{lem:K3P4+P2C5}
Given any prime $(K_3,P_2+\nobreak P_4)$-free graph~$G$ that contains an induced~$C_5$, we can apply at most 2570 vertex deletions and at most 459 bipartite complementation operations to obtain a graph~$H$ that is the disjoint union of at most 19 $(K_3,P_2+\nobreak P_4)$-free curious graphs and at most one $3$-uniform graph.
\end{lemma}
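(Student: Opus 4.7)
The plan is to follow the strategy of Lemma~\ref{lem:K3P5+P1C5} but replace $P_1+P_5$-freeness by $P_2+P_4$-freeness, using the extra hypothesis that $G$ is prime to control configurations that cannot be ruled out by the forbidden-subgraph argument alone. Fix an induced $C_5$ with vertices $v_1,\ldots,v_5$ in cyclic order (indices mod~$5$). Since $G$ is $K_3$-free, each vertex off the cycle has at most two neighbours on $C$ and any two such neighbours are non-consecutive, so the same eleven-set partition as in Lemma~\ref{lem:K3P5+P1C5} applies: the set $U$ of vertices anti-complete to~$C$, the sets $W_1,\ldots,W_5$ (where $W_i$ consists of the vertices whose unique neighbour on $C$ is $v_i$), and $V_1,\ldots,V_5$ (where $V_i$ consists of the vertices with neighbours exactly $\{v_{i-1},v_{i+1}\}$ on~$C$). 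Triangle-freeness immediately gives that each $V_{i-1}\cup W_i\cup V_{i+1}$ is independent.

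First I would re-derive analogues of Claims~\ref{clm4:Wi-Wi+1-comp}--\ref{clm4:Vi-dominates-Vi-1orVi+1} of Lemma~\ref{lem:K3P5+P1C5} under $P_2+P_4$-freeness. For each pair of sets in the partition, the goal is to conclude that either the pair is complete, anti-complete, admits a chain-like decomposition, or splits uniformly according to a fixed dichotomy. The individual arguments proceed in the same style -- assume a violation and exhibit an induced $P_2+P_4$ -- but one must now carefully choose where to place the isolated edge $P_2$ relative to the path $P_4$, which multiplies the number of case distinctions. As in Lemma~\ref{lem:K3P5+P1C5} this should produce a refinement of each $V_i$ into subsets $V_i^0,V_i^-,V_i^+$, together with a refinement of each $W_i$ indexed by which of $V_{i-2}^0,V_{i+2}^0$ it hits, and each ``impure'' sub-class should be separable from the rest of the graph by a constant number of bipartite complementations; the detached pieces will be curious $(K_3,P_2+P_4)$-free graphs and account for the bulk of the~$19$ curious components.

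The main obstacle is the treatment of $U$ together with any ``residue'' that the above bipartite complementations fail to simplify. Under the stronger forbidden subgraph $P_1+P_5$, Claim~\ref{clm4:U-empty} allowed one to empty $U$ in a single step, but under the weaker $P_2+P_4$ this breaks down: a $U$-vertex can attach to the $W_i$'s and $V_i$'s in several non-trivial patterns. However, $P_2+P_4$-freeness still limits each $U$-vertex (and each vertex of the residue) to one of a constant number of neighbourhood types with respect to the labelled partition, and primality of $G$ forbids non-trivial modules and in particular false twins within any such type class. For each type class I would therefore keep only a bounded number of representatives, deleting all others; this step accounts for the large vertex-deletion budget of $2570$ (collecting the five cycle vertices plus bounded numbers of representatives from $U$, from the $V_i^0$'s and from the refined $W_i$ subclasses, summed over all types). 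After these deletions and the bipartite complementations from the previous paragraph, what remains is a single configuration with a partition into three classes, fixed pairwise adjacency patterns and a fixed internal structure on each class -- that is, a $3$-uniform graph in the sense of Section~\ref{sec:prelim}.

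Finally I would tally the operations across all the preceding steps to verify the bounds of $2570$ vertex deletions, $459$ bipartite complementations, $19$ curious pieces and at most one $3$-uniform piece. The hardest part by far is the third paragraph above -- wringing enough structural control out of $P_2+P_4$-freeness and primality to reduce $U$ together with the residual non-curious part to a bounded number of orbits and to recognise the final residue as $3$-uniform. Once this is done, the remaining work is a longer but mechanical version of the case analysis carried out in the proof of Lemma~\ref{lem:K3P5+P1C5}.
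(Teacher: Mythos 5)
Your overall skeleton matches the paper's: the same eleven-set partition around the $C_5$, a refinement of each~$V_i$ into $V_i^0,V_i^-,V_i^+$ and of each~$W_i$ according to its attachment to $V_{i+2},V_{i+3}$, separation of the ``impure'' pieces by boundedly many bipartite complementations into curious graphs, with~$U$ handled via primality and a $3$-uniform piece appearing at the end. But the central mechanism you propose for $U$ and the residue --- classify vertices into constantly many neighbourhood types, then ``keep only a bounded number of representatives per type, deleting all others'' --- does not work and cannot be repaired within the stated budget. The type classes in question ($U$ itself, the sets $V_i^0$, the refined $W_i$-subclasses) can be arbitrarily large in a prime $(K_3,P_2+P_4)$-free graph: primality excludes false twins and non-trivial modules, but vertices of the same coarse type are still distinguished by vertices \emph{inside} the partition sets, so no bound on class sizes follows, and deleting ``all others'' would require unboundedly many vertex deletions (the lemma allows only $2570$). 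Moreover, the $3$-uniform piece is not a small residue of representatives: in the paper it is the (possibly arbitrarily large) graph $G[U\cup V^*]$, where $V^*_i$ is the set of $V_i$-vertices with neighbours in~$U$, and its $3$-uniformity is obtained by module arguments --- primality forces each component of $G[U\cup V_i^*]$ to pair a single $V_i^*$-vertex with a single $U$-vertex once $|V_i^*|\geq 2$, after which one bipartite complementation turns the components into induced subgraphs of~$K_3$. The remaining graph is cut into curious pieces that are likewise unbounded in size; nothing is shrunk to bounded size.

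Relatedly, your accounting of the vertex deletions is based on this incorrect step, and you miss the second, essential use of primality. In the paper, apart from the five cycle vertices and at most five exceptional vertices of~$V^*$, all $5\times 512$ remaining deletions serve one purpose: primality gives that no independent set of more than $512$ vertices in the leftover graph is a module (each such vertex has one of at most $2^9$ neighbourhoods in $U\cup V^*\cup W^*$), which yields that among any $513$ vertices of~$V_i$ two together dominate $V_{i-1}$ or $V_{i+1}$, and hence deleting at most $512$ vertices per~$V_i$ restores the property that every vertex of~$V_i$ dominates $V_{i-1}$ or $V_{i+1}$. This is exactly the analogue of the domination claim that comes for free under $P_1+P_5$-freeness in Lemma~\ref{lem:K3P5+P1C5} but fails under $P_2+P_4$-freeness, and without it the partition into $V_i^0,V_i^-,V_i^+$ that your second paragraph relies on is not available. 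You also pass over the non-simple pairs $\{W_i,W_{i+2}\}$ (the sets $W_i^+,W_i^-$ and their separation), which is another place where real work is needed. So while the high-level plan is the right one, the proof as proposed has a genuine gap at its core step.
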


\begin{proof}
\setcounter{ctrclaim}{0}
Let~$G$ be a prime $(K_3,P_2+\nobreak P_4)$-free graph that contains an induced cycle~$C$ on the vertices $v_1,\ldots,v_5$, listed in order along the cycle.
To aid notation, for the remainder of the proof subscripts on vertices and on sets should be interpreted modulo~$5$.
We will show how to use vertex deletions and bipartite complementations to partition the graph into a disjoint union of curious graphs and $3$-uniform graphs.
At the end of the proof, we will verify the number of operations used.

Since~$G$ is $K_3$-free, every vertex not on the cycle~$C$ has at most two neighbours on~$C$ and if it does have two neighbours on~$C$, then these neighbours must be non-consecutive vertices of the cycle.
We may therefore partition the vertices in $V(G) \setminus V(C)$ into eleven sets as follows:
\begin{itemize}
\item $U$ is the set of vertices anti-complete to~$C$,
\item for $i \in \{1,\ldots,5\}$, $W_i$ is the set of vertices whose unique neighbour on~$C$ is~$v_i$ and
\item for $i \in \{1,\ldots,5\}$, $V_i$ is the set of vertices adjacent to~$v_{i-1}$ and~$v_{i+1}$ and non-adjacent to the rest of~$C$.
\end{itemize}

\noindent
We start by showing how to use a bounded number of vertex deletions and bipartite complementations to disconnect a $3$-uniform or bipartite induced subgraph containing~$U$ from the rest of the graph.
This will enable us to assume that~$U=\emptyset$.
In aid of this, we prove a series of claims.

\clm{\label{clm3:UWi-indep}For $i \in \{1,\ldots,5\}$, $U \cup W_i$ is an independent set.}
Indeed, if $x,y \in U \cup W_i$ are adjacent then $G[x,y,v_{i+1},v_{i+2},v_{i+3},v_{i+4}]$ is a $P_2+\nobreak P_4$, a contradiction.
This completes the proof of Claim~\ref{clm3:UWi-indep}.

\clm{\label{clm3:V_i-1WiV_i+1-indep}For $i \in \{1,\ldots,5\}$, $V_{i-1} \cup W_i \cup V_{i+1}$ is an independent set.}
Indeed, if $x,y \in V_{i-1} \cup W_i \cup V_{i+1}$ are adjacent then $G[x,y,v_i]$ is a~$K_3$, a contradiction.
This completes the proof of Claim~\ref{clm3:V_i-1WiV_i+1-indep}.

\clm{\label{clm3:UVi-P4-free}For $i \in \{1,\ldots,5\}$, $G[U \cup V_i]$ is $P_4$-free.}
Indeed, if $x_1,x_2,x_3,x_4 \in U \cup V_i$ induce a~$P_4$ then $G[v_{i+2},v_{i+3},x_1,x_2,x_3,x_4]$ would be a $P_2+\nobreak P_4$, a contradiction.
This completes the proof of Claim~\ref{clm3:UVi-P4-free}.

\begin{sloppypar}
\clm{\label{clm3:x-in-Vi-triv-to-Wi}For $i \in \{1,\ldots,5\}$, every vertex in~$V_i$ is either complete or anti-complete to~$W_i$.}
Indeed, suppose for contradiction that $x \in V_i$ has a neighbour $y \in W_i$ and a non-neighbour $y' \in W_i$.
By Claim~\ref{clm3:UWi-indep}, $y$ is non-adjacent to~$y'$.
It follows that $G[v_{i+2},v_{i+3},x,y,v_i,y']$ is a $P_2+\nobreak P_4$, a contradiction.
This completes the proof of Claim~\ref{clm3:x-in-Vi-triv-to-Wi}.
\end{sloppypar}

\medskip
\noindent
For $i \in \{1,\ldots,5\}$, let~$V_i^*$ denote the set of vertices in~$V_i$ that have neighbours in~$U$ and let $V^0_i=V_i\setminus V^*_i$.
Let $V^*=\bigcup V^*_i$.
We will show how to use five bipartite complementations to separate $G[U \cup V^*]$ from the rest of the graph.
Note that by definition and by Claim~\ref{clm3:UWi-indep}, no vertex of~$U$ has a neighbour outside~$V^*$.
It is therefore sufficient to show how to disconnect~$V^*$ from the vertices outside $U\cup V^*$.

In fact, for vertices in~$V^*_i$ we can prove a stronger version of Claim~\ref{clm3:x-in-Vi-triv-to-Wi}.

\begin{sloppypar}
\clm{\label{clm3:Vistar-anti-Wi}For $i \in \{1,\ldots,5\}$, $V_i^*$ is anti-complete to~$W_i$.}
Indeed, suppose for contradiction that $x \in V_i^*$ is adjacent to $y \in W_i$ and let $z \in U$ be a neighbour of~$x$.
By Claim~\ref{clm3:UWi-indep}, $y$ is non-adjacent to~$z$.
If follows that $G[v_{i+2},v_{i+3},v_i,y,x,z]$ is a $P_2+\nobreak P_4$, a contradiction.
This completes the proof of Claim~\ref{clm3:Vistar-anti-Wi}.
\end{sloppypar}

\clm{\label{clm3:V_i-U-nbr-V_i=1Vi+1-distinguishes}For $i \in \{1,\ldots,5\}$, if $x \in V^*_i$ is adjacent to $y \in U$ then every vertex $z \in V_{i-1} \cup V_{i+1}$ has exactly one neighbour in $\{x,y\}$.}
Indeed, suppose $x \in V^*_i$, $y \in U$ and $z \in V_{i-1}$ with~$x$ adjacent to~$y$.
If~$z$ is adjacent to both~$x$ and~$y$ then $G[x,y,z]$ is a~$K_3$, a contradiction.
If~$z$ is non-adjacent to both~$x$ and~$y$ then $G[x,y,v_i,z,v_{i+3},v_{i+2}]$ is a $P_2+\nobreak P_4$, a contradiction.
Claim~\ref{clm3:V_i-U-nbr-V_i=1Vi+1-distinguishes} follows by symmetry.

\clm{\label{clm3:Vistar-compVi-10Vi+10}For $i \in \{1,\ldots,5\}$, $V^*_i$ is complete to $V^0_{i-1} \cup V^0_{i+1}$.}
This follows from Claim~\ref{clm3:V_i-U-nbr-V_i=1Vi+1-distinguishes} by definition of~$V^*_i$ and $V^0_{i-1} \cup V^0_{i+1}$.
This completes the proof of Claim~\ref{clm3:Vistar-compVi-10Vi+10}.

\clm{\label{clm3:Vistar-compWi-2Wi+2}For $i \in \{1,\ldots,5\}$, $V^*_i$ is complete to $W_{i-2} \cup W_{i+2}$.}
Suppose, for contradiction, that $x \in V^*_i$ is non-adjacent to a vertex $y \in W_{i+2}$ and let $z \in U$ be a neighbour of~$x$.
By Claim~\ref{clm3:UWi-indep}, $y$ is non-adjacent to~$z$.
Therefore $G[y,v_{i+2},v_i,v_{i-1},x,z]$ is a $P_2+\nobreak P_4$, a contradiction.
Claim~\ref{clm3:Vistar-compWi-2Wi+2} follows by symmetry.

\clm{\label{clm3:VstarU-seperable}For $i \in \{1,\ldots,5\}$, $V_i^*$ is complete to $V^0_{i-1} \cup V^0_{i+1} \cup W_{i-2} \cup W_{i+2} \cup \{v_{i-1}, v_{i+1}\}$ and anti-complete to every vertex of $G \setminus (V^* \cup U)$ that is not in $V^0_{i-1} \cup V^0_{i+1} \cup W_{i-2} \cup W_{i+2} \cup \{v_{i-1}, v_{i+1}\}$.
Furthermore, no vertex in~$U$ has a neighbour in $G \setminus (V^* \cup U)$.}
By definition of~$V_i^*$, every vertex in~$V_i^*$ is complete to $\{v_{i-1}, v_{i+1}\}$ and anti-complete to $C \setminus \{v_{i-1}, v_{i+1}\}$.
By Claim~\ref{clm3:V_i-1WiV_i+1-indep}, $V^*_i$ is anti-complete to $W_{i-1} \cup W_{i+1} \cup V^0_{i-2} \cup V^0_{i+2} \cup V^0_i$.
By Claim~\ref{clm3:Vistar-compVi-10Vi+10}, $V^*_i$ is complete to $V^0_{i-1} \cup V^0_{i+1}$.
By Claim~\ref{clm3:Vistar-anti-Wi}, $V^*_i$ is anti-complete to~$W_i$.
By Claim~\ref{clm3:Vistar-compWi-2Wi+2}, $V^*_i$ is complete to $W_{i-2} \cup W_{i+2}$.
By definition of~$U$, every vertex in~$U$ is anti-complete to~$C$.
By Claim~\ref{clm3:UWi-indep}, $U$ is anti-complete to~$W_j$ for all $j \in \{1,\ldots,5\}$ and by definition of~$V_j^0$, $U$ is non-adjacent to~$V_j^0$ for all $j \in \{1,\ldots,5\}$.
This completes the proof of Claim~\ref{clm3:VstarU-seperable}.

\medskip
\noindent
Recall that at the start of the proof we assumed that the graph~$G$ is prime.
By Claim~\ref{clm3:VstarU-seperable} if for each $i \in \{1,\ldots,5\}$ we apply a bipartite complementation between~$V_i^*$ and $V^0_{i-1} \cup V^0_{i+1} \cup W_{i-2} \cup W_{i+2} \cup \{v_{i-1}, v_{i+1}\}$ then this would remove all edges between $V^* \cup U$ and $V(G) \setminus (V^* \cup U)$ and leave all other edges unchanged.
However, after doing this, the result might be a graph that is not prime.
As such, we will not use Claim~\ref{clm3:VstarU-seperable} to disconnect $G[V^* \cup U]$ from the rest of the graph at this stage, but wait until later in the proof, when we no longer require the property that~$G$ is prime.
Next, we analyse the structure of $G[V^* \cup U]$.
Note that $V_1,\ldots,V_5$ and~$U$ are independent sets by Claims~\ref{clm3:V_i-1WiV_i+1-indep} and~\ref{clm3:UWi-indep}.
By Claim~\ref{clm3:UVi-P4-free}, this implies that every component of $G[U \cup V_i]$ is a complete bipartite graph.
Fix $i \in \{1,\ldots,5\}$ and consider a component~$C'$ of $G[U \cup V_i^*]$ containing at least one vertex of~$V^*_i$.
Let $T= V(C') \cap V^*_i$ and $S = V(C') \cap U$.
Note that since every vertex of~$V^*_i$ contains at least one neighbour in~$U$ if follows that $|S| \geq 1$.
We prove a series of claims.

\clm{\label{clm3:T-small}$|T| = 1$.}
We first show that~$T$ is a module in~$G$.
Since $T \subseteq V_i^*$, if a vertex~$v$ in~$G$ can distinguish two vertices in~$T$ then $v \in V^* \cup U$ by Claim~\ref{clm3:VstarU-seperable}.
Since~$U$ and~$V_i^*$ are independent sets by Claims~\ref{clm3:UWi-indep} and~\ref{clm3:V_i-1WiV_i+1-indep}, respectively and every component of $G[U \cup V_i^*]$ is a complete bipartite graph by Claim~\ref{clm3:UVi-P4-free}, no vertex of $(U \cup V_i^*) \setminus T$ can distinguish two vertices in~$T$.
By Claim~\ref{clm3:V_i-1WiV_i+1-indep}, no vertex of $V_{i-2}^* \cup V_{i+2}^*$ can distinguish two vertices in~$T$.
Because $G[S \cup T]$ is a complete bipartite graph, Claim~\ref{clm3:V_i-U-nbr-V_i=1Vi+1-distinguishes} implies that every vertex of $V_{i-1}^*\cup V_{i+1}^*$ is either complete to~$S$ and anti-complete to~$T$ or anti-complete to~$S$ and complete to~$T$, so no vertex of $V_{i-1}^*\cup V_{i+1}^*$ can distinguish two vertices in~$T$.
It follows that~$T$ is a module in~$G$.
Since~$G$ is prime, we conclude that $|T|=1$.
This completes the proof of Claim~\ref{clm3:T-small}.

\clm{\label{clm3:Vistar-large-S-small}If $|V^*_i|>1$, then $|S|=1$.}
We show that if $|V^*_i|>1$ then~$S$ is a module in~$G$.
Suppose, for contradiction, that~$S$ is not a module in~$G$.
Since $S \subseteq U$, if a vertex~$v$ in~$G$ can distinguish two vertices in~$S$ then $v \in V^* \cup U$ by Claim~\ref{clm3:VstarU-seperable}.
Since~$U$ and~$V_i^*$ are independent sets by Claims~\ref{clm3:UWi-indep} and~\ref{clm3:V_i-1WiV_i+1-indep}, respectively and every component of $G[U \cup V_i^*]$ is a complete bipartite graph by Claim~\ref{clm3:UVi-P4-free}, no vertex of $(U \cup V_i^*) \setminus S$ can distinguish two vertices in~$S$.
Because $G[S \cup T]$ is a complete bipartite graph, Claim~\ref{clm3:V_i-U-nbr-V_i=1Vi+1-distinguishes} implies that every vertex of $V_{i-1}^*\cup V_{i+1}^*$ is either complete to~$S$ and anti-complete to~$T$ or anti-complete to~$S$ and complete to~$T$, so no vertex of $V_{i-1}^*\cup V_{i+1}^*$ can distinguish two vertices in~$S$.
Therefore, there must be a vertex in $V_{i-2} \cup V_{i+2}$ that distinguishes two vertices of~$S$.
Without loss of generality, assume that there is a vertex $x \in V_{i+2}$ that is adjacent to $y \in S$ and non-adjacent to $y' \in S$.
Let~$z$ be a vertex of~$T$, i.e. a vertex of~$V^*_i$ that is adjacent to both~$y$ and~$y'$.
Note that $|V^*_i|>1$ and any two vertices of~$V^*_i$ belong to different connected components of $G[V^*_i\cup U]$ by Claim~\ref{clm3:T-small}.
This means that there must be a vertex $z' \in V^*_i \setminus \{z\}$ and it must have a neighbour $y'' \in U \setminus S$.
Note that~$x$ is non-adjacent to~$z$ and~$z'$ by Claim~\ref{clm3:V_i-1WiV_i+1-indep}.
It follows that $G[y',z,v_{i+2},v_{i+3},x,y'']$ or $G[y'',z',x,y,z,y']$ is a $P_2+\nobreak P_4$ if~$x$ is adjacent or non-adjacent to~$y''$, respectively.
This contradiction implies that~$S$ is indeed a module.
Since~$G$ is prime, we conclude that $|S|=1$.
This completes the proof of Claim~\ref{clm3:Vistar-large-S-small}.

\clm{\label{clm3:Vistar-large-V_istar-2Vi+2star-small}If $|V^*_i|>1$, then $|V^*_{i-2}|\leq 1$ and $|V^*_{i+2}|\leq 1$.}
Suppose, for contradiction, that~$V^*_i$ and~$V^*_{i+2}$ each contain at least two vertices.
Then by Claims~\ref{clm3:T-small} and~\ref{clm3:Vistar-large-S-small}, every connected component of $G[V_i^* \cup U]$ containing a vertex of~$V_i^*$ consists of a single edge and since~$V^*_i$ contains at least two vertices, there must be at least two such components.
Similarly, there must be two such components in $G[V_{i+2}^* \cup U]$.
Therefore we can find $x \in V^*_i$ adjacent to $y \in U$ and $y' \in U \setminus \{y\}$ adjacent to $z \in V^*_{i+2}$.
By Claim~\ref{clm3:V_i-1WiV_i+1-indep}, $x$ is non-adjacent to~$z$, so $G[x,y,y',z,v_{i+3},v_{i+2}]$ is a $P_2+\nobreak P_4$, a contradiction.
Claim~\ref{clm3:Vistar-large-V_istar-2Vi+2star-small} follows by symmetry.

\clm{\label{clm3:VstarcupU-done}There is a set $V^{**} \subseteq V^*$ with $|V^{**}| \leq 5$ such that $G[(V^* \setminus V^{**}) \cup U]$ is either bipartite or $3$-uniform.}
Let~$V^{**}$ be the union of all sets~$V^*_i$ with $i \in \{1,\ldots,5\}$ such that $|V^*_i|=1$.
Then $|V^{**}| \leq 5$.
Consider the graph $H=G[(V^* \setminus V^{**}) \cup U]$.
For every $i \in \{1,\ldots,5\}$ the set~$V_i^*$ either contains zero or at least two vertices in~$H$.
If at most one set~$V_i^*$ contains two vertices in~$H$ then by Claims~\ref{clm3:UWi-indep} and~\ref{clm3:V_i-1WiV_i+1-indep}, $H$ is a bipartite graph and we are done.
It remains to consider the case when two sets~$V_i^*$ and~$V_j^*$ contain two vertices in~$H$.
In this case, by Claim~\ref{clm3:Vistar-large-V_istar-2Vi+2star-small}, $v_i$ and~$v_j$ must be consecutive vertices of the cycle~$C$ and all other sets~$V_k^*$ contain no vertices of~$H$, so we may assume $j=i+1$ and $H=G[V_i^* \cup V_{i+1}^* \cup U]$.
We will show that this graph is $3$-uniform.

By Claims~\ref{clm3:UWi-indep} and~\ref{clm3:V_i-1WiV_i+1-indep}, $V_i^*$, $V_{i+1}^*$ and~$U$ are independent sets.
By Claims~\ref{clm3:T-small} and \ref{clm3:Vistar-large-S-small}, every vertex of $V_i^* \cup V_{i+1}^*$ has exactly one neighbour in~$U$ and every vertex of~$U$ has at most one neighbour in~$V_i^*$ and at most one neighbour in~$V_{i+1}^*$.

If $x \in V_i^*$ and $y \in V_{i+1}^*$ have a common neighbour $z \in U$ then~$x$ must be non-adjacent to~$y$ by Claim~\ref{clm3:V_i-U-nbr-V_i=1Vi+1-distinguishes}.
If $x \in V_i^*$ is non-adjacent to $y \in V_{i+1}^*$ and~$z$ is the neighbour of~$x$ in~$U$ then~$y$ must adjacent to~$z$ by Claim~\ref{clm3:V_i-U-nbr-V_i=1Vi+1-distinguishes}.
Therefore a vertex $x \in V_i^*$ is non-adjacent to a vertex $y \in  V_{i+1}^*$ if and only if they have the same unique neighbour $z \in U$.

Now applying a bipartite complementation between~$V_i^*$ and~$V_{i+1}^*$, we obtain a graph every component of which is an induced subgraph of~$K_3$, with at most one vertex of each component in each of $V_i^*$, $V_{i+1}^*$ and~$U$.
Therefore~$H$ is a $3$-uniform graph.
(In terms of the definition of $3$-uniform graphs, we have $k=3$, $F_k=K_3$ and~$K$ is the $3 \times 3$ matrix which has $K_{1,2}=K_{2,1}=1$ and is zero everywhere else.)
This completes the proof of Claim~\ref{clm3:VstarcupU-done}.

\bigskip
\noindent
Claims~\ref{clm3:VstarU-seperable} and~\ref{clm3:VstarcupU-done} would allow us to remove $G[V^*\cup U]$ from the graph.
As discussed earlier, we do not actually do so at this stage, so that we can preserve the property that~$G$ is prime.
Informally, we may think of the vertices in $V^*\cup U$ as having been ``dealt with'' and we now concern ourselves with the remainder of the graph i.e. the vertices in $C \cup V_1^0 \cup \cdots \cup V_5^0 \cup W_1 \cup \cdots \cup W_5$.
First, we look at the sets~$W_i$ and the edges between these sets.

\newpage
\clm{\label{clm3:Wi-W_i+1-comp-anti}For $i \in \{1,\ldots,5\}$, $W_i$ is either complete or anti-complete to~$W_{i+1}$.}
Suppose, for contradiction, that $x \in W_i$ is adjacent to $y \in W_{i+1}$ and non-adjacent to $y' \in W_{i+1}$.
By Claim~\ref{clm3:UWi-indep}, $y$ is non-adjacent to~$y'$.
Then $G[v_{i-1},v_{i-2},x,y,v_{i+1},y']$ is a $P_2+\nobreak P_4$, a contradiction.
By symmetry, this completes the proof of Claim~\ref{clm3:Wi-W_i+1-comp-anti}.

\medskip
\noindent
In fact, we can strengthen Claim~\ref{clm3:Wi-W_i+1-comp-anti} as follows.

\clm{\label{clm3:Wi-W_i-1W_i+1-comp-anti}For $i \in \{1,\ldots,5\}$, $W_i$ is either complete or anti-complete to $W_{i-1} \cup W_{i+1}$.}
If~$W_{i-1}$ or~$W_{i+1}$ is empty then the claim follows by Claim~\ref{clm3:Wi-W_i+1-comp-anti}.
Now suppose, for contradiction, that $x \in W_i$ has a neighbour $y \in W_{i-1}$ and a non-neighbour $z \in W_{i+1}$.
Then $G[v_{i+2},v_{i+3},v_i,x,y,z]$ or $G[x,y,z,v_{i+1},v_{i+2},v_{i+3}]$ is a $P_2+\nobreak P_4$ if~$y$ is adjacent or non-adjacent to~$z$, respectively.
This contradiction completes the proof of Claim~\ref{clm3:Wi-W_i-1W_i+1-comp-anti}.

\clm{\label{clm3:Wi-W_i+1-P4-free}For $i \in \{1,\ldots,5\}$, $G[W_i \cup W_{i+2}]$ is $P_4$-free.}
If $G[W_i \cup W_{i+2}]$ contains a~$P_4$, say on vertices $w,x,y,z$ then $G[v_{i+3},v_{i+4},w,x,y,z]$ is a $P_2+\nobreak P_4$, a contradiction.
This completes the proof of Claim~\ref{clm3:Wi-W_i+1-P4-free}.

\medskip
\noindent
Since~$W_i$ and~$W_{i+2}$ are independent sets by Claim~\ref{clm3:UWi-indep}, it follows from Claim~\ref{clm3:Wi-W_i+1-P4-free} that every component of $G[W_i\cup W_{i+2}]$ is a complete bipartite graph.
Suppose~$X$ and~$Y$ are independent sets such that every component of $G[X \cup Y]$ is a complete bipartite graph.
We say that a pair $\{X,Y\}$ is {\em non-simple} if the number of non-trivial components (i.e. those containing at least one edge) in $G[X\cup Y]$ is at least two.
Otherwise, we will say that the pair $\{X,Y\}$ is {\em simple}.
Note that if $\{X,Y\}$ is simple then all the edges in $G[X \cup Y]$ can be removed by using at most one bipartite complementation.

\clm{\label{clm3:Wi+2-anti-Wi+3-then-Wi-comp-Wi+2Wi+3}For $i \in \{1,\ldots,5\}$, if~$W_i$, $W_{i+2}$ and~$W_{i-2}$ are all non-empty and~$W_{i+2}$ is anti-complete to~$W_{i-2}$, then~$W_i$ is complete to $W_{i+2} \cup W_{i-2}$.
Furthermore, in this case for all $j \in \{1,\ldots,5\}$, $W_j$ is anti-complete to~$W_{j+1}$ and complete to~$W_{j+2}$.}
Suppose $x \in W_i$, $y \in W_{i+2}$ and $z \in W_{i-2}$ and that $W_{i+2}$ is anti-complete to~$W_{i-2}$.
Suppose, for contradiction, that~$x$ is non-adjacent to~$y$.
Then $G[y,v_{i+2},v_{i-1},v_i,x,z]$ or $G[x,v_i,y,v_{i+2},v_{i-2},z]$ is a $P_2+\nobreak P_4$ if~$x$ is adjacent or non-adjacent to~$z$, respectively.
This contradiction implies that~$x$ is adjacent to~$y$.
We conclude that~$W_i$ is complete to~$W_{i+2}$.
By symmetry, $W_i$ is also complete to~$W_{i-2}$.
Therefore the first part of Claim~\ref{clm3:Wi+2-anti-Wi+3-then-Wi-comp-Wi+2Wi+3} holds.

Now suppose that there is a vertex $w \in W_{i+1}$.
Since~$W_{i-2}$ is anti-complete to~$W_{i+2}$, Claim~\ref{clm3:Wi-W_i-1W_i+1-comp-anti} implies that~$W_{i+2}$ is anti-complete to~$W_{i+1}$ and so~$W_{i+1}$ is anti-complete to~$W_i$.
Applying the first part of Claim~\ref{clm3:Wi+2-anti-Wi+3-then-Wi-comp-Wi+2Wi+3}, we find that~$W_{i+1}$ is complete to~$W_{i-2}$ and if~$W_{i-1}$ is non-empty then~$W_{i+1}$ is complete to~$W_{i-1}$.
Claim~\ref{clm3:Wi+2-anti-Wi+3-then-Wi-comp-Wi+2Wi+3} now follows by symmetry.

\clm{\label{clm3:WiWi+2-and-Wi-2W-non-simple-then-Wi+-1-empty}For $i \in \{1,\ldots,5\}$, if $\{W_i,W_{i+2}\}$ and $\{W_{i-2},W_i\}$ are both non-simple, then~$W_{i-2}$ is complete to~$W_{i+2}$ and $W_{i-1}=W_{i+1}=\emptyset$.}
Suppose $W_i$, $W_{i+2}$ and~$W_{i-2}$ are all non-empty and $\{W_i,W_{i+2}\}$ is non-simple.
By Claim~\ref{clm3:Wi+2-anti-Wi+3-then-Wi-comp-Wi+2Wi+3}, $W_{i-2}$ is not anti-complete to~$W_{i+2}$, so by Claim~\ref{clm3:Wi-W_i+1-comp-anti}, $W_{i-2}$ is complete to~$W_{i+2}$.
Now suppose, for contradiction, that~$W_{i+1}$ contains a vertex~$x$.
By Claim~\ref{clm3:Wi-W_i-1W_i+1-comp-anti}, since~$W_{i-2}$ is complete to~$W_{i+2}$, it follows that~$W_{i+2}$ is complete to~$W_{i+1}$ and therefore~$W_{i+1}$ is complete to~$W_i$.
Since $\{W_i,W_{i+2}\}$ is not simple, there must be adjacent vertices $y \in W_i$ and $z \in W_{i+2}$.
Now $G[x,y,z]$ is a~$K_3$, a contradiction.
Claim~\ref{clm3:WiWi+2-and-Wi-2W-non-simple-then-Wi+-1-empty} follows by symmetry.

\clm{\label{clm3:if-two-non-simple-then-share-a-Wi}$G$ contains at most two non-simple pairs, and if it contains two, they must be as described in Claim~\ref{clm3:WiWi+2-and-Wi-2W-non-simple-then-Wi+-1-empty}.}
Suppose, for contradiction, that the claim is false.
Then for some $i \in \{1,\ldots,5\}$ both $\{W_i,W_{i+2}\}$ and $\{W_{i-1},W_{i+1}\}$ must be non-simple pairs.
Then, by Claim~\ref{clm3:Wi+2-anti-Wi+3-then-Wi-comp-Wi+2Wi+3}, $W_i$ cannot be anti-complete to~$W_{i-1}$, so by Claim~\ref{clm3:Wi-W_i+1-comp-anti}, $W_i$ is complete to~$W_{i-1}$.
Then, by Claim~\ref{clm3:Wi-W_i-1W_i+1-comp-anti}, it follows that~$W_i$ is complete to~$W_{i+1}$.
Since $\{W_{i-1},W_{i+1}\}$ is non-simple, there must be adjacent vertices $x \in W_{i-1}$ and $z \in W_{i+1}$.
Choosing an arbitrary $y \in W_i$, we find that $G[x,y,z]$ is a~$K_3$, a contradiction.
This completes the proof of Claim~\ref{clm3:if-two-non-simple-then-share-a-Wi}.

\bigskip
\noindent
If $\{W_i,W_{i+2}\}$ is a non-simple pair then let~$W_i^+$ be the set of vertices in~$W_i$ that have neighbours in~$W_{i+2}$ and let~$W_{i+2}^-$ be the set of vertices in~$W_{i+2}$ that have neighbours in~$W_i$.
If $\{W_i,W_{i+2}\}$ {\em is} a simple pair then set $W_i^+ = W_{i+2}^-=\emptyset$.
Let $W^*=\bigcup W_i^+ \cup \bigcup W_i^-$.
Next, we look at the edges from the sets~$W_i^+$ and~$W_{i+2}^-$ to the remainder of $G \setminus (V^* \cup U)$ and show how we could apply bipartite complementations to remove these sets from the graph.

\begin{sloppypar}
\clm{\label{clm3:Wi+-Wi--disjoint}For $i \in \{1,\ldots,5\}$, $W_i^+ \cap W_i^- = \emptyset$.}
Indeed, suppose for contradiction that there is a vertex $x \in W_i^+ \cap W_i^-$.
Then $\{W_i,W_{i+2}\}$ and $\{W_{i-2},W_i\}$ must be non-simple pairs and we can choose $y \in W_{i-2}$ and $z \in W_{i+2}$ that are adjacent to~$x$.
By Claim~\ref{clm3:WiWi+2-and-Wi-2W-non-simple-then-Wi+-1-empty}, $W_{i-2}$ is complete to~$W_{i+2}$.
Then $G[x,y,z]$ is a~$K_3$, a contradiction.
Claim~\ref{clm3:Wi+-Wi--disjoint} follows.
\end{sloppypar}

\clm{\label{clm3:WiWi+2-non-simp-Vi-Vi+2}For $i \in \{1,\ldots,5\}$, suppose $\{W_i,W_{i+2}\}$ is a non-simple pair.
Then~$V_{i+2}$ is complete to~$W_{i+2}$ and anti-complete to~$W_i^+$.
Similarly, $V_i$ is complete to~$W_i$ and anti-complete to~$W_{i+2}^-$.}
Suppose, for contradiction, that $x \in V_{i+2}$ has a non-neighbour $z \in W_{i+2}$.
Since $\{W_i,W_{i+2}\}$ is non-simple, there is a vertex $y \in W_i$ that is non-adjacent to~$z$.
Now $G[v_{i+2},z,x,y,v_i,v_{i-1}]$ or $G[v_i,y,z,v_{i+2},v_{i+3},x]$ is a $P_2+\nobreak P_4$ is~$x$ is adjacent or non-adjacent to~$y$, respectively.
This contradiction implies that~$V_{i+2}$ is complete to~$W_{i+2}$.

Now suppose, for contradiction, that $x \in V_{i+2}$ has a neighbour $y \in W_i^+$.
Since $y \in W_i^+$ it must have a neighbour $z \in W_{i+2}$ and by the previous paragraph, $x$ must be adjacent to~$z$.
Now $G[x,y,z]$ is a~$K_3$, a contradiction.
Therefore~$V_{i+2}$ is anti-complete to~$W_i^+$.
Claim~\ref{clm3:WiWi+2-non-simp-Vi-Vi+2} follows by symmetry.

\clm{\label{clm3:WiWi+2-non-simp-Vi+3-Vi+4}For $i \in \{1,\ldots,5\}$, suppose $\{W_i,W_{i+2}\}$ is a non-simple pair.
Then~$V_{i+3}$ is anti-complete to~$W_{i+2}$ and every vertex of~$V_{i+3}$ is either anti-complete to~$W_i$ or complete to~$W_i^+$.
Similarly, $V_{i+4}$ is anti-complete to~$W_i$ and every vertex of~$V_{i+4}$ is either anti-complete to~$W_{i+2}$ or complete to~$W_{i+2}^-$.}
By Claim~\ref{clm3:V_i-1WiV_i+1-indep}, $W_{i+2}$ is anti-complete to~$V_{i+3}$.
Suppose, for contradiction, that the claim does not hold for some vertex $x \in V_{i+3}$.
Then~$x$ must have a neighbour $y \in W_i$ and a non-neighbour $y' \in W_i^+$ and since $\{W_i,W_{i+2}\}$ is non-simple, we may assume that~$y$ and~$y'$ are in different components of $G[W_i \cup W_{i+2}]$.
Indeed, if~$y$ and~$y'$ are in the same component, then there must be a vertex $y'' \in W_i^+$ in a different component of~$G[W_i \cup W_{i+2}]$ to~$y$ and~$y'$; in this case we replace~$y$ or~$y'$ by~$y''$ if~$x$ is adjacent or non-adjacent to~$y''$, respectively.
Since $y' \in W_i^+$ it must have a neighbour $z' \in W_{i+2}$ and then~$z'$ is non-adjacent to~$y$.
Now $G[y',z',y,x,v_{i+4},v_{i+3}]$ is a $P_2+\nobreak P_4$, a contradiction.
Claim~\ref{clm3:WiWi+2-non-simp-Vi+3-Vi+4} follows by symmetry.

\clm{\label{clm3:non-simple-pairs-removable}For $i \in \{1,\ldots,5\}$, every vertex of $V(G) \setminus (V^* \cup U \cup W_i^+ \cup W_{i+2}^-)$ is either complete or anti-complete to~$W_i^+$ and either complete or anti-complete to~$W_{i+2}^-$.}
We may assume that $\{W_i,W_{i+2}\}$ is non-simple, otherwise~$W_i^+$ and~$W_{i+2}^-$ are empty and the claim holds trivially.
By symmetry it is enough to show that every vertex of $V(G) \setminus (V^* \cup U \cup W_i^+ \cup W_{i+2}^-)$ is either complete or anti-complete to~$W_i^+$.
First note that~$v_i$ is complete to~$W_i^+$ and $V(C) \setminus \{v_i\}$ is anti-complete to~$W_i^+$ by definition of~$W_i$.
By Claim~\ref{clm3:V_i-1WiV_i+1-indep}, every vertex of~$V_{i+1}^0$ is anti-complete to~$W_i^+$.
By Claim~\ref{clm3:WiWi+2-non-simp-Vi-Vi+2}, every vertex of~$V_i^0$ is complete to~$W_i^+$ and every vertex of~$V_{i+2}^0$ is anti-complete to~$W_i^+$.
By Claim~\ref{clm3:WiWi+2-non-simp-Vi+3-Vi+4}, every vertex of~$V_{i+3}^0$ and~$V_{i+4}^0$ is either complete or anti-complete to~$W_i^+$.
By Claim~\ref{clm3:UWi-indep}, every vertex of $W_i \setminus W_i^+$ is anti-complete to~$W_i^+$.
By Claim~\ref{clm3:Wi-W_i+1-comp-anti}, every vertex of~$W_{i-1}$ and~$W_{i+1}$ is either complete or anti-complete to~$W_i^+$.
By definition of~$W_i^+$, every vertex of~$W_{i+2} \setminus W_{i+2}^-$ is anti-complete to~$W_i^+$.
Suppose, for contradiction, that a vertex $x \in W_{i-2}$ has a neighbour $y \in W_i^+$.
Since $\{W_i,W_{i+2}\}$ is non-simple, Claim~\ref{clm3:Wi+2-anti-Wi+3-then-Wi-comp-Wi+2Wi+3} implies that~$W_{i-2}$ is not anti-complete to~$W_{i+2}$, so by Claim~\ref{clm3:Wi-W_i+1-comp-anti}, $W_{i-2}$ is complete to~$W_{i+2}$.
Since $y \in W_i^+$ there must be a vertex $z \in V_{i+2}$ that is adjacent to~$y$.
Now $G[x,y,z]$ is a~$K_3$, a contradiction.
Therefore~$W_{i-2}$ is anti-complete to~$W_i^+$.
Claim~\ref{clm3:non-simple-pairs-removable} follows by symmetry.

\clm{\label{clm3:no-large-modules}Every module~$M$ in $G \setminus (U \cup V^* \cup W^*)$ that is an independent set contains at most 512 vertices.}
Recall that~$G$ is prime, so it contains no non-trivial modules.
Let $H= G \setminus (V^* \cup U \cup W^*)$ and suppose, for contradiction, that~$H$ contains a module~$M$ on at least 513 vertices such that~$M$ is an independent set.
By Claim~\ref{clm3:if-two-non-simple-then-share-a-Wi} all but at most two sets of the form~$W_i^+$ are empty and all but at most two sets of the form~$W_i^-$ are empty in~$G$.
By Claim~\ref{clm3:non-simple-pairs-removable}, for all $i \in \{1,\ldots,5\}$, every vertex of~$M$ is either complete or anti-complete to~$W_i^+$ and~$W_i^-$ in~$G$.
By Claim~\ref{clm3:VstarU-seperable}, for all $i \in \{1,\ldots,5\}$, every vertex of~$M$ is either complete or anti-complete to~$V_i^*$ in~$G$ and every vertex of~$M$ is anti-complete to~$U$ in~$G$.
Therefore in~$G$ every vertex of~$M$ has one of at most $2^{(4+5)}=512$ neighbourhoods in $U \cup V^* \cup W^*$.
Choose two vertices $x,y \in M$ that have the same neighbourhood in $U \cup V^* \cup W^*$.
Since~$M$ is a module in~$H$, $x$ and~$y$ are not distinguished by any vertex of $V(H) \setminus M$.
Since~$M$ is an independent set, $x$ and~$y$ are not distinguished by any vertex of $M \setminus \{x,y\}$.
Therefore $\{x,y\}$ is a non-trivial module in~$G$.
This contradiction completes the proof of Claim~\ref{clm3:no-large-modules}.

\clm{\label{clm3:remove-U-Vstar-Wstar}We may assume that $U \cup V^* \cup W^*=\emptyset$.}
By Claim~\ref{clm3:VstarU-seperable}, we may apply at most five bipartite complementations to separate $G[U \cup V^*]$ from the rest of the graph.
Now, by Claim~\ref{clm3:VstarcupU-done}, we can delete at most five vertices from~$G[U \cup V^*]$ to obtain a graph that is either bipartite (in which case it is curious) or $3$-uniform.
Suppose that~$G$ contains a non-simple pair $\{W_i,W_{i+2}\}$ and let $H=G \setminus (V^* \cup U \cup W_i^+ \cup W_{i+2}^-)$.
Then Claim~\ref{clm3:non-simple-pairs-removable} shows that we can remove all edges between $W_i^+ \cup W_{i+2}^-$ and the vertices of~$H$ by applying at most two bipartite complementations.
Furthermore, $G[W_i^+ \cup W_{i+2}^-]$ is a bipartite graph by Claim~\ref{clm3:UWi-indep}, so it is a curious $(K_3,P_2+\nobreak P_4)$-free graph with one part of the $3$-partition empty.
By Claim~\ref{clm3:if-two-non-simple-then-share-a-Wi}, $G$ contains at most two non-simple pairs and if it contains two then by Claim~\ref{clm3:Wi+-Wi--disjoint}, $W_i^+ \cap W_i^- =\emptyset$ for all~$i$.
Hence, Claim~\ref{clm3:non-simple-pairs-removable} implies that by applying at most four bipartite complementations, we can separate~$G[W^*]$ from the rest of $G \setminus (V^* \cup U)$.
We do this by separating the curious graph $G[W_i^+ \cup W_{i+2}^-]$ for each non-simple pair $\{W_i,W_{i+2}\}$ in turn.
We may therefore assume that $U \cup V^* \cup W^*=\emptyset$. 
This completes the proof of Claim~\ref{clm3:remove-U-Vstar-Wstar}.

\bigskip
\noindent
From now on, we assume that $U=\emptyset$ and $\{W_i,W_{i+2}\}$ is simple for all $i \in \{1,\ldots,5\}$.
Note that in the proof of Claim~\ref{clm3:remove-U-Vstar-Wstar} we edit the graph~$G$ in such a way that it may stop being prime.
However, by Claim~\ref{clm3:no-large-modules}, $G$ will still not contain any modules~$M$ that are independent sets on more than 512 vertices, and this property will suffice for the remainder of the proof.

\clm{\label{clm3:2Vi-verts-dominate-Vi+-1}For $i \in \{1,\ldots,5\}$, in every set of 513 vertices in~$V_i$, there are two that together dominate either~$V_{i+1}$ or~$V_{i-1}$.}
Let $X \subseteq V_i$ with $|X|\geq 513$ and note that~$X$ is an independent set by Claim~\ref{clm3:V_i-1WiV_i+1-indep}.
By Claim~\ref{clm3:no-large-modules}, $X$ cannot be a module, so there must be two vertices $x,y \in X$ that are distinguished by a vertex~$z$ outside~$X$.
Without loss of generality assume that~$z$ is adjacent to~$x$, but non-adjacent to~$y$.
By definition of~$V_i$, vertices in the cycle~$C$ cannot distinguish two vertices in the same set~$V_i$, so $z \notin C$.
We will show that $\{x,y\}$ dominates either~$V_{i+1}$ or~$V_{i-1}$.
Suppose, for contradiction, that~$x$ and~$y$ are both non-adjacent to $a \in V_{i-1}$ and both non-adjacent to $b \in V_{i+1}$.
By Claim~\ref{clm3:V_i-1WiV_i+1-indep} vertices in $V_{i-2}\cup V_{i+2} \cup W_{i-1} \cup W_{i+1} \cup (V_i \setminus \{x,y\})$ are anti-complete to $\{x,y\}$.
By symmetry, we may therefore assume that $z \in V_{i+1} \cup W_i \cup W_{i+2}$.
In this case~$z$ is non-adjacent to~$v_{i+1}$, $v_{i-1}$ and~$v_{i-2}$.
Now~$z$ must be adjacent to~$a$, otherwise $G[a,v_{i-2},z,x,v_{i+1},y]$ would be a $P_2+\nobreak P_4$.
Now~$z$ cannot belong to $V_{i+1} \cup W_i$, otherwise $G[a,v_i,z]$ would be a~$K_3$.
Therefore $z \in W_{i+2}$.
Now~$z$ cannot be adjacent to~$b$ otherwise $G[v,v_{i+2},z]$ would be a~$K_3$ and by Claim~\ref{clm3:V_i-1WiV_i+1-indep}, $a$ is non-adjacent to~$b$.
It follows that $G[y,v_{i-1},a,z,v_{i+2},b]$ is a $P_2+\nobreak P_4$.
This contradiction completes the proof of Claim~\ref{clm3:2Vi-verts-dominate-Vi+-1}.

\clm{\label{clm3:Vi-dominates-Vi-1orVi+1}By deleting at most 512 vertices from each set~$V_i$ we may assume that each vertex of~$V_i$ dominates either~$V_{i-1}$ or~$V_{i+1}$.}
Note that~$V_{i-1}$ is anti-complete to~$V_{i+1}$ by Claim~\ref{clm3:V_i-1WiV_i+1-indep}.
For each $x \in V_{i-1}$ and $y\in V_{i+1}$, let~$A^{x,y}$ denote the set of vertices in~$V_i$ that are non-adjacent to both~$x$ and~$y$.
Consider an $x \in V_{i-1}$ and a~$y\in V_{i+1}$ such that~$A^{x,y}$ is non-empty, say $a \in A^{x,y}$.
(Note that if~$A^{x,y}$ is empty for all~$x$ and~$y$ then we are done.)
If a vertex $b \in V_i$ is adjacent to exactly one of~$x$ and~$y$, say~$b$ is adjacent to~$x$, then $G[v_{i+2},y,a,v_{i-1},b,x]$ is a $P_2+\nobreak P_4$, a contradiction.
This contradiction shows that if~$A^{x,y}$ is non-empty then $N(x) \cap V_i=N(y)\cap V_i=V_i \setminus A^{x,y}$.

Now choose~$x$ and~$y$ such that~$|A^{x,y}|$ is maximum.
Suppose, for contradiction, that there is a vertex $b \in V_i \setminus A^{x,y}$ with a non-neighbour $x' \in V_{i-1}$ and a non-neighbour $y' \in V_{i+1}$.
Note that~$b$ must be adjacent to both~$x$ and~$y$ and $b \in A^{x',y'}$.
Furthermore, since~$|A^{x,y}|$ is maximum, there must be a vertex $a \in A^{x,y} \setminus A^{x',y'}$, so~$a$ is adjacent to both~$x'$ and~$y'$.
Now $G[a,x',x,b,y,v_{i+2}]$ is a $P_2+\nobreak P_4$, a contradiction.
It follows that every vertex in $V_i \setminus A^{x,y}$ dominates either~$V_{i-1}$ or~$V_{i+1}$.
By Claim~\ref{clm3:2Vi-verts-dominate-Vi+-1}, $|A^{x,y}| \leq 512$, so we delete the vertices of~$A^{x,y}$.
This completes the proof of Claim~\ref{clm3:Vi-dominates-Vi-1orVi+1}.

\bigskip
\noindent
By Claim~\ref{clm3:Vi-dominates-Vi-1orVi+1}, we may assume that every vertex of~$V_i$ dominates either~$V_{i-1}$ or~$V_{i+1}$.
Therefore, we can partition each set~$V_i$ into three (possibly empty) subsets as follows:

\begin{itemize}
\item $V^0_i$ the set of vertices in~$V_i$ that dominate both~$V_{i-1}$ and~$V_{i+1}$,
\item $V^-_i$ the set of vertices in $V_i\setminus V^0_i$ that dominate~$V_{i-1}$ (and so have non-neighbours in~$V_{i+1}$),
\item $V^+_i$ the set of vertices in $V_i\setminus V^0_i$ that dominate~$V_{i+1}$ (and so have non-neighbours in~$V_{i-1}$).
\end{itemize}
By definition of this partition, if $x\in V_i$ is non-adjacent to $y\in V_{i+1}$, then $x \in V^-_i$ and $y \in V^+_{i+1}$.
Moreover, {\em every} vertex of~$V^-_i$ has a non-neighbour in~$V^+_{i+1}$ and vice versa. 
Thus, the vertices of $V_1\cup\cdots\cup V_5$ are partitioned into 15 subsets (some or all of which may be empty).
Note that for $i \in \{1,\ldots,5\}$ the sets $V_i^0$, $V^-_i$, $V^+_i$, $V_{i+2}^0$, $V^-_{i+2}$ and~$V^+_{i+2}$ are pairwise anti-complete by Claim~\ref{clm3:V_i-1WiV_i+1-indep}.
Furthermore, for $i \in \{1,\ldots,5\}$, $V_i^0$ and~$V^+_i$ are complete to $V_{i+1}^0$, $V^-_{i+1}$ and~$V^+_{i+1}$, and $V^-_{i+1}$ is complete to $V_i^0$ and~$V^-_i$ by definition.
Therefore nearly every pair of these subsets is either complete or anti-complete to each other.
The only possible exceptions are the five disjoint pairs of the form $\{V^-_i,V^+_{i+1}\}$.

\medskip
Recall that for two independent sets of vertices~$X$ and~$Y$ such that $G[X \cup Y]$ is $P_4$-free, the components of $G[X \cup Y]$ are complete bipartite graphs and if $G[X \cup Y]$ contains at most one non-trivial component, we say that the pair $\{X,Y\}$ is simple and otherwise it is non-simple.
Recall that if $\{X,Y\}$ is simple then we can remove all edges between~$X$ and~$Y$ by applying at most one bipartite complementation.
By Claim~\ref{clm3:remove-U-Vstar-Wstar} every pair $\{W_i,W_j\}$ is simple and for the fifteen sets considered in the previous paragraph, the only pairs of them that can be non-simple are the ones of the form $\{V^-_i,V^+_{i+1}\}$.

Next, we will analyse the edges between sets of the form~$W_i$ and sets of the form~$V_j$ (and their subsets $V^0_j$, $V^-_j$ and~$V^+_j$) for $i,j \in \{1,\ldots,5\}$.
We will then use bipartite complementations to partition the graph into induced subgraphs that are curious.
First, recall that~$W_i$ is anti-complete to~$V_{i-1}$ and~$V_{i+1}$ by Claim~\ref{clm3:V_i-1WiV_i+1-indep}.
Also, by Claim~\ref{clm3:x-in-Vi-triv-to-Wi}, $\{W_i,V_i\}$ is simple.
Therefore, the only sets~$V_j$ to which~$W_i$ can have a non-simple connection are~$V_{i+2}$ and~$V_{i+3}$.

\begin{sloppypar}
\clm{\label{clm3:W_iCi+2Vi+3-no-3P1}For $i \in \{1,\ldots,5\}$, if $x\in W_i$, $y\in V_{i+2}$ and $z\in V_{i+3}$ then $G[x,y,z]$ is not a~$3P_1$.}
Indeed, if $x\in W_i$, $y\in V_{i+2}$ and $z\in V_{i+3}$ and $G[x,y,z]$ is a~$3P_1$ then $G[x,v_i,y,v_{i+3},v_{i+2},z]$ is a $P_2+\nobreak P_4$, a contradiction.
This completes the proof of Claim~\ref{clm3:W_iCi+2Vi+3-no-3P1}.
\end{sloppypar}

\clmnonewline{\label{clm3:x-in-Wi-nbr-in-W0+_i+2-then-anti-Vi+3-comp-V-_i+2}For $i \in \{1,\ldots,5\}$:
\begin{enumerate}[(i)]
\item If $x\in W_i$ has a neighbour in $V^0_{i+2}\cup V^+_{i+2}$, then~$x$ is anti-complete to~$V_{i+3}$ and complete to~$V^-_{i+2}$.
\item If $x\in W_i$ has a neighbour in $V^0_{i+3}\cup V^-_{i+3}$, then~$x$ is anti-complete to~$V_{i+2}$ and complete to~$V^+_{i+3}$.
\end{enumerate}
}
\noindent
Suppose $x\in W_i$ has a neighbour $y \in V^0_{i+2}\cup V^+_{i+2}$.
If $z \in V_{i+3}$ then~$y$ is adjacent to~$z$, so~$x$ must be non-adjacent to~$z$, otherwise $G[x,y,z]$ would be a~$K_3$.
Therefore~$x$ is anti-complete to~$V_{i+3}$.
If $y' \in V^-_{i+2}$ then~$y'$ has a non-neighbour $z' \in V_{i+3}$.
Note that~$z'$ is non-adjacent to~$x$.
Now~$x$ must be adjacent to~$y'$, otherwise $G[x,y',z']$ would be a~$3P_1$, contradicting Claim~\ref{clm3:W_iCi+2Vi+3-no-3P1}.
It follows that~$x$ is complete to~$V^-_{i+2}$.
Claim~\ref{clm3:x-in-Wi-nbr-in-W0+_i+2-then-anti-Vi+3-comp-V-_i+2} follows by symmetry.

\bigskip
By Claim~\ref{clm3:x-in-Wi-nbr-in-W0+_i+2-then-anti-Vi+3-comp-V-_i+2} we can partition~$W_i$ into three (possibly empty) subsets as follows:
\begin{itemize}
\item $W^2_i$ the set of vertices in~$W_i$ that have neighbours in $V^0_{i+2}\cup V^+_{i+2}$ (and are therefore anti-complete to~$V_{i+3}$ and complete to~$V^-_{i+2}$),
\item $W^3_i$ the set of vertices in~$W_i$ that have neighbours in $V^0_{i+3}\cup V^-_{i+3}$ (and are therefore anti-complete to~$V_{i+2}$ and complete to~$V^+_{i+3}$) and
\item $W^0_i$ the set of vertices in $W_i\setminus (W^2_i\cup W^3_i)$ (which are therefore anti-complete to $V^0_{i+2}\cup V^+_{i+2}\cup V^0_{i+3}\cup V^-_{i+3}$).
\end{itemize}

Recall that the only sets~$V_j$ to which~$W_i$ can have a non-simple connection are~$V_{i+2}$ and~$V_{i+3}$.
Therefore, by Claim~\ref{clm3:x-in-Wi-nbr-in-W0+_i+2-then-anti-Vi+3-comp-V-_i+2}, for $i,j \in \{1,\ldots,5\}$ if a set in $\{W_i^2, W_i^3, W_i^0\}$ is non-simple to a set in $\{V_j^-, V_j^+, V_j^0\}$ then it must be one of the following pairs:
\begin{itemize}
\item $\{W_i^2,V^0_{i+2}\}$,
\item $\{W_i^2,V^+_{i+2}\}$,
\item $\{W_i^3,V^0_{i+3}\}$,
\item $\{W_i^3,V^-_{i+3}\}$,
\item $\{W_i^0,V^-_{i+2}\}$ or 
\item $\{W_i^0,V^+_{i+3}\}$.
\end{itemize}
Recall among pairs of sets of the form $V_j^-$, $V_j^+$, $V_j^0$, $W_j^2$, $W_j^3$, $W_j^0$ the only possible other non-simple pairs are the ones of the form $\{V^-_i,V^+_{i+1}\}$ (see also \figurename~\ref{fig:non-simple}).

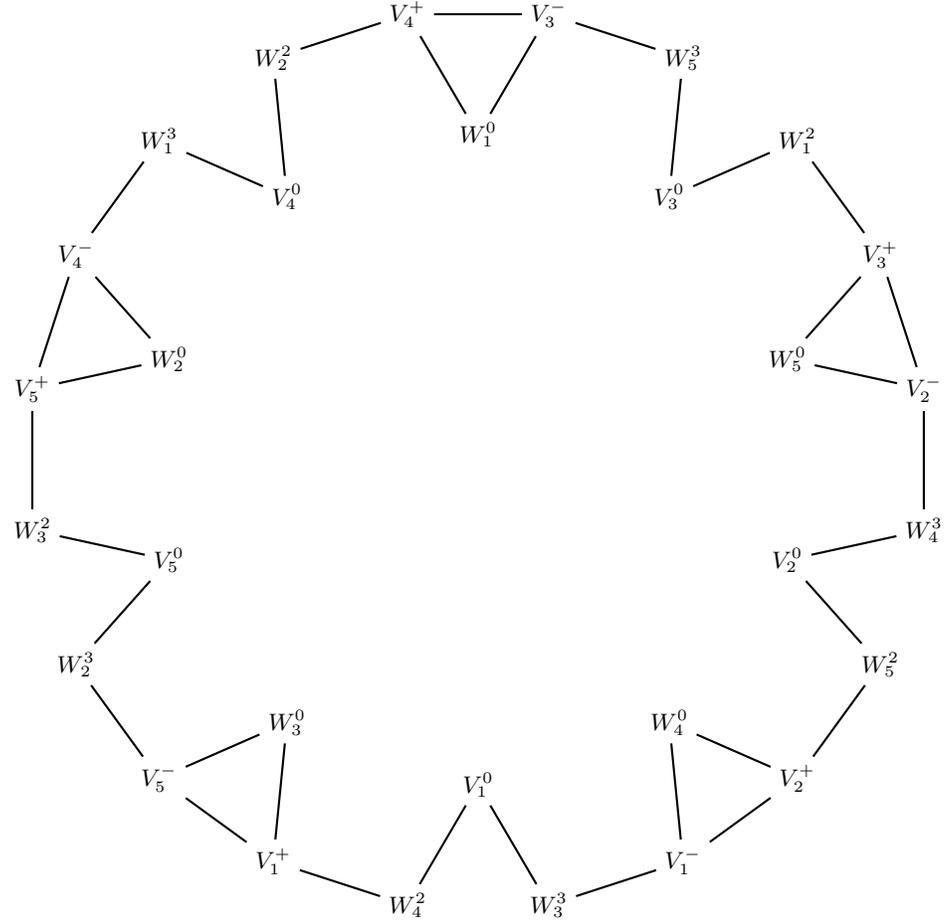
\begin{figure}
\begin{center}
\begin{tikzpicture}[rotate=81]
\draw node (x0) at (0*18:6) {$V_3^-$};
\draw node (x1) at (1*18:6) {$V_4^+$};
\draw node (x2) at (2*18:6) {$W_2^2$};
\draw node (x3) at (3*18:6) {$W_1^3$};
\draw node (x4) at (4*18:6) {$V_4^-$};
\draw node (x5) at (5*18:6) {$V_5^+$};
\draw node (x6) at (6*18:6) {$W_3^2$};
\draw node (x7) at (7*18:6) {$W_2^3$};
\draw node (x8) at (8*18:6) {$V_5^-$};
\draw node (x9) at (9*18:6) {$V_1^+$};
\draw node (x10) at (10*18:6) {$W_4^2$};
\draw node (x11) at (11*18:6) {$W_3^3$};
\draw node (x12) at (12*18:6) {$V_1^-$};
\draw node (x13) at (13*18:6) {$V_2^+$};
\draw node (x14) at (14*18:6) {$W_5^2$};
\draw node (x15) at (15*18:6) {$W_4^3$};
\draw node (x16) at (16*18:6) {$V_2^-$};
\draw node (x17) at (17*18:6) {$V_3^+$};
\draw node (x18) at (18*18:6) {$W_1^2$};
\draw node (x19) at (19*18:6) {$W_5^3$};
\draw [thick] (x19) -- (x0) -- (x1) -- (x2)
              (x3) -- (x4) -- (x5) -- (x6)
              (x7) -- (x8) -- (x9) -- (x10)
              (x11) -- (x12) -- (x13) -- (x14)
              (x15) -- (x16) -- (x17) -- (x18);

\draw node (y0) at  (0*36+9:5-0.67738110376) {$W_1^0$}; 
\draw node (y1) at  (1*36+9:5-0.67738110376) {$V_4^0$};
\draw node (y2) at  (2*36+9:5-0.67738110376) {$W_2^0$};
\draw node (y3) at  (3*36+9:5-0.67738110376) {$V_5^0$};
\draw node (y4) at  (4*36+9:5-0.67738110376) {$W_3^0$};
\draw node (y5) at  (5*36+9:5-0.67738110376) {$V_1^0$};
\draw node (y6) at  (6*36+9:5-0.67738110376) {$W_4^0$};
\draw node (y7) at  (7*36+9:5-0.67738110376) {$V_2^0$};
\draw node (y8) at  (8*36+9:5-0.67738110376) {$W_5^0$};
\draw node (y9) at  (9*36+9:5-0.67738110376) {$V_3^0$};

\draw [thick]
(x0) -- (y0) -- (x1)
(x2) -- (y1) -- (x3)
(x4) -- (y2) -- (x5)
(x6) -- (y3) -- (x7)
(x8) -- (y4) -- (x9)
(x10) -- (y5) -- (x11)
(x12) -- (y6) -- (x13)
(x14) -- (y7) -- (x15)
(x16) -- (y8) -- (x17)
(x18) -- (y9) -- (x19)

;
\end{tikzpicture}
\end{center}
\caption{\label{fig:non-simple}The connections between pairs of sets of the form $V_j^-$, $V_j^+$, $V_j^0$, $W_j^2$, $W_j^3$ and $W_j^0$.
An edge is shown between two such sets if it is possible for this pair of sets to be non-simple.
If an edge is not shown between two sets, then these sets form a simple pair and therefore all edges between them can be removed by using at most one bipartite complementation.}
\end{figure}

Notice that~$W^2_i$ and~$W^3_i$ are disjoint by Claim~\ref{clm3:x-in-Wi-nbr-in-W0+_i+2-then-anti-Vi+3-comp-V-_i+2}.
This partition allows us to distinguish two kinds of induced subgraphs of~$G$.
\begin{enumerate}
\item If for some $i \in \{1,\ldots,5\}$ $W^3_i$ and~$W^2_{i+1}$ are both non-empty then we form a subgraph of the {\em first} kind $H^1_i = G[W^3_i\cup W^2_{i+1}\cup V^0_{i+3}]$. (If $W^3_i$ or~$W^2_{i+1}$ is empty, then we let~$H^1_i$ be the empty graph~$(\emptyset,\emptyset)$.)

\item For $i \in \{1,\ldots,5\}$ we let~$H^2_i$ be the graph induced on~$G$ by $W^0_i \cup V^-_{i+2} \cup V^+_{i+3}$.
If~$H_{i-1}^1$ is empty, but~$W^3_{i-1}$ is non-empty then $W^2_i=\emptyset$.
In this case we also add $W^3_{i-1} \cup V^0_{i+2}$ to~$H^2_i$.
Similarly, if~$H_i^1$ is empty, but~$W^2_{i+1}$ is non-empty then $W^3_i=\emptyset$.
In this case we also add $W^2_{i+1} \cup V^0_{i+3}$ to~$H^2_i$.
We say that~$H^2_i$ is a graph of the {\em second} kind.
\end{enumerate}

In the next two claims, we show how to disconnect both kinds of graph~$H_i^j$ from the rest of the graph.

\clm{\label{clm3:H1_i-seperable}For $i \in \{1,\ldots,5\}$ the graph~$H^1_i$ can be separated from the rest of the graph using finitely many bipartite complementations.}
We will show that for every set $V_j^-$, $V_j^+$, $V_j^0$, $W_j^2$, $W_j^3$, $W_j^0$ that is not in $\{W^3_i,\allowbreak W^2_{i+1},V^0_{i+3}\}$, this set is simple to each of the three sets in $\{W^3_i,W^2_{i+1},V^0_{i+3}\}$, in which case we can separate~$H^1_i$ by applying at most one bipartite complementation between every pair of such sets.
Taking into account Claim~\ref{clm3:x-in-Wi-nbr-in-W0+_i+2-then-anti-Vi+3-comp-V-_i+2}, recall that~$W^3_i$ is simple to every set except~$V^0_{i+3}$ and~$V^-_{i+3}$, $W^2_{i+1}$ is simple to every set except~$V^0_{i+3}$ and~$V^+_{i+3}$ and~$V^0_{i+3}$ is simple to every set except~$W^3_i$ and~$W^2_{i+1}$.
Thus, it suffices to show that~$W^3_i$ is simple to~$V^-_{i+3}$ and~$W^2_{i+1}$ is simple to~$V^+_{i+3}$.
By symmetry, we need only show that~$W^2_{i+1}$ is simple to~$V^+_{i+3}$.
If~$W^2_{i+1}$ is anti-complete to~$V^+_{i+3}$ then we are done.
Otherwise, choose a vertex $x\in W^2_{i+1}$ that has a neighbour $y\in V^+_{i+3}$.
Note that~$W^3_i$ is non-empty, by assumption (otherwise~$H^1_i$ is empty and we are done).
By Claim~\ref{clm3:x-in-Wi-nbr-in-W0+_i+2-then-anti-Vi+3-comp-V-_i+2}, $V^+_{i+3}$ is complete to~$W^3_i$.
Therefore~$x$ must be anti-complete to~$W_i^3$, otherwise for $z \in W_i^3$, $G[x,y,z]$ would be a~$K_3$, a contradiction.
We will show that~$x$ is complete to~$V^+_{i+3}$.
Suppose, for contradiction, that~$x$ has a non-neighbour $y' \in V^+_{i+3}$.
By definition of~$V^+_{i+3}$ and~$V^-_{i+2}$, $y'$ must have a non-neighbour $z' \in V^-_{i+2}$.
Let $u \in W_i^3$ and note that~$u$ is non-adjacent to~$z'$ and adjacent to~$y'$ by Claim~\ref{clm3:x-in-Wi-nbr-in-W0+_i+2-then-anti-Vi+3-comp-V-_i+2}.
Then $G[u,y',x,v_{i+1},z',v_{i+3}]$ is a $P_2+\nobreak P_4$.
This contradiction shows that if $x \in W^2_{i+1}$ has a neighbour in~$V^+_{i+3}$ then~$x$ is complete to~$V^+_{i+3}$.
Therefore, $\{W^2_{i+1},V^+_{i+3}\}$ is simple.
Similarly, $\{W^3_i,V^-_{i+3}\}$ is simple.
Applying bipartite complementations between $W^3_i$\&$\{v_i\}$, $W^2_{i+1}$\&$\{v_{i+1}\}$ and $V^0_{i+3}$\&$\{v_{i+2},v_{i+4}\}$ removes all edges between the vertices of~$H^1_i$ and the vertices of the cycle~$C$.
Every set $V_j^-$, $V_j^+$, $V_j^0$, $W_j^2$, $W_j^3$, $W_j^0$ for $j \in \{1,\ldots,5\}$ that is not in $\{W^3_i,W^2_{i+1},V^0_{i+3}\}$ is simple to every set in $\{W^3_i,W^2_{i+1},V^0_{i+3}\}$, so applying at most one bipartite complementation between each such pair of sets, we can remove all remaining edges from~$H_i^1$ to the rest of the graph.
This completes the proof of Claim~\ref{clm3:H1_i-seperable}.

\clm{\label{clm3:H2_i-seperable}For $i \in \{1,\ldots,5\}$ the graph~$H^2_i$ can be separated from the rest of the graph using finitely many bipartite complementations.}
The graph~$H^2_i$ contains $W^0_i \cup V^-_{i+2} \cup V^+_{i+3}$.
Taking into account Claim~\ref{clm3:x-in-Wi-nbr-in-W0+_i+2-then-anti-Vi+3-comp-V-_i+2}, recall that the set~$W^0_i$ can only be non-simple to~$V^-_{i+2}$ or~$V^+_{i+3}$, the set~$V^-_{i+2}$ can only be non-simple to~$V^+_{i+3}$, $W_i^0$ and~$W_{i-1}^3$, and the set~$V^+_{i+3}$ can only be non-simple to~$V^-_{i+2}$, $W_i^0$ and~$W_{i+1}^2$.
Thus we only need to concern ourselves with the case where the pair $\{V^-_{i+2},W_{i-1}^3\}$ or the pair $\{V^+_{i+3},W_{i+1}^2\}$ is not simple.
By symmetry, it is enough to consider the second of these cases.
If~$W_{i+1}^2$ is in the graph~$H_i^1$ then we are done by Claim~\ref{clm3:H1_i-seperable}.
Therefore we may assume that~$W_{i+1}^2$ is non-empty, but not contained in the graph~$H_i^1$, so $W^3_i=\emptyset$.
Thus we add $W^2_{i+1} \cup V^0_{i+3}$ to~$H^2_i$.
Now~$W^2_{i+1}$ can only be non-simple to~$V^0_{i+3}$ and~$V^+_{i+3}$, both of which are in~$H^2_i$ and $V^0_{i+3}$ can only be non-simple to~$W^2_{i+1}$, which is in~$H^2_i$ and~$W^3_i$, which is empty.
By symmetry, it follows that we can use a bounded number of bipartite complementations to disconnect~$H^2_i$ from all sets of the form $V_j^-$, $V_j^+$, $V_j^0$, $W_j^2$, $W_j^3$, $W_j^0$ outside~$H_i^2$.
Finally, applying a bipartite complementation between each of the sets in~$H^2_i$ and their neighbourhood in the cycle~$C$ disconnects~$H^2_i$ from the remainder of the graph.
This completes the proof of Claim~\ref{clm3:H2_i-seperable}.

\medskip
In the next two claims, we show graphs of the first kind are curious and graphs of the second kind can be partitioned into two curious induced subgraphs by applying at most two bipartite complementations.

\clm{\label{clm3:H1_i-curious}For $i \in \{1,\ldots,5\}$, the graph~$H^1_i$ is bipartite and therefore curious.}
The sets~$W^3_i$, $W^2_{i+1}$ and~$V^0_{i+3}$ are independent by Claim~\ref{clm3:V_i-1WiV_i+1-indep}.
By Claim~\ref{clm3:Wi-W_i+1-comp-anti}, $W^3_i$ is complete or anti-complete to~$W^2_{i+1}$.
If~$W^3_i$ is anti-complete to~$W^2_{i+1}$, then $W^3_i\cup W^2_{i+1}$ and~$V^0_{i+3}$ are independent sets, so~$H^1_i$ is bipartite.
If~$W^3_i$ is complete to~$W^2_{i+1}$, then every vertex of~$V^0_{i+3}$ has a neighbour in at most one of~$W^3_i$ and~$W^2_{i+1}$ (since~$G$ is $K_3$-free), and hence~$H^1_i$ is again bipartite.
Since bipartite graphs are curious graphs where one of the partition classes is empty, this completes the proof of Claim~\ref{clm3:H1_i-curious}.

\clm{\label{clm3:H2_i-curious}For $i \in \{1,\ldots,5\}$, we can apply at most two bipartite complementation operations to~$H^2_i$ to obtain the disjoint union of at most two curious induced subgraphs of~$H^2_i$.}
The sets~$W^0_i$, $V^-_{i+2}$ and~$V^+_{i+3}$ are independent by Claim~\ref{clm3:V_i-1WiV_i+1-indep}.
If~$H^2_i$ only contains the vertices in $W^0_i \cup V^-_{i+2} \cup V^+_{i+3}$ then it is curious by Claim~\ref{clm3:W_iCi+2Vi+3-no-3P1}.
We may therefore assume that $W^3_{i-1}$ and/or $W^2_{i+1}$ belong to~$H^2_i$.
Note that $X=(W^3_{i-1} \cup V^0_{i+3} \cup V^+_{i+3})\cap V(H^2_i)$ and $Y=(W^2_{i+1} \cup V^0_{i+2} \cup V^-_{i+2})\cap V(H^2_i)$ are independent sets by Claim~\ref{clm3:V_i-1WiV_i+1-indep}.
Therefore if~$W^0_i=\emptyset$ then~$H^2_i$ is bipartite, so it is a curious graph, where one part of the $3$-partition is empty.
If~$W^0_i$ is not empty, then it is either complete or anti-complete to $W^3_{i-1}\cup W^2_{i+1}$ by Claim~\ref{clm3:Wi-W_i-1W_i+1-comp-anti}.

If~$W^0_i$ is complete to $W^3_{i-1}\cup W^2_{i+1}$ then taking~$X$ and~$Y$ as above, and setting $Z=W^0_i$, we find that~$H^2_i$ is a curious graph, with partition~$X,Y,Z$.
Indeed, suppose for contradiction that there are vertices $x \in X$, $y \in Y$ and $z \in Z$ with $G[x,y,z]$ forming a~$3P_1$.
Since $z\in W^0_i$, it follows that $x \notin W^3_{i-1}$ and $y \notin W^2_{i+1}$.
Therefore we have $x \in V_{i+2}$, $y \in V_{i+3}$ and $z \in W_i$ such that~$G[x,y,z]$ is a~$3P_1$, which contradicts Claim~\ref{clm3:W_iCi+2Vi+3-no-3P1}.
This contradiction shows that if~$W^0_i$ is complete to $W^3_{i-1}\cup W^2_{i+1}$ then~$H^2_i$ is indeed a curious graph.

Finally, suppose that~$W^0_i$ is anti-complete to $W^3_{i-1}\cup W^2_{i+1}$.
By Claim~\ref{clm3:remove-U-Vstar-Wstar}, we may assume that~$W^3_{i-1}$ is simple to~$W^2_{i+1}$, so $G[W^3_{i-1}\cup W^2_{i+1}]$ contains at most one non-trivial component, and if such a component exists, it must be a complete bipartite graph by Claim~\ref{clm3:Wi-W_i+1-P4-free}.
Let~$W^{3*}_{i-1}$ (resp.~$W^{2*}_{i+1}$) be the set of vertices in~$W^3_{i-1}$ (resp.~$W^2_{i+1}$) that have neighbours in~$W^2_{i+1}$ (resp.~$W^3_{i-1}$).
Since $G[W^{3*}_{i-1} \cup W^{2*}_{i+1}]$ is a complete bipartite graph, it follows that~$W^{3*}_{i-1}$ is complete to~$W^{2*}_{i+1}$.

We will show that if~$W^{3*}_{i-1}$ and~$W^{2*}_{i+1}$ are both in~$H_i^2$, then we can disconnect $H_i^2[W^{3*}_{i-1} \cup W^{2*}_{i+1}]$ from the rest of~$H_i^2$ by applying at most two bipartite complementations.
Indeed, suppose $x \in W^{3*}_{i-1}$ has a neighbour $y \in W^{2*}_{i+1}$ and let~$u \in W_i^0$.
By definition of~$W^3_{i-1}$ there is a vertex $z \in V_{i+2}^0 \cup V_{i+2}^-$ that is adjacent to~$x$.
Then~$y$ must be non-adjacent to~$z$ by Claim~\ref{clm3:V_i-1WiV_i+1-indep}.
If~$z$ is non-adjacent to~$u$ then $G[v_i,u,y,x,z,v_{i-2}]$ is a $P_2+\nobreak P_4$, a contradiction.
Therefore~$z$ is adjacent to~$u$.
By definition of~$W_i^0$, it follows that $z \notin V_{i+2}^0$ and so $z \in V_{i+2}^-$.
We conclude that~$W^{3*}_{i-1}$ is anti-complete to~$V_{i+2}^0$, and by symmetry, $W^{2*}_{i+1}$ is anti-complete to~$V_{i+3}^0$.
Now suppose that there is a vertex $z' \in V_{i+2}^-$ such that~$x$ is non-adjacent to~$z'$.
If~$z'$ is adjacent to~$u$ then $G[x,v_{i-1},u,z',v_{i+1},v_{i+2}]$ is a $P_2+\nobreak P_4$.
Therefore~$z'$ must be non-adjacent to~$u$.
We conclude that every vertex in~$V_{i+2}^-$ is adjacent to~$x$ if and only if it is adjacent to~$u$.
Since~$x$ was chosen from~$W^{3*}_{i-1}$ arbitrarily, this means that every vertex in~$V_{i+2}^-$ is either complete or anti-complete to~$W^{3*}_{i-1}$.
Therefore, applying at most one bipartite complementation, we can remove all edges from~$V_{i+2}^-$ to~$W^{3*}_{i-1}$
Recall that~$V_{i+2}^0$ is anti-complete to~$W^{3*}_{i-1}$.
By assumption, $W^{3*}_{i-1}$ is anti-complete to~$W_i^0$, complete to~$W^{2*}_{i+1}$ and anti-complete to $W^2_{i+1}\setminus W^{2*}_{i+1}$.
By Claim~\ref{clm3:V_i-1WiV_i+1-indep}, $W^{3*}_{i-1}$ is anti-complete to~$V_{i+3}$.
Therefore, applying at most one bipartite complementation, we can remove all edges between vertices in~$W^{3*}_{i-1}$ and vertices in $V(H^2_i) \setminus (W^{3*}_{i-1} \cup W^{2*}_{i+1})$.
By symmetry, applying at most one bipartite complementation, we can remove all edges between vertices in~$W^{2*}_{i+1}$ and vertices in $V(H^2_i) \setminus (W^{3*}_{i-1} \cup W^{2*}_{i+1})$.
This disconnects $G[W^{3*}_{i-1} \cup W^{2*}_{i+1}]$ from the rest of~$H^2_i$.
Since $G[W^{3*}_{i-1} \cup W^{2*}_{i+1}]$ is a bipartite graph, it is curious.
We may therefore assume that~$W^3_{i-1}$ is anti-complete to~$W^2_{i+1}$.

We set $X'=(V^0_{i+3} \cup V^+_{i+3})\cap V(H^2_i)$, $Y'=(V^0_{i+2} \cup V^-_{i+2})\cap V(H^2_i)$ and $Z'=(W_i^0 \cup W^3_{i-1}\cup W^2_{i+1})\cap V(H^2_i)$ and note that~$X'$, $Y'$ and~$Z'$ are independent sets.
Now suppose, for contradiction, that~$H^2_i$ is not curious with respect to the partition $(X',Y',Z')$.
Then there must be vertices $x\in X'$, $y\in Y'$ and $z\in Z'$ such that $G[x,y,z]$ is a~$3P_1$.
Since~$x$ and~$y$ are non-adjacent, it follows that $x \in V^+_{i+3}$ and $y \in V^-_{i+2}$.
By Claim~\ref{clm3:W_iCi+2Vi+3-no-3P1}, $z \notin W^0_i$.
By symmetry, we may assume $z \in W^3_{i-1}$.
Let~$w$ be an arbitrary vertex of~$W_i^0$.
Then~$w$ must be non-adjacent to~$y$, otherwise $G[v_{i-1},z,w,y,v_{i+1},v_{i+2}]$ is a $P_2+\nobreak P_4$.
By Claim~\ref{clm3:W_iCi+2Vi+3-no-3P1}, $w$ must therefore be adjacent to~$x$.
Then $G[v_{i+1},y,w,x,v_{i-1},z]$ is a $P_2+\nobreak P_4$, a contradiction.
It follows that~$H^2_i$ is indeed a curious graph with respect to the partition $(X',Y',Z')$.
This completes the proof of Claim~\ref{clm3:H2_i-curious}.

\medskip
\noindent
Applying Claims~\ref{clm3:H1_i-seperable}, \ref{clm3:H2_i-seperable}, \ref{clm3:H1_i-curious} and~\ref{clm3:H2_i-curious}, we remove all vertices of~$V_i$ and~$W_i$ except maybe for some sets~$V_i^0$.
By definition, each set~$V_i^0$ is complete to $V_{i-1}^0 \cup V_{i+1}^0$ and by Claim~\ref{clm3:V_i-1WiV_i+1-indep} it is anti-complete to~$V_{i-2}^0 \cup V_{i+2}^0$.
We delete the five vertices of the original cycle~$C$ and then apply a bipartite complementation between any pair of non-empty sets~$V_i^0$ and~$V_{i+1}^0$.
This will yield an independent set, which is a curious $(K_3,P_2+\nobreak P_4)$-free graph where two parts of the partition are empty.
It remains only to count the number of operations applied and the number of obtained curious and $3$-uniform graphs.

\bigskip
\noindent
As explained in the proof of Claim~\ref{clm3:remove-U-Vstar-Wstar}, by Claim~\ref{clm3:VstarU-seperable}, we apply at most five bipartite complementations to separate $G[U \cup V^*]$ from the rest of the graph.
We then delete at most five vertices from $G[U \cup V^*]$ to obtain a graph that is either curious or $3$-uniform.
Next, as also explained in the proof of Claim~\ref{clm3:remove-U-Vstar-Wstar}, we can separate~$G[W^*]$ from the rest of $G \setminus (V^* \cup U)$ by applying $2\times 2=4$ bipartite complementations and obtain at most two curious graphs.
Next, in Claim~\ref{clm3:Vi-dominates-Vi-1orVi+1}, for each $i \in \{1,\ldots,5\}$, we delete at most 512 vertices.
Deleting the five vertices of the original cycle~$C$ yields five more vertex deletions.
Recall that for $i \in \{1,\ldots,5\}$, we partitioned~$W_i$ into $W_i^0$, $W_i^2$ and~$W_i^3$ and partitioned~$V_i$ into $V_i^0$, $V_i^-$ and~$V_i^+$, yielding 30 subsets of vertices altogether.
In Claims~\ref{clm3:H1_i-seperable} and~\ref{clm3:H2_i-seperable}, we apply bipartite complementations between certain pairs of these subsets to separate the graphs of both kinds.
Thus, in these claims we apply at most $\binom{30}{2}=435$ bipartite complementations.
For each $i \in \{1,\ldots,5\}$, Claim~\ref{clm3:H1_i-curious} says that~$H_i^1$ is a curious graph.
In Claim~\ref{clm3:H2_i-curious}, for each $i \in \{1,\ldots,5\}$ we apply at most two bipartite complementations and obtain at most two curious graphs.
Finally, we apply at most five bipartite complementations between non-empty sets~$V_i^0$ and~$V_{i+1}^0$, yielding at most one curious graph.
This gives a total of $5+4+435+(5\times 2)+5=459$ bipartite complementations, $5+(5\times 512)+5=2570$ vertex deletions yielding at most $1+2+(5\times 1)+(5 \times 2)+1=19$ curious graphs and at most one $3$-uniform graph.
This completes the proof.\qed
\end{proof}

We can now prove our main result.
Recall that it was already known~\cite{DDP15} that the class of $(K_3,P_1+\nobreak P_5)$-free graphs has bounded clique-width, but that it was not known that this class is well-quasi-ordered.

\begin{theorem}\label{thm:main}
For $H \in \{P_2+\nobreak P_4,P_1+\nobreak P_5\}$ the class of $(K_3,H)$-free graphs is well-quasi-ordered by the labelled induced subgraph relation and has bounded clique-width.
\end{theorem}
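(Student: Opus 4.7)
The plan is to reduce to prime connected graphs via Lemmas~\ref{lem:prime-wqo} and~\ref{lem:prime-cw}, and then to split on whether a given graph~$G$ contains an induced~$C_5$. The crucial observation that unifies the two choices of~$H$ is that both $P_1+\nobreak P_5$ and $P_2+\nobreak P_4$ are induced subgraphs of~$S_{1,2,3}$; the first inclusion is immediate, and for the second one may take as $P_4$ the $P_1$-arm, the centre and the $P_2$-arm of~$S_{1,2,3}$, which is disjoint from the $P_2$ formed by the last two vertices of the $P_3$-arm. Consequently every $(K_3,H)$-free graph under consideration is also $S_{1,2,3}$-free.

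For the $C_5$-free subcase, a prime graph on at least three vertices has no pair of false twins, so Lemma~\ref{lem:noC5-partial} applies and~$G$ is either bipartite or a cycle. In the bipartite case~$G$ is $H$-free bipartite, so Theorem~\ref{t-main0} gives well-quasi-orderability and Theorem~\ref{t-bipartite} gives bounded clique-width (both using $H\ssi S_{1,2,3}$). In the cycle case only finitely many cycles are $(K_3,H)$-free (at most $C_4,\ldots,C_7$ in either instance), so the claim is immediate.

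For the $C_5$-containing subcase, I would apply Lemma~\ref{lem:K3P5+P1C5} when $H=P_1+\nobreak P_5$ and Lemma~\ref{lem:K3P4+P2C5} when $H=P_2+\nobreak P_4$; each of these, via a universally bounded number of vertex deletions and bipartite complementations, turns~$G$ into a disjoint union of boundedly many curious graphs in the class and, in the $P_2+\nobreak P_4$ case, at most one $3$-uniform graph. Theorem~\ref{thm:curious}, applied using that the bipartite graphs in each $(K_3,H)$-free class are well-quasi-ordered and have bounded clique-width (by the same Theorems~\ref{t-main0} and~\ref{t-bipartite} as above), transfers both properties to the curious components, while Lemmas~\ref{lem:uniform-wqo} and~\ref{lem:uniform-cw} handle the possible $3$-uniform graph. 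A bounded disjoint union preserves both properties, and Facts~\ref{fact:del-vert}--\ref{fact:bip} together with Lemma~\ref{lem:lwqo-operations} allow us to reverse the vertex deletions and bipartite complementations without losing either property, establishing both conclusions for~$G$. The main obstacle I expect is really just the bookkeeping that threads both properties through the same decomposition; the framework of Sections~\ref{sec:partitioning} and~\ref{sec:curious} is deliberately set up so that Theorem~\ref{thm:curious} together with Lemmas~\ref{lem:K3P5+P1C5} and~\ref{lem:K3P4+P2C5} deliver exactly the hypotheses needed for both properties simultaneously.
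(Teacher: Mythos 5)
Your overall strategy coincides with the paper's own proof: reduce to prime (hence connected) graphs via Lemmas~\ref{lem:prime-cw} and~\ref{lem:prime-wqo}, observe $P_1+\nobreak P_5,P_2+\nobreak P_4\ssi S_{1,2,3}$, split on the presence of an induced $C_5$, use Lemma~\ref{lem:noC5-partial} in the $C_5$-free case (your treatment of $C_7$ through the ``cycle'' outcome is equivalent to, and in fact slightly cleaner than, the paper's), and otherwise apply Lemma~\ref{lem:K3P5+P1C5} or Lemma~\ref{lem:K3P4+P2C5} followed by Theorem~\ref{thm:curious}, Lemmas~\ref{lem:uniform-wqo} and~\ref{lem:uniform-cw}, Facts~\ref{fact:del-vert} and~\ref{fact:bip} and Lemma~\ref{lem:lwqo-operations}.

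There is, however, one genuine gap: you invoke Theorem~\ref{t-main0} to supply well-quasi-orderability of the relevant bipartite graphs, but Theorem~\ref{t-main0} is a statement about the \emph{unlabelled} induced subgraph relation, whereas every place you use it requires the \emph{labelled} relation. The theorem you are proving asserts labelled well-quasi-orderability, the reductions in Lemma~\ref{lem:prime-wqo} and Lemma~\ref{lem:lwqo-operations} are reductions for the labelled relation, and the hypothesis of Theorem~\ref{thm:curious} (which rests on Lemma~\ref{lem:slice-wqo}) is precisely that the bipartite graphs in the class are well-quasi-ordered by the labelled induced subgraph relation. Since unlabelled well-quasi-orderability does not in general imply the labelled version (the paper recalls the Brignall--Engen--Vatter counterexample), the chain of implications breaks both in your $C_5$-free bipartite case and in your application of Theorem~\ref{thm:curious}. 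The repair is exactly what the paper does: since $H\ssi P_7$ and $H\ssi S_{1,2,3}$, the $H$-free bipartite graphs form a subclass of the $(P_7,S_{1,2,3})$-free bipartite graphs, which are well-quasi-ordered by the labelled induced subgraph relation by Lemma~\ref{lem:P7S123-free-bip-wqo}; the clique-width half of your argument, via Theorem~\ref{t-bipartite}, is fine as stated.
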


\begin{proof}
Let $H \in \{P_2+\nobreak P_4,P_1+\nobreak P_5\}$.
By Lemmas~\ref{lem:prime-cw} and~\ref{lem:prime-wqo}, we need only consider prime graphs in this class.
Recall that a prime graph on at least three vertices cannot contain two vertices that are false twins, otherwise these two vertices would form a non-trivial module.
Therefore, by Lemma~\ref{lem:noC5-partial}, and since $H \subseteq_i S_{1,2,3}$, the classes of prime $(K_3,H)$-free graphs containing an induced~$C_7$ is precisely the graph~$C_7$.
We may therefore restrict ourselves to~$C_7$-free graphs.

Since the graphs in the class are $H$-free, it follows they contain no induced cycles on eight or more vertices.
We may therefore restrict ourselves to prime $(K_3,C_7,H)$-free graphs that either contain an induced~$C_5$ or are bipartite.
By Lemmas~\ref{lem:K3P5+P1C5} or~\ref{lem:K3P4+P2C5}, given any prime $(K_3,C_7,H)$-free that contains an induced~$C_5$, we can apply at most a constant number of vertex deletions and bipartite complementation operations to obtain a graph that is a disjoint union of $(K_3,H)$-free curious graphs and (in the $H=P_2+\nobreak P_4$ case) $3$-uniform graphs.
By Lemmas~\ref{lem:lwqo-operations}, \ref{lem:uniform-wqo} and~\ref{lem:uniform-cw}, Facts~\ref{fact:del-vert} and~\ref{fact:bip}, and Theorem~\ref{thm:curious}, it is sufficient to only consider bipartite $(K_3,C_7,H)$-free graphs.
These graphs are $H$-free bipartite graphs.
Furthermore, they form a subclass of the class of $(P_7,S_{1,2,3})$-free bipartite graphs, since $H \subseteq_i P_7, S_{1,2,3}$. 
The class of $(P_7,S_{1,2,3})$-free bipartite graphs is well-quasi-ordered by the labelled induced subgraph relation by Lemma~\ref{lem:P7S123-free-bip-wqo} and has bounded clique-width by Theorem~\ref{t-bipartite}.
This completes the proof.\qed
\end{proof}

\section{State-of-the-Art Summaries for Bigenic Graph Classes}\label{s-con}

The class of $(\overline{P_1+P_4},P_2+\nobreak P_3)$-free graphs is the only bigenic graph class left for which Conjecture~\ref{c-f} still needs to be verified; see~\cite{DLP16} for details of this claim (which can also be deduced from Theorems~\ref{t-wqowqo} 
and~\ref{thm:classification2} below).

\begin{oproblem}\label{o-con}
Is Conjecture~\ref{c-f} true for the class of $(H_1,H_2)$-free graphs~when
$H_1=\overline{P_1+P_4}$ and $H_2=P_2+\nobreak P_3$?
\end{oproblem}
The class of $(\overline{P_1+P_4},P_2+\nobreak P_3)$-free graphs is (up to an equivalence relation\footnote{Given four graphs $H_1,H_2,H_3,H_4$, the classes of $(H_1,H_2)$-free graphs and $(H_3,H_4)$-free graphs are said to be equivalent if the unordered pair $H_3,H_4$ can be obtained from the unordered pair $H_1,H_2$ by some combination of the operations (i) complementing both graphs in the pair and (ii) if one of the graphs in the pair 
is~$3P_1$, replacing it with $P_1+\nobreak P_3$ or vice versa.
If two classes are equivalent, then one of them is well-quasi-ordered by the induced subgraph relation if and only if the other one is~\cite{KL11}.
Similarly, if two classes are equivalent, then one of them has bounded clique-width if and only if the other one does~\cite{DP15}.}) one of the six remaining bigenic graph classes for which well-quasi-orderability is still open and one of the six bigenic graph classes for which boundedness of clique-width is still open.
We refer to~\cite{DLP16} for details of the first claim and to~\cite{BDJLPZ} for details of the second claim.
To make our paper self-contained we recall two theorems from these papers, which sum up our current knowledge on bigenic graph classes (including the new results of this paper).
We also include the lists of corresponding open cases below. 

Here is the state-of-the-art summary for well-quasi-orderability for bigenic graph classes.

\begin{theorem}\label{t-wqowqo}
Let~${\cal G}$ be a class of graphs defined by two forbidden induced subgraphs. Then:
\begin{enumerate}
\item ${\cal G}$ is well-quasi-ordered by the {\em labelled} induced subgraph relation if it is equivalent
to a class of $(H_1,H_2)$-free graphs such that one of the following holds:
\begin{enumerate}[(i)]
\item $H_1$ or $H_2 \ssi P_4$;
\item $H_1=sP_1$ and $H_2=K_t$ for some $s,t\geq 1$;
\item $H_1 \ssi P_1+P_3$ and $\overline{H_2} \ssi P_1+\nobreak P_5, P_2+\nobreak P_4$ or~$P_6$;
\item $H_1 \ssi 2P_1+P_2$ and $\overline{H_2} \ssi P_2+P_3$ or~$P_5$.\\[-9pt]
\end{enumerate}
\newpage
\item ${\cal G}$ is not well-quasi-ordered by the induced subgraph relation if it is equivalent to a class of $(H_1,H_2)$-free graphs such that one of the following holds: 
\begin{enumerate}[(i)]
\item neither~$H_1$ nor~$H_2$ is a linear forest;
\item $H_1 \si 3P_1$ and $\overline{H_2} \si 3P_1+\nobreak P_2, 3P_2$ or~$2P_3$;
\item $H_1 \si 2P_2$ and $\overline{H_2} \si 4P_1$ or~$2P_2$;
\item $H_1 \si 2P_1+\nobreak P_2$ and $\overline{H_2} \si 4P_1, P_2+\nobreak P_4$ or~$P_6$;
\item $H_1 \si P_1+\nobreak P_4$ and $\overline{H_2} \si P_1+\nobreak 2P_2$.
\end{enumerate}
\end{enumerate}
\end{theorem}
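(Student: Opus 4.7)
The statement is a comprehensive dichotomy that tabulates the current state of knowledge on well-quasi-orderability of bigenic classes. The plan is a structured case analysis, using the equivalence relation (complement both graphs in the pair, and swap $3P_1 \leftrightarrow P_1+P_3$ when applicable, as per the footnote) to cut down the enumeration of $(H_1,H_2)$ pairs to a manageable list. Throughout, Lemma~\ref{lem:prime-wqo} reduces each class to its prime (or connected) subgraphs, and Lemma~\ref{lem:lwqo-operations} supplies the operational toolkit (bipartite complementation, subgraph complementation, vertex deletion) used to transport labelled well-quasi-orderability between closely related classes.

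For Part~1, I would address each sub-case in turn. Sub-case~(i) reduces to the fact that $H_1 \ssi P_4$ forces the class into the cographs ($P_4$-free graphs), which are labelled-WQO by a classical result cited in~\cite{Da90}. Sub-case~(ii) is immediate from Ramsey's theorem~\cite{Ra30}: $(sP_1, K_t)$-free classes are finite and therefore trivially WQO. Sub-case~(iii) is the heart of the new contribution: up to the equivalence operations, its entries reduce to $(K_3, H')$-free graphs with $H' \ssi P_1+P_5$, $P_2+P_4$, or $P_6$. The $P_6$ case is due to~\cite{AL15}, while the two previously open cases $H' = P_1+P_5$ and $H' = P_2+P_4$ are exactly what Theorem~\ref{thm:main} of this paper provides. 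Sub-case~(iv) is then covered by pre-existing labelled WQO results for the relevant bigenic pairs, catalogued in the bookkeeping of~\cite{DLP16}.

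For Part~2, the plan is to exhibit, for each listed pair, an infinite antichain of graphs in the class under the unlabelled induced subgraph relation. For sub-case~(i), the infinite family $\{C_n : n \geq n_0\}$ of cycles is a well-known antichain, and it is contained in every class defined by two non-linear-forest forbidden subgraphs (since no sufficiently long cycle contains a non-linear-forest as an induced subgraph). Sub-cases~(ii)--(v) invoke the explicit antichain constructions already in the literature for $H$-free bipartite graphs with $H \si 3P_1+P_2$, $3P_2$, or $2P_3$, due to~\cite{KL11}, \cite{Di92}, and~\cite{KV11} respectively, together with closely related constructions reached by complementing the pair. In every case one must verify that the cited antichain lies inside the specified class, which is a routine check against the forbidden pair.

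The main obstacle is twofold. First, the positive side of Part~1(iii) depends crucially on the partitioning-into-slices machinery of Section~\ref{sec:partitioning} applied to curious graphs, as carried out in the proof of Theorem~\ref{thm:main}; without that input the theorem would be incomplete. Second, there is substantial combinatorial bookkeeping required to argue that every bigenic pair $(H_1,H_2)$ falls (up to equivalence) into either a Part~1 entry or a Part~2 entry, with the sole exception of the still-open pair $(\overline{P_1+P_4}, P_2+P_3)$ mentioned in Section~\ref{s-con}. This exhaustive enumeration is carried out in detail in~\cite{DLP16}, to which I would defer rather than repeat it; all that remains is to insert the two new cases from Theorem~\ref{thm:main} into the positive list of Part~1(iii), and to verify that the cited negative constructions give infinite antichains already under the unlabelled induced subgraph relation (and hence a fortiori under the labelled relation).
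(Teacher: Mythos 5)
Your proposal matches the paper's treatment: the paper gives no standalone proof of this theorem, explicitly recalling it as a state-of-the-art summary from~\cite{DLP16} (which contains the exhaustive case bookkeeping up to the equivalence relation and the previously known positive and negative results), with the only new input being Theorem~\ref{thm:main} for the $(K_3,P_2+\nobreak P_4)$ and $(K_3,P_1+\nobreak P_5)$ cases that complete Part~1(iii). The only loose point is your blanket attribution of the Part~2 antichains to complemented bipartite constructions — this is accurate for 2(ii) but the antichains for 2(iii)--(v) are different constructions catalogued in~\cite{DLP16,KL11} — which is immaterial since those cases are all pre-existing and you correctly defer to that literature.
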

Theorem~\ref{t-wqowqo} does not cover six cases, which are all still open.

\begin{oproblem}\label{o-wqo}
Is the class of $(H_1,H_2)$-free graphs well-quasi-ordered 
by the induced subgraph relation 
when:
\begin{enumerate}[(i)]
\item $H_1=2P_1+\nobreak P_2$ and $\overline{H_2} \in \{P_1+\nobreak 2P_2, P_1+\nobreak P_4\}$;
\item $H_1=P_1+\nobreak P_4$ and $\overline{H_2} \in \{P_1+\nobreak P_4, 2P_2, P_2+\nobreak P_3, P_5\}$.
\end{enumerate}
\end{oproblem}
Here is the state-of-the-art summary of the boundedness of clique-width for bigenic graph classes.

\begin{theorem}\label{thm:classification2}
Let~${\cal G}$ be a class of graphs defined by two forbidden induced subgraphs.
Then:
\begin{enumerate}
\item ${\cal G}$ has bounded clique-width if it is equivalent
to a class of $(H_1,H_2)$-free graphs such that one of the following holds:\\
\begin{enumerate}[(i)]
\item \label{thm:classification2:bdd:P4} $H_1$ or $H_2 \ssi P_4$;
\item \label{thm:classification2:bdd:ramsey} $H_1=sP_1$ and $H_2=K_t$ for some $s,t\geq 1$;
\item \label{thm:classification2:bdd:P_1+P_3} $H_1 \ssi P_1+\nobreak P_3$ and $\overline{H_2} \ssi K_{1,3}+\nobreak 3P_1,\; K_{1,3}+\nobreak P_2,\;\allowbreak P_1+\nobreak P_2+\nobreak P_3,\;\allowbreak P_1+\nobreak P_5,\;\allowbreak P_1+\nobreak S_{1,1,2},\;\allowbreak P_2+\nobreak P_4,\;\allowbreak P_6,\; \allowbreak S_{1,1,3}$ or~$S_{1,2,2}$;
\item \label{thm:classification2:bdd:2P_1+P_2} $H_1 \ssi 2P_1+\nobreak P_2$ and $\overline{H_2}\ssi P_1+\nobreak 2P_2,\; 3P_1+\nobreak P_2$ or~$P_2+\nobreak P_3$;
\item \label{thm:classification2:bdd:P_1+P_4} $H_1 \subseteq_i P_1+\nobreak P_4$ and $\overline{H_2} \ssi P_1+\nobreak P_4$ or~$P_5$;
\item \label{thm:classification2:bdd:K_13} $H_1,\overline{H_2} \ssi K_{1,3}$;
\item \label{thm:classification2:bdd:2P1_P3} $H_1,\overline{H_2} \ssi 2P_1+\nobreak P_3$.\\
\end{enumerate}
\item ${\cal G}$ has unbounded clique-width if it is equivalent to a class of $(H_1,H_2)$-free graphs such that one of the following holds:
\begin{enumerate}[(i)]
\item \label{thm:classification2:unbdd:not-in-S} $H_1\not\in {\cal S}$ and $H_2 \not \in {\cal S}$;
\item \label{thm:classification2:unbdd:not-in-co-S} $\overline{H_1}\notin {\cal S}$ and $\overline{H_2} \not \in {\cal S}$;
\item \label{thm:classification2:unbdd:K_13or2P_2} $H_1 \si K_{1,3}$ or~$2P_2$ and $\overline{H_2} \si 4P_1$ or~$2P_2$;
\item \label{thm:classification2:unbdd:2P_1+P_2} $H_1 \si 2P_1+\nobreak P_2$ and $\overline{H_2} \si K_{1,3},\; 5P_1,\; P_2+\nobreak P_4$ or~$P_6$;
\item \label{thm:classification2:unbdd:3P_1} $H_1 \si 3P_1$ and $\overline{H_2} \si 2P_1+\nobreak 2P_2,\; 2P_1+\nobreak P_4,\; 4P_1+\nobreak P_2,\; 3P_2$ or~$2P_3$;
\item \label{thm:classification2:unbdd:4P_1} $H_1 \si 4P_1$ and $\overline{H_2} \si P_1 +\nobreak P_4$ or~$3P_1+\nobreak P_2$.
\end{enumerate}
\end{enumerate}
\end{theorem}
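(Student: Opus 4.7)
The plan is to establish Theorem~\ref{thm:classification2} as a case-by-case compilation: for each pair of forbidden induced subgraphs $(H_1,H_2)$, one either locates an appropriate reference confirming boundedness of clique-width or exhibits (or cites) a construction producing graphs of unbounded clique-width within the class. A first observation is that it suffices to work modulo the equivalence relation defined immediately above the theorem (complementing both members of the pair, and swapping~$3P_1$ with~$P_1+\nobreak P_3$), since this equivalence preserves boundedness of clique-width. After normalization, the argument splits into verifying the two lists separately.

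For the positive direction, items~(i) and~(ii) are immediate: forbidding an induced~$P_4$ yields cographs, which have clique-width at most~$2$, and $(sP_1,K_t)$-free graphs form a finite class by Ramsey's theorem. Items~(iv)--(vii) each follow from bigenic analyses already available in the literature, so the plan is to match each listed pair with the corresponding existing reference. The crucial item is~(iii): it is now complete thanks to this paper, since under the equivalence relation the $(K_3,P_2+\nobreak P_4)$-free case corresponds exactly to the subcase $H_1=P_1+\nobreak P_3$, $\overline{H_2}=P_2+\nobreak P_4$, which is resolved by Theorem~\ref{thm:main} via the curious-graph decomposition of Section~\ref{sec:partitioning} combined with Lemma~\ref{lem:K3P4+P2C5}. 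The sibling subcase with $\overline{H_2}=P_1+\nobreak P_5$ was handled in~\cite{DDP15}, and the remaining choices of $\overline{H_2}$ listed in~(iii) all have earlier bigenic proofs that can be cited directly.

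For the negative direction, items~(i) and~(ii) rest on the general fact that any class avoiding two graphs, neither of which (respectively neither of whose complement) lies in~${\cal S}$, must contain walls or large grid-like subgraphs of unbounded clique-width; the plan is to invoke the standard wall/grid arguments from the bigenic literature. Items~(iii)--(vi) are each witnessed by explicit families of bipartite or near-bipartite graphs whose unboundedness is documented in earlier papers, so the task is to match each listed pair with the appropriate construction.

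The main obstacle is not any single case but completeness of the enumeration: one must check that every bigenic pair, modulo equivalence, falls into either the bounded list or the unbounded list, with the sole exception of $(\overline{P_1+\nobreak P_4}, P_2+\nobreak P_3)$ identified in Open Problem~\ref{o-con}. This bookkeeping is extensive and is carried out in detail in the compilation~\cite{BDJLPZ}; the essential new content contributed by the present paper is the boundedness result for $(K_3,P_2+\nobreak P_4)$-free graphs from Theorem~\ref{thm:main}, which plugs the previously missing entry in case~(iii).
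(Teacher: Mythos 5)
Your approach is essentially the paper's own: Theorem~\ref{thm:classification2} is not proved afresh in this paper but recalled as a compilation from~\cite{BDJLPZ}, updated with the paper's new contribution, namely boundedness of clique-width for $(K_3,P_2+\nobreak P_4)$-free graphs (Theorem~\ref{thm:main}), which under the stated equivalence settles the entry $H_1\subseteq_i P_1+\nobreak P_3$, $\overline{H_2}=P_2+\nobreak P_4$ of case 1(iii), exactly as you describe, with all other listed cases cited from earlier work (e.g.\ the $P_1+\nobreak P_5$ sibling from~\cite{DDP15}) and the unbounded cases from known subdivided-wall and bipartite constructions. One small slip in a side remark: the clique-width classification leaves six open cases (those of Open Problem~\ref{oprob:twographs}), not solely $(\overline{P_1+P_4},P_2+\nobreak P_3)$, which is the unique remaining case only for Conjecture~\ref{c-f}; since Theorem~\ref{thm:classification2} asserts no completeness of the enumeration, this does not affect the correctness of your argument.
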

Theorem~\ref{thm:classification2} does not cover six cases, which are all still open.

\begin{oproblem}\label{oprob:twographs}
Does the class of $(H_1,H_2)$-free graphs have bounded or unbounded clique-width when:
\begin{enumerate}[(i)]
\item \label{oprob:twographs:3P_1} $H_1=3P_1$ and $\overline{H_2} \in \{P_1+\nobreak S_{1,1,3},
\allowbreak S_{1,2,3}\}$;
\item\label{oprob:twographs:2P_1+P_2} $H_1=2P_1+\nobreak P_2$ and $\overline{H_2} \in \{P_1+\nobreak P_2+\nobreak P_3,\allowbreak P_1+\nobreak P_5\}$;
\item \label{oprob:twographs:P_1+P_4} $H_1=P_1+\nobreak P_4$ and $\overline{H_2} \in \{P_1+\nobreak 2P_2,\allowbreak P_2+\nobreak P_3\}$.
\end{enumerate}
\end{oproblem}

\bibliography{mybib}
\end{document}